\newtheorem{thrm}{Theorem}[section]
\newtheorem{cor}[thrm]{Corollary}
\newtheorem{lem}[thrm]{Lemma}
\newtheorem{prop}[thrm]{Proposition}
\theoremstyle{definition}
\newtheorem{defn}[thrm]{Definition}
\crefname{thrm}{Theorem}{Theorems}
\crefname{lem}{Lemma}{Lemmas}
\crefname{cor}{Corollary}{Corollaries}
\crefname{prop}{Proposition}{Propositions}
\crefname{defn}{Definition}{Definitions}
\crefname{exm}{Example}{Examples}
\crefname{rem}{Remark}{Remarks}
\crefname{section}{Section}{Sections}
\crefname{equation}{\unskip}{\unskip}
\crefname{enumi}{\unskip}{\unskip}
\DeclareMathOperator{\Ann}{Ann}
\newcommand{\af}{\alpha}
\newcommand{\bt}{\beta}
\newcommand{\gm}{\gamma}
\newcommand{\dl}{\delta}
\newcommand{\ve}{\varepsilon}
\newcommand{\Dl}{\Delta}
\newcommand{\vf}{\varphi}
\newcommand{\B}{\mathcal{B}}
\newcommand{\C}{\mathbb{C}}
\newcommand{\Z}{\mathbb{Z}}
\renewcommand{\S}{\mathcal{S}}
\newcommand{\sst}{\subseteq}
\newcommand{\id}{\mathrm{id}}
\newcommand{\gen}[1]{\langle #1 \rangle}
\begin{document}

	\noindent{\Large  
		Transposed Poisson structures on Block Lie algebras and superalgebras}\footnote{
		The first  author is supported by RSF  22-11-00081;  
		the second author  is supported by  CNPq     404649/2018-1 and 	302980/2019-9, 
		FCT   UIDB/MAT/00212/2020 and UIDP/MAT/00212/2020.}

	\
	
	{\bf
		Ivan Kaygorodov$^{a,b,c}$ \& Mykola Khrypchenko$^{d}$  \\

		\medskip
	}
	
	{\tiny

		$^{a}$ Centro de Matemática e Aplicações, Universidade da Beira Interior,  Portugal
		
		\smallskip

$^{b}$ Saint Petersburg  University, Russia
		\smallskip

$^{c}$ Moscow Center for Fundamental and Applied Mathematics,      Russia
		\smallskip

		$^{d}$ Departamento de Matem\'atica, Universidade Federal de Santa Catarina,     Brazil

		\
		
		\smallskip
		
		\medskip
		
		E-mail addresses:

		\smallskip

		Ivan Kaygorodov (kaygorodov.ivan@gmail.com)

		\smallskip
		
		Mykola Khrypchenko  
		(nskhripchenko@gmail.com)
		
	}

	\medskip

	\ 
	
	\noindent {\bf Abstract:} {\it 	
We describe transposed Poisson algebra structures on  Block Lie algebras $\B(q)$ and Block Lie superalgebras $\S(q)$, where $q$ is an arbitrary complex number. Specifically, we show that the transposed Poisson structures on $\B(q)$ are trivial whenever $q\not\in\Z$, and for each $q\in\Z$ there is only one (up to an isomorphism) non-trivial transposed Poisson structure on $\B(q)$. The superalgebra $\S(q)$ admits only trivial transposed Poisson superalgebra structures for $q\ne 0$ and two non-isomorphic non-trivial transposed Poisson superalgebra structures for $q=0$. As a consequence, new Lie algebras and superalgebras that admit non-trivial ${\rm Hom}$-Lie algebra structures are found. 
}

	\
	
	\noindent {\bf Keywords}: 
	{\it 	Transposed Poisson algebra, Lie algebra, $\delta$-derivation,  ${\rm Hom}$-Lie algebra.
}

	\noindent {\bf MSC2020}: primary 17A30; secondary 17B40, 17B61, 17B63.  
	
	\tableofcontents
	
	\section*{Introduction}\label{intro}
The origin of Poisson algebras lies in the Poisson geometry of the 1970s, and since then these algebras have shown their importance in several areas of mathematics and physics, such as Poisson manifolds, algebraic geometry, operads, quantization theory, quantum groups, and classical and quantum mechanics. The study of all possible Poisson algebra structures with fixed Lie or associative part is a popular topic in the theory of Poisson algebras~\cite{jawo,said2,YYZ07,kk21}.
Recently, Bai, Bai, Guo, and Wu~\cite{bai20} have introduced a dual notion of the Poisson algebra, called \textit{transposed Poisson algebra}, by exchanging the roles of the two binary operations in the Leibniz rule defining the Poisson algebra. 
They have shown that a transposed Poisson algebra defined this way not only shares common properties of a Poisson algebra, including the closedness under tensor products and the Koszul self-duality as an operad, but also admits a rich class of identities. More significantly, a transposed Poisson algebra naturally arises from a Novikov-Poisson algebra by taking the commutator Lie algebra of the Novikov algebra.
Thanks to \cite{bfk22}, 
any unital transposed Poisson algebra is
a particular case of a ``contact bracket'' algebra~\cite{zel} 
and a quasi-Poisson algebra~\cite{billig}.
Later, in a recent paper by Ferreira, Kaygorodov, and  Lopatkin
a relation between $\frac{1}{2}$-derivations of Lie algebras and 
transposed Poisson algebras have been established \cite{FKL}.
These ideas were used to describe all transposed Poisson structures 
on  Witt and Virasoro algebras in  \cite{FKL};
on   twisted Heisenberg-Virasoro,   Schrodinger-Virasoro  and  
  extended Schrodinger-Virasoro algebras in \cite{yh21};
on   oscillator algebras in  \cite{bfk22}.
Fehlberg Júnior and Kaygorodov gave a way to construct new transposed Poisson algebras by the Kantor product of their multiplications in \cite{FK21}.
The ${\rm Hom}$- and ${\rm BiHom}$-versions of transposed Poisson algebras have been considered in \cite{hom, bihom}.	 
A list of actual open questions on transposed Poisson algebras is given in \cite{bfk22}.

Block Lie algebras is a class of simple infinite dimensional Lie algebras introduced by Block in 1958 \cite{block58}.
Since then, several generalizations of these algebras have appeared \cite{oz99, xu99,dz96}. Block Lie algebras, their generalizations and related algebras are still under an active investigation \cite{cgz14,suyue15,suxia18,xia19,suxia20,suxixu12}.
Thus, Đoković and Zhao introduced a generalization of Block algebras and  described their derivations, isomorphisms and second cohomology in \cite{dz96}. 
Xia,  You and  Zhou~\cite{xyz12} 
defined, for a fixed complex number $q$, the Block   algebra $\B(q)$ as a Lie algebra with a basis $\{L_{m,i} \mid m, i \in \Z\}$ and the following multiplication table 
$$
 [L_{m,i}, L_{n,j}] = (n(i + q) - m(j + q))L_{m+n,i+j},\ i, j, m, n \in \Z.
$$
They proved that, for distinct integers $q_1$ and $q_2$, the algebras $\B(q_1)$ and $\B(q_2)$ are non-isomorphic, and calculated the automorphism group and the algebra of derivations of $\B(q)$ for an arbitrary $q\in\C.$ 
They also showed that the second scalar cohomology group of $\B(q)$ is one-dimensional and found the unique non-trivial central extension of $\B(q)$.
Liu, Guo, Xiangqian and  Zhao~\cite{lgz18} described all biderivations
(and, as an application, all commuting maps) on Block algebras $\B(q)$.
$U(h)$-free modules over the Block algebra $\B(q)$ have been studied by Guo, Wang and Liu  in \cite{gwl21}.
 Xia,  You and  Zhou introduced a superanalog of Block Lie algebras in \cite{xia16}.

In the present paper, we give a full description of transposed Poisson algebra structures on
Block  Lie algebras $\B(q)$ defined in \cite{xyz12} and transposed Poisson superalgebra structures on Block Lie superalgebras $\S(q)$ defined in \cite{xia16}. More precisely, we prove in \cref{thalg} that transposed Poisson algebra structures on $\B(q)$ are  trivial for $q\not\in\Z$, and there is only non-trivial such structure for $q\in \Z$. We also show in \cref{tp-on-S(q)} that $\S(q)$ admits only trivial transposed Poisson superalgebra structures for $q\ne 0$, but in the case $q=0$ there are two non-trivial such structures.

	\section{Preliminaries}\label{prelim}
	
	\subsection{Transposed Poisson algebras}

All the algebras below will be over $\mathbb C$ and all the linear maps will be $\mathbb C$-linear, unless otherwise stated.

\begin{defn}\label{tpa}
Let ${\mathfrak L}$ be a vector space equipped with two nonzero bilinear operations $\cdot$ and $[\cdot,\cdot].$
The triple $({\mathfrak L},\cdot,[\cdot,\cdot])$ is called a \textit{transposed Poisson algebra} if $({\mathfrak L},\cdot)$ is a commutative associative algebra and
$({\mathfrak L},[\cdot,\cdot])$ is a Lie algebra that satisfies the following compatibility condition
\begin{align}\label{Trans-Leibniz}
2z\cdot [x,y]=[z\cdot x,y]+[x,z\cdot y].
\end{align}
\end{defn}

Transposed Poisson algebras were first introduced in a paper by Bai, Bai, Guo, and Wu \cite{bai20}.

\begin{defn}\label{tp-structures}
    Let $({\mathfrak L},[\cdot,\cdot])$ be a Lie algebra. A \textit{transposed Poisson algebra structure} on $({\mathfrak L},[\cdot,\cdot])$ is a commutative associative multiplication $\cdot$ on $\mathfrak L$ which makes $({\mathfrak L},\cdot,[\cdot,\cdot])$ a transposed Poisson algebra.
\end{defn}

\begin{defn}\label{12der}
Let $({\mathfrak L}, [\cdot,\cdot])$ be an algebra and $\varphi:\mathfrak L\to\mathfrak L$ a linear map.
Then $\varphi$ is a \textit{$\frac{1}{2}$-derivation} if it satisfies
\begin{align}\label{vf(xy)=half(vf(x)y+xvf(y))}
\varphi([x,y])= \frac{1}{2} \left([\varphi(x),y]+ [x, \varphi(y)] \right).
\end{align}
\end{defn}
Observe that $\frac{1}{2}$-derivations are a particular case of $\delta$-derivations introduced by Filippov in \cite{fil1}
(see also \cite{k12,z10} and references therein).

\cref{tpa,12der} immediately imply the following key Lemma.
\begin{lem}\label{glavlem}
Let $({\mathfrak L},\cdot,[\cdot,\cdot])$ be a transposed Poisson algebra 
and $z\in{\mathfrak L}.$
Then the left multiplication $L_z$ of $({\mathfrak L},\cdot)$ is a $\frac{1}{2}$-derivation of $({\mathfrak L}, [\cdot,\cdot]).$
\end{lem}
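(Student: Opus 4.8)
The plan is to unwind the two relevant definitions and observe that the claim reduces directly to the compatibility condition \eqref{Trans-Leibniz}. By definition, the left multiplication operator is the linear map $L_z\colon\mathfrak L\to\mathfrak L$ given by $L_z(x)=z\cdot x$ for all $x\in\mathfrak L$, and it is indeed linear because $\cdot$ is bilinear. To verify that $L_z$ is a $\frac12$-derivation of the Lie algebra $(\mathfrak L,[\cdot,\cdot])$, I would substitute $\varphi=L_z$ into the defining identity \eqref{vf(xy)=half(vf(x)y+xvf(y))}; this amounts to checking that
\begin{align*}
z\cdot [x,y]=\frac12\bigl([z\cdot x,y]+[x,z\cdot y]\bigr)
\end{align*}
holds for all $x,y\in\mathfrak L$.

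First I would fix an arbitrary triple $x,y,z\in\mathfrak L$ and apply the transposed Leibniz rule \eqref{Trans-Leibniz}, which yields $2\,z\cdot [x,y]=[z\cdot x,y]+[x,z\cdot y]$. Dividing both sides by $2$ produces precisely the displayed identity, so $L_z$ satisfies \eqref{vf(xy)=half(vf(x)y+xvf(y))} with $\varphi=L_z$. Since $z$ was arbitrary, every left multiplication operator of $(\mathfrak L,\cdot)$ is a $\frac12$-derivation of $(\mathfrak L,[\cdot,\cdot])$, which is the assertion of the Lemma.

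There is essentially no obstacle here: the statement is a direct reformulation of the compatibility axiom, and the only thing to make explicit is that the transposed Leibniz rule is exactly the $\frac12$-derivation condition with the ``derivation slot'' filled by multiplication by a fixed element $z$. The one point worth recording for later use is that the commutativity of $\cdot$ makes left and right multiplication coincide, so the single identity \eqref{Trans-Leibniz} already suffices and no symmetric variant needs to be checked separately. This Lemma is the mechanism that will let us transfer the classification problem for transposed Poisson structures into the computation of $\frac12$-derivations of the Lie algebras $\B(q)$ and $\S(q)$.
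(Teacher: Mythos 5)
Your proof is correct and matches the paper, which simply observes that the lemma follows immediately from \cref{tpa,12der}: the transposed Leibniz rule \cref{Trans-Leibniz} divided by $2$ is exactly the $\frac12$-derivation identity \cref{vf(xy)=half(vf(x)y+xvf(y))} for $\varphi=L_z$. No further comment is needed.
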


The basic example of a $\frac{1}{2}$-derivation is the multiplication by a field element.
Such $\frac{1}{2}$-derivations will be called \textit{trivial}.

\begin{thrm}\label{princth}
Let ${\mathfrak L}$ be a Lie algebra without non-trivial $\frac{1}{2}$-derivations.
Then all transposed Poisson algebra structures on ${\mathfrak L}$ are trivial.
\end{thrm}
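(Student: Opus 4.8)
The plan is to use \cref{glavlem} to convert the associative product into $\frac12$-derivations and then invoke the hypothesis to force these to be scalar. Let $\cdot$ be a transposed Poisson algebra structure on $\mathfrak L$, so that $(\mathfrak L,\cdot,[\cdot,\cdot])$ is a transposed Poisson algebra. Fixing $z\in\mathfrak L$, I would consider the left multiplication $L_z\colon x\mapsto z\cdot x$. By \cref{glavlem} this $L_z$ is a $\frac12$-derivation of $(\mathfrak L,[\cdot,\cdot])$, and since $\mathfrak L$ has no non-trivial $\frac12$-derivations, $L_z$ must act as a scalar. Writing $L_z=\lambda(z)\,\id$ with $\lambda(z)\in\mathbb C$ (well defined as $\id\neq 0$) gives
\[
z\cdot x=\lambda(z)\,x\qquad\text{for all }x\in\mathfrak L,
\]
and, as $L_z$ depends linearly on $z$, the assignment $\lambda\colon\mathfrak L\to\mathbb C$ is a linear functional.

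Next I would exploit commutativity of $\cdot$. Comparing $z\cdot x=\lambda(z)\,x$ with $x\cdot z=\lambda(x)\,z$ yields $\lambda(z)\,x=\lambda(x)\,z$ for all $x,z\in\mathfrak L$. For any fixed $z$ I would pick an $x$ linearly independent from $z$ (possible because $\mathfrak L$ is not one-dimensional); the equality $\lambda(z)\,x=\lambda(x)\,z$ then forces $\lambda(z)=0$. Hence $\lambda\equiv 0$, so $z\cdot x=0$ for all $x,z$, which is exactly the statement that the transposed Poisson structure $\cdot$ is trivial.

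The substantive step is the last one: \cref{glavlem} and the hypothesis only show that $\cdot$ acts through scalars, and it is commutativity of the associative product---combined with $\dim\mathfrak L>1$---that rules out a nonzero $\lambda$. The dimension restriction is genuinely needed, since on a one-dimensional Lie algebra every linear map is already trivial while $z_0\cdot z_0=z_0$ defines a non-trivial structure; this degenerate case is harmless for the Block algebras $\B(q)$ and superalgebras $\S(q)$ treated below, which are infinite-dimensional. I also note that associativity of $\cdot$ need not be used separately: it follows automatically from $z\cdot x=\lambda(z)\,x$, so the argument rests entirely on \cref{glavlem} and commutativity.
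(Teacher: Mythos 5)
Your proof is correct and is exactly the intended argument: the paper states \cref{princth} without proof as an immediate consequence of \cref{glavlem} (the result goes back to \cite{FKL}), namely that each left multiplication $L_z$ is a $\frac12$-derivation, hence a scalar $\lambda(z)\,\id$ by hypothesis, and commutativity of $\cdot$ then forces $\lambda\equiv 0$. Your side remark about the one-dimensional degenerate case is a fair observation, though it is harmless here both because the algebras treated in the paper are infinite-dimensional and because \cref{tpa} requires the bracket to be nonzero.
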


Let us recall the definition of ${\rm Hom}$-structures on Lie algebras.
\begin{defn}
Let $({\mathfrak L}, [\cdot,\cdot])$ be a Lie algebra and $\varphi$ be a linear map.
Then $({\mathfrak L}, [\cdot,\cdot], \varphi)$ is a ${\rm Hom}$-Lie structure on $({\mathfrak L}, [\cdot,\cdot])$ if 
\[
[\varphi(x),[y,z]]+[\varphi(y),[z,y]]+[\varphi(z),[x,y]]=0.
\]
\end{defn}

\subsection{Transposed Poisson superalgebras}
One naturally defines a transposed Poisson superalgebra as a superization of the notion of a transposed Poisson algebra.

\begin{defn}
Let ${\mathfrak L}={\mathfrak L}_0 \oplus {\mathfrak L}_1$ be a $\mathbb{Z}_2$-graded vector space 
equipped with two nonzero bilinear super-operations $\cdot$ and $[\cdot,\cdot].$
The triple $({\mathfrak L},\cdot,[\cdot,\cdot])$ is called a \textit{transposed Poisson superalgebra} if 
$({\mathfrak L},\cdot)$ is a supercommutative associative superalgebra and
$({\mathfrak L},[\cdot,\cdot])$ is a Lie superalgebra that satisfies the following compatibility condition
\begin{align}\label{super-trans-leibniz}
2z\cdot [x,y]=[z\cdot x,y]+ (-1)^{|x||z|}[x,z\cdot y], \ x,y,z \in {\mathfrak L}_0 \cup {\mathfrak L}_1.
\end{align}
\end{defn}
	
\begin{defn}\label{tp-superstructures}
    Let $({\mathfrak L},[\cdot,\cdot])$ be a Lie superalgebra. A \textit{transposed Poisson superalgebra structure} on $({\mathfrak L},[\cdot,\cdot])$ is a supercommutative associative multiplication $\cdot$ on $\mathfrak L$ which makes $({\mathfrak L},\cdot,[\cdot,\cdot])$ a transposed Poisson superalgebra.
\end{defn}

\begin{defn}\label{12superder}
Let $({\mathfrak L}, [\cdot,\cdot])$ be a superalgebra and $\varphi$ a homogeneous linear map $\mathfrak L\to\mathfrak L$.
Then $\varphi$ is called a \textit{$\frac{1}{2}$-superderivation} if it satisfies
\begin{align*}
\varphi([x,y])= \frac{1}{2} \left([\varphi(x),y]+ (-1)^{|\varphi| |x|} [x, \varphi(y)] \right), \ 
 \ x,y \in {\mathfrak L}_0 \cup {\mathfrak L}_1.
\end{align*}
\end{defn}

\begin{lem}\label{glavsuperlem}
Let $({\mathfrak L},\cdot,[\cdot,\cdot])$ be a transposed Poisson superalgebra and $z\in{\mathfrak L}_0 \cup {\mathfrak L}_1.$
Then the left multiplication $L_z$ of $({\mathfrak L},\cdot)$ is a $\frac{1}{2}$-superderivation of $({\mathfrak L}, [\cdot,\cdot])$ and $|L_z|=|z|$.
\end{lem}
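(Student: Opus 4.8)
The plan is to mirror the argument behind the key \cref{glavlem} for the non-super case, simply reading the defining identity of a $\frac12$-superderivation directly off the super compatibility condition \eqref{super-trans-leibniz}. There are two things to check: that $L_z$ is homogeneous of the claimed degree, and that it satisfies the $\frac12$-superderivation identity of \cref{12superder}.

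First I would settle the parity. Since $\cdot$ is a homogeneous (degree-zero) super-operation, for homogeneous $z$ and homogeneous $x$ the product $z\cdot x$ lies in ${\mathfrak L}_{|z|+|x|}$. Hence $L_z$ maps ${\mathfrak L}_i$ into ${\mathfrak L}_{i+|z|}$ for each $i$, so $L_z$ is a homogeneous linear map with $|L_z|=|z|$, as required; in particular the notion of $\frac12$-superderivation applies to it.

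Next I would verify the identity. Fix homogeneous $x,y\in{\mathfrak L}_0\cup{\mathfrak L}_1$ and rewrite \eqref{super-trans-leibniz} (dividing by $2$) as
\begin{align*}
z\cdot[x,y]=\frac12\left([z\cdot x,y]+(-1)^{|x||z|}[x,z\cdot y]\right).
\end{align*}
Replacing $z\cdot[x,y]$ by $L_z([x,y])$ and $z\cdot x$, $z\cdot y$ by $L_z(x)$, $L_z(y)$, and using $|L_z|=|z|$ so that $(-1)^{|x||z|}=(-1)^{|L_z||x|}$, this becomes precisely
\begin{align*}
L_z([x,y])=\frac12\left([L_z(x),y]+(-1)^{|L_z||x|}[x,L_z(y)]\right),
\end{align*}
which is the defining relation of a $\frac12$-superderivation in \cref{12superder}.

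I do not expect any genuine obstacle here: the statement is an immediate superization of \cref{glavlem}, and the whole content is that the super compatibility condition \eqref{super-trans-leibniz} is, after dividing by $2$, literally the $\frac12$-superderivation identity for $\varphi=L_z$. The only point requiring care is the sign bookkeeping — matching the factor $(-1)^{|x||z|}$ coming from the Leibniz-type condition against the factor $(-1)^{|\varphi||x|}$ in \cref{12superder} — and this matches exactly because $|\varphi|=|L_z|=|z|$.
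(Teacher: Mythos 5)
Your proof is correct and follows exactly the route the paper intends: the paper states this lemma without proof, treating it (like its non-super counterpart \cref{glavlem}) as an immediate consequence of \cref{super-trans-leibniz} divided by $2$, which is precisely what you spell out, including the sign check $(-1)^{|x||z|}=(-1)^{|L_z||x|}$ via $|L_z|=|z|$.
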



Let $\cdot$ be a transposed Poisson (super)algebra structure on a Lie (super)algebra $({\mathfrak L}, [\cdot,\cdot])$. Then any automorphism $\phi$ of $({\mathfrak L}, [\cdot,\cdot])$ induces another transposed Poisson (super)algebra structure $*$ on $({\mathfrak L}, [\cdot,\cdot])$ given by
\begin{align*}
    x*y=\phi(\phi^{-1}(x)\cdot\phi^{-1}(x)),\ \ x,y\in{\mathfrak L}.
\end{align*}
Clearly, $\phi$ is an isomorphism of transposed Poisson (super)algebras $({\mathfrak L},\cdot,[\cdot,\cdot])$ and $({\mathfrak L},*,[\cdot,\cdot])$. 

	\section{Transposed Poisson structures on Block Lie algebras}

	\begin{defn}
		Let $q$ be a fixed complex number. The \textit{Block Lie algebra} $\B(q)$ is the complex Lie algebra with a basis $\{L_{m,i} \mid m, i \in \Z\}$, where
		\begin{align}\label{[L_mi_L_nj]=(n(i+q)-m(j+q))L_m+n_i+j}
			[L_{m,i}, L_{n,j}] = (n(i + q) - m(j + q))L_{m+n,i+j}
		\end{align}
		for all $i, j, m, n \in \Z$.
	\end{defn}
	
	Observe that $\B(q)=\bigoplus_{m,i\in\Z}\B(q)_{m,i}$ is a $\Z\times\Z$-grading, where $\B(q)_{m,i}=\C L_{m,i}$.
	\subsection{$\frac 12$-derivations of $\B(q)$}
	
	Let $\vf$ be a linear map $\B(q)\to\B(q)$. Then the $\Z\times\Z$-grading of $\B(q)$ induces the decomposition
	\begin{align*}
		\vf=\sum_{r,s\in\Z}\vf_{r,s},
	\end{align*}
	where $\vf_{r,s}$ is a linear map $\B(q)\to\B(q)$ such that $\vf_{r,s}(\B(q)_{m,i})\sst \B(q)_{m+r,i+s}$ for all $m,i\in\Z$. Observe that $\vf$ is a $\frac 12$-derivation of $\B(q)$ if and only if $\vf_{r,s}$ is a $\frac 12$-derivation of $\B(q)$ for all $r,s\in\Z$. We write 
	\begin{align}\label{Dl_rs(L_mi)=d_rs(m_i)L_m+r_i+s}
		\vf_{r,s}(L_{m,i})=d_{r,s}(m,i)L_{m+r,i+s},
	\end{align}
	where $d_{r,s}(m,i)\in\C$.
	
	\begin{lem}\label{conditions-on-d}
		Let $\vf_{r,s}:\B(q)\to\B(q)$ be a linear map satisfying \cref{Dl_rs(L_mi)=d_rs(m_i)L_m+r_i+s}. Then $\vf_{r,s}$ is a $\frac 12$-derivation of $\B(q)$ if and only if 
		\begin{align}\label{one-half-der-in-terms-of-d_rs}
		2(n(i+q)-m(j+q))d_{r,s}(m+n,i+j)&=(n(i+s+q)-(m+r)(j+q))d_{r,s}(m,i)\notag\\
		&\quad+((n+r)(i+q)-m(j+s+q))d_{r,s}(n,j).
		\end{align}
	\end{lem}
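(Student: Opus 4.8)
The plan is to prove both implications simultaneously by substituting basis elements into the defining identity \cref{vf(xy)=half(vf(x)y+xvf(y))} of \cref{12der} and comparing coefficients. Since the bracket is bilinear and $\vf_{r,s}$ is linear, $\vf_{r,s}$ is a $\frac 12$-derivation of $\B(q)$ if and only if the identity \cref{vf(xy)=half(vf(x)y+xvf(y))} holds on every pair of basis vectors $x=L_{m,i}$, $y=L_{n,j}$ with $m,i,n,j\in\Z$. Thus it suffices to evaluate the three terms appearing in that identity on such a pair and read off the resulting scalar relation.

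First I would compute the left-hand side: using the multiplication table \cref{[L_mi_L_nj]=(n(i+q)-m(j+q))L_m+n_i+j} together with \cref{Dl_rs(L_mi)=d_rs(m_i)L_m+r_i+s}, one gets $\vf_{r,s}([L_{m,i},L_{n,j}])=(n(i+q)-m(j+q))\,d_{r,s}(m+n,i+j)\,L_{m+n+r,i+j+s}$. Next I would expand the two brackets on the right-hand side. From \cref{Dl_rs(L_mi)=d_rs(m_i)L_m+r_i+s} we have $\vf_{r,s}(L_{m,i})=d_{r,s}(m,i)L_{m+r,i+s}$, so another application of \cref{[L_mi_L_nj]=(n(i+q)-m(j+q))L_m+n_i+j} yields $[\vf_{r,s}(L_{m,i}),L_{n,j}]=d_{r,s}(m,i)\,(n(i+s+q)-(m+r)(j+q))\,L_{m+n+r,i+j+s}$ and, symmetrically, $[L_{m,i},\vf_{r,s}(L_{n,j})]=d_{r,s}(n,j)\,((n+r)(i+q)-m(j+s+q))\,L_{m+n+r,i+j+s}$.

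Finally, the key observation is that all three expressions are scalar multiples of the single basis vector $L_{m+n+r,i+j+s}$. Hence \cref{vf(xy)=half(vf(x)y+xvf(y))} holds on the pair $(L_{m,i},L_{n,j})$ if and only if the corresponding scalar coefficients coincide; equating them and clearing the factor $\frac12$ produces exactly \cref{one-half-der-in-terms-of-d_rs}. Since every step is reversible and the argument applies verbatim for all $m,i,n,j\in\Z$, the two conditions are equivalent, which is the assertion of the lemma. There is no genuine obstacle here: the only point requiring care is that the grading shifts $(r,s)$ are applied consistently so that all three terms land in the same homogeneous component $\B(q)_{m+n+r,i+j+s}$, which is precisely what makes the term-by-term coefficient comparison legitimate.
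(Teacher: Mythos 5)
Your proposal is correct and follows essentially the same route as the paper's proof: evaluate \cref{vf(xy)=half(vf(x)y+xvf(y))} on pairs of basis vectors, compute both sides via \cref{[L_mi_L_nj]=(n(i+q)-m(j+q))L_m+n_i+j} and \cref{Dl_rs(L_mi)=d_rs(m_i)L_m+r_i+s}, and compare coefficients of the common basis vector $L_{m+n+r,i+j+s}$. All three bracket computations match the paper's, so nothing further is needed.
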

	\begin{proof}
		It is obvious that $\vf_{r,s}$ is a $\frac 12$-derivation of $\B(q)$ if and only if \cref{vf(xy)=half(vf(x)y+xvf(y))} holds on the basis $\{L_{m,i} \mid m, i \in \Z\}$. Using \cref{[L_mi_L_nj]=(n(i+q)-m(j+q))L_m+n_i+j,Dl_rs(L_mi)=d_rs(m_i)L_m+r_i+s}, for all $i, j, m, n \in \Z$ we have
		\begin{align*}
		2\vf_{r,s}([L_{m,i},L_{n,j}])&= 2\vf_{r,s}((n(i + q) - m(j + q))L_{m+n,i+j})\\
		&=2(n(i + q) - m(j + q))d_{r,s}(m+n,i+j)L_{m+n+r,i+j+s}.
		\end{align*}
		On the other hand,
		\begin{align*}
		[\vf_{r,s}(L_{m,i}),L_{n,j}]+[L_{m,i},\vf_{r,s}(L_{n,j})]&=[d_{r,s}(m,i)L_{m+r,i+s},L_{n,j}]+[L_{m,i},d_{r,s}(n,j)L_{n+r,j+s}]\\
		&=(n(i+s+q)-(m+r)(j+q))d_{r,s}(m,i)L_{m+n+r,i+j+s}\\
		&\quad+((n+r)(i+q)-m(j+s+q))d_{r,s}(n,j)L_{m+n+r,i+j+s}.
		\end{align*}
		Thus, \cref{vf(xy)=half(vf(x)y+xvf(y))} is equivalent to \cref{one-half-der-in-terms-of-d_rs}.
	\end{proof}

	\subsubsection{The case $q\ne 0$}
	
	\begin{lem}\label{d_rs-is-zero-for-r-ne-0}
		Let $\vf$ be a $\frac 12$-derivation of $\B(q)$ and $r\ne 0$. Then $\vf_{r,s}=0$.
	\end{lem}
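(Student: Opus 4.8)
The plan is to work entirely with the scalar functional equation \cref{one-half-der-in-terms-of-d_rs} for the coefficients $d_{r,s}$, since by the remark preceding \cref{conditions-on-d} it suffices to prove $d_{r,s}\equiv 0$ for each fixed $r\ne 0$ and each $s$; this gives $\vf_{r,s}=0$ by \cref{Dl_rs(L_mi)=d_rs(m_i)L_m+r_i+s}. Write $d=d_{r,s}$ for brevity. The first move is to feed the equation the degenerate substitution $n=j=0$, which collapses it to
\[
q(r-m)\,d(m,i)=\bigl(r(i+q)-m(s+q)\bigr)d(0,0)
\]
for all $m,i\in\Z$. The decisive trick is now to specialise $m=r$: the left-hand side vanishes identically because of the factor $r-m$, leaving $0=r(i-s)\,d(0,0)$. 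Since $r\ne 0$ and $\Z$ contains integers $i\ne s$, this forces $d(0,0)=0$; feeding this back into the displayed identity and using $q(r-m)\ne 0$ gives $d(m,i)=0$ for every $m\ne r$.

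It remains to kill the single resonant line $d(r,\cdot)$. For this I would substitute $n=0$ and $m=r$ with general $j$. Since $0\ne r$, the vanishing already established gives $d(0,j)=0$, and the equation reduces to $-2r(j+q)\,d(r,i+j)=-2r(j+q)\,d(r,i)$, that is, $d(r,i+j)=d(r,i)$ whenever $j+q\ne 0$. As $q\ne 0$, for $q\notin\Z$ every shift is admissible, while for $q\in\Z\setminus\{0\}$ one avoids the single forbidden shift $j=-q$ by composing two admissible shifts; either way $d(r,\cdot)$ is constant, say $d(r,i)=e$. Finally I would substitute $m=n=r$: the left-hand side vanishes because $d(2r,i+j)=0$ (as $2r\ne r$), while the right-hand side collapses to $3r(i-j)e$. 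Choosing $i\ne j$ and using $r\ne 0$ yields $e=0$, completing the proof that $d\equiv 0$.

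The computations are all routine once the substitutions are chosen, so the only real obstacle is spotting the right specialisations. In particular, the key insight is that putting $m=r$ (rather than trying to solve for $d(m,i)$ generically) is what turns the $n=j=0$ identity into a clean constraint on $d(0,0)$ alone, and that the diagonal choice $m=n=r$ is exactly what isolates the resonant coefficient $e$ through the nonzero factor $3r(i-j)$. A minor bookkeeping point is the degenerate shift $j=-q$ when $q$ is a nonzero integer, which is dispatched by the two-step shift argument above.
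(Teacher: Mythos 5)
Your proof is correct. The first half --- the substitution $n=j=0$ leading to $q(r-m)d_{r,s}(m,i)=(r(i+q)-m(s+q))d_{r,s}(0,0)$, then $m=r$ with $i\ne s$ to force $d_{r,s}(0,0)=0$ and hence $d_{r,s}(m,i)=0$ for $m\ne r$ --- is exactly the paper's argument (these are \cref{d_rs(m_i)-in-terms-of-d_rs(0_0)} and \cref{d_rs(m_i)=0-for-m-ne-r}). The endgame differs: to kill the resonant line $d_{r,s}(r,\cdot)$, the paper substitutes $m=-n$, $j=-i$ and then $n=-r$, which gives $d_{r,s}(r,i)=0$ for all $i\ne 3q+s$, and then disposes of the single exceptional coefficient $d_{r,s}(r,3q+s)$ with one further substitution. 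You instead prove that $d_{r,s}(r,\cdot)$ is constant via the shift relation obtained from $n=0$, $m=r$ (using $d_{r,s}(0,j)=0$ and the two-step composition to handle the forbidden shift $j=-q$ when $q\in\Z\setminus\{0\}$), and then annihilate the constant with the diagonal substitution $m=n=r$, whose right-hand side does collapse to $3r(i-j)e$ as you claim. Both routes are elementary and of comparable length; yours trades the paper's case analysis around the exceptional index $i=3q+s$ for the small bookkeeping of composing shifts. One point worth making explicit in your write-up: the deduction $q(r-m)\ne 0$ for $m\ne r$ uses the standing hypothesis $q\ne 0$ of this subsubsection, which is precisely why the case $q=0$ needs the separate treatment of \cref{d_rs=0-for(r_s)-ne-(0_0)}.
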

	\begin{proof}
		Taking $n=j=0$ in \cref{one-half-der-in-terms-of-d_rs}, we obtain
		\begin{align*}
			-2mq\cdot d_{r,s}(m,i)&=-(m+r)q\cdot d_{r,s}(m,i)+(r(i+q)-m(s+q))d_{r,s}(0,0),
		\end{align*}
		so
		\begin{align}\label{d_rs(m_i)-in-terms-of-d_rs(0_0)}
		(r-m)q\cdot d_{r,s}(m,i)&=(r(i+q)-m(s+q))d_{r,s}(0,0).
		\end{align}
		Choosing $m=r\ne 0$ and $i\ne s$ in \cref{d_rs(m_i)-in-terms-of-d_rs(0_0)} we get $d_{r,s}(0,0)=0$. Therefore, 
		\begin{align}\label{d_rs(m_i)=0-for-m-ne-r}
			d_{r,s}(m,i)=0, \mbox{ if }m\ne r,
		\end{align}
		by \cref{d_rs(m_i)-in-terms-of-d_rs(0_0)}. Now, taking $m=-n$ and $j=-i$ in \cref{one-half-der-in-terms-of-d_rs}, we have
		\begin{align}\label{d_ts(0_0)-and-d_rs(-n_i)_d_rs(n_-i)}
		2nq\cdot d_{r,s}(0,0)&=(2nq+ns+ri-rq)d_{r,s}(-n,i)+(2nq+ns+ri+rq)d_{r,s}(n,-i).
		\end{align}
		Choosing, moreover, $n=-r\ne 0$ (in particular, $n\ne r$), we obtain from \cref{d_ts(0_0)-and-d_rs(-n_i)_d_rs(n_-i),d_rs(m_i)=0-for-m-ne-r} that $0=(-3q-s+i)d_{r,s}(r,i)$, whence 
		\begin{align}\label{d_rs(r_i)=0-for-i-ne-3q+s}
			d_{r,s}(r,i)=0, \mbox{ if }i\ne 3q+s.
		\end{align}
		In view of \cref{d_rs(m_i)=0-for-m-ne-r,d_rs(r_i)=0-for-i-ne-3q+s}, it remains to prove that $d_{r,s}(r,3q+s)=0$. Take $m=r$, $i=3q+s$ and $n\not\in\{0,r\}$ in \cref{one-half-der-in-terms-of-d_rs}. Then $0=(n(4q+2s)-2r(j+q))d_{r,s}(r,3q+s)$ by \cref{d_rs(m_i)=0-for-m-ne-r}, so choosing $j\ne \frac nr(2q+s)-q$, we get $d_{r,s}(r,3q+s)=0$.
	\end{proof}
	
	\begin{lem}\label{vf_0s=0-unless-s=q}
		Let $\vf$ be a $\frac 12$-derivation of $\B(q)$ and $s\ne 0$. If $s\ne q$, then $\vf_{0,s}=0$. Moreover, $d_{0,q}(m,i)=0$, unless $(m,i)=(0,-2q)$.  
	\end{lem}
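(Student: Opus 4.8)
The plan is to work entirely with the scalar identity \cref{one-half-der-in-terms-of-d_rs} specialized to $r=0$, and to pin down the coefficients $d_{0,s}(m,i)$ by feeding in well-chosen values of the free parameters $m,i,n,j$, exactly as in the treatment of the case $r\ne 0$ in \cref{d_rs-is-zero-for-r-ne-0}. Throughout one exploits that $q\ne 0$, which is what allows division by $q$ at the decisive moments.

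First I would set $n=j=0$. The relation collapses to $q\,d_{0,s}(m,i)=(s+q)d_{0,s}(0,0)$ for $m\ne 0$, so that $d_{0,s}(m,i)$ equals a single constant $c=\frac{s+q}{q}d_{0,s}(0,0)$, independent of both $m\ne 0$ and $i$. Next I would choose $m,n$ with $m\ne n$ and $m,n,m+n$ all nonzero (say $m=1$, $n=2$), leaving $i,j$ free; then all three coefficients appearing in \cref{one-half-der-in-terms-of-d_rs} equal $c$, and collecting terms reduces the identity to $(m-n)s\,c=0$. Since $s\ne 0$ and $m\ne n$, this forces $c=0$, i.e. $\vf_{0,s}$ annihilates every $L_{m,i}$ with $m\ne 0$. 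To reach the line $m=0$, I would then put $n=-m$ with $m\ne 0$; using $c=0$ the right-hand side vanishes and one is left with $(k+2q)d_{0,s}(0,k)=0$, where $k=i+j$ ranges over all of $\Z$. Hence $d_{0,s}(0,k)=0$ for every $k\ne -2q$.

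The only coefficient not yet eliminated is $d_{0,s}(0,-2q)$, and this is where the real content lies (it is a genuine basis coefficient only when $2q\in\Z$). To kill it I would substitute $m=0$, $i=-2q$ into \cref{one-half-der-in-terms-of-d_rs} with $n\ne 0$; since $c=0$ makes the two coefficients with first index $n\ne 0$ vanish, the relation degenerates to $(s-q)d_{0,s}(0,-2q)=0$. For $s\ne q$ the factor $s-q$ is nonzero, so $d_{0,s}(0,-2q)=0$ and therefore $\vf_{0,s}=0$, proving the first assertion; for $s=q$ the factor degenerates and no information is obtained, which is precisely why $d_{0,q}(0,-2q)$ survives while all other $d_{0,q}(m,i)$ vanish, giving the second assertion. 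The main obstacle is isolating and then removing this single resonant coefficient $d_{0,s}(0,-2q)$: every generic substitution leaves it untouched, and it is genuinely exceptional at $s=q$. The entire scheme also rests on $q\ne 0$, both to extract the constant $c$ in the first step and to guarantee that the factors $(k+2q)$ and $(s-q)$ do their job.
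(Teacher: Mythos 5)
Your proof is correct and follows the same overall skeleton as the paper's: specialize \cref{one-half-der-in-terms-of-d_rs} to $r=0$, use $n=j=0$ to see that $d_{0,s}(m,i)$ equals a single constant $c$ off the line $m=0$ (this is where $q\ne 0$ enters), kill that constant, then use $n=-m$ to clear the line $m=0$ away from $i=-2q$, and finally substitute $(m,i)=(0,-2q)$ to extract the factor $s-q$. The one place where you genuinely depart from the paper is the middle step. The paper proves $d_{0,s}(0,0)=0$ by a case split on $q\in\Z$ versus $q\notin\Z$: in the first case it derives $q^2+3sq+s^2=0$ and invokes the irrationality of $\sqrt 5$, and in the second it needs the auxiliary observation $1+sq^{-1}\ne 0$ together with the substitution $i=j=0$. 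Your substitution with $m\ne n$ and $m,n,m+n$ all nonzero makes all three coefficients in the identity equal to $c$ and reduces the whole relation at once to $(n-m)s\,c=0$, which kills $c$ with no case analysis; the intermediate fact $d_{0,s}(0,0)=0$ is then not even needed separately, since the line $m=0$ is handled afterwards anyway. This is a small but real simplification. The remaining computations you describe check out, including the final degeneration to $(s-q)\,d_{0,s}(0,-2q)=0$, which is exactly the point where the exceptional coefficient survives at $s=q$ and yields the ``moreover'' part of the statement.
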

	\begin{proof}
		Write \cref{one-half-der-in-terms-of-d_rs} with $r=0$:
		\begin{align}\label{one-half-der-in-terms-of-d_0s}
		2(n(i+q)-m(j+q))d_{0,s}(m+n,i+j)&=(n(i+s+q)-m(j+q))d_{0,s}(m,i)\notag\\
		&\quad+(n(i+q)-m(j+s+q))d_{0,s}(n,j).
		\end{align}
		Then putting $n=j=0$ and $m\ne 0$, we obtain 
		\begin{align}\label{d_0s(m_i)-in-terms-of-d_0s(0_0)}
			d_{0,s}(m,i)=\left(1+sq^{-1}\right)d_{0,s}(0,0),\mbox{ if }m\ne 0.
		\end{align}
		We will now prove that $d_{0,s}(0,0)=0$.
		
		\textit{Case 1.} $q\in\Z$. Take $m=-n\ne 0$ and $j=-i$ in \cref{one-half-der-in-terms-of-d_0s} and then apply \cref{d_0s(m_i)-in-terms-of-d_0s(0_0)}:
		\begin{align*}
		2q\cdot d_{0,s}(0,0)=(2q+s)(d_{0,s}(-n,i)+d_{0,s}(n,-i))=2(2q+s)\left(1+sq^{-1}\right)d_{0,s}(0,0).
		\end{align*}
		Assuming $d_{0,s}(0,0)\ne 0$, we obtain $q=(2q+s)\left(1+sq^{-1}\right)$, whence $q^2+3sq+s^2=0$. It follows that $q=\frac{-3\pm \sqrt 5}2 s$, which contradicts $q\in\Z$.
		
		\textit{Case 2.} $q\not\in\Z$. Observe that $1+sq^{-1}\ne 0$ in this case. Taking $i=j=0$ in \cref{one-half-der-in-terms-of-d_0s}, we have:
		\begin{align*}
		2(n-m)q\cdot d_{0,s}(m+n,0)&=(ns+(n-m)q)d_{0,s}(m,0)+((n-m)q-ms)d_{0,s}(n,0).
		\end{align*}
		Choose $m,n,m\pm n\ne 0$. Then thanks to \cref{d_0s(m_i)-in-terms-of-d_0s(0_0)} and $1+sq^{-1}\ne 0$:
		\begin{align*}
			2(n-m)q\cdot d_{0,s}(0,0)&=(ns+(n-m)q)d_{0,s}(0,0)+((n-m)q-ms)d_{0,s}(0,0).
		\end{align*}
		Assuming $d_{0,s}(0,0)\ne 0$, we come to $(n-m)s=0$. So, $n-m=0$, a contradiction. 
		
		Thus, $d_{0,s}(0,0)=0$, and hence by \cref{d_0s(m_i)-in-terms-of-d_0s(0_0)}
		\begin{align}\label{d_0s(m_i)=0-for-m-ne-0-or-m=i=0}
		d_{0,s}(m,i)=0,\mbox{ if }m\ne 0.
		\end{align}
		It remains to analyze $d_{0,s}(0,i)$. To this end, put $m=-n\ne 0$ and $j=0$ in \cref{one-half-der-in-terms-of-d_0s} and use \cref{d_0s(m_i)=0-for-m-ne-0-or-m=i=0}:
		\begin{align*}
		2(2q+i)d_{0,s}(0,i)&=(2q+i+s)(d_{0,s}(-n,i)+d_{0,s}(n,0))=0.
		\end{align*}
		Therefore, 
		\begin{align}\label{d_0s(0_i)=0-for-i-ne-2q}
			d_{0,s}(0,i)=0,\mbox{ if }i\ne -2q.
		\end{align}
		Combining \cref{d_0s(m_i)=0-for-m-ne-0-or-m=i=0,d_0s(0_i)=0-for-i-ne-2q}, we conclude that $d_{0,s}(m,i)=0$ for $(m,i)\ne (0,-2q)$.
		
		Assume now that $s\ne q$. Take $m=0$, $n\ne 0$ and $i=-2q$ in \cref{one-half-der-in-terms-of-d_0s}. We have
		\begin{align*}
		-2q\cdot d_{0,s}(n,j-2q)=(s-q)d_{0,s}(0,-2q)-q\cdot d_{0,s}(n,j),
		\end{align*}
		whence $(s-q)d_{0,s}(0,-2q)=0$ by \cref{d_0s(m_i)=0-for-m-ne-0-or-m=i=0}. Since $s\ne q$, then $d_{0,s}(0,-2q)=0$.
	\end{proof}

\begin{lem}\label{alpha-half-der}
	Let $q\in\Z$. Then the linear map $\af:\B(q)\to\B(q)$ such that 
	\begin{align}\label{af(L_mi)=0-or-L_0-q}
		\af(L_{m,i})=
		\begin{cases}
			0, & (m,i)\ne(0,-2q),\\
			L_{0,-q}, & (m,i)=(0,-2q),
		\end{cases}
	\end{align}
	is a $\frac 12$-derivation of $\B(q)$.
\end{lem}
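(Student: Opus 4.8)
The plan is to verify the defining identity \cref{vf(xy)=half(vf(x)y+xvf(y))} for $\af$ directly on the basis $\{L_{m,i}\mid m,i\in\Z\}$, rather than feeding the coefficient data into \cref{conditions-on-d}; the map $\af$ is supported on a single basis vector, so a head-on computation is the shortest route. Since $\af$ annihilates every basis vector except for the assignment $L_{0,-2q}\mapsto L_{0,-q}$, I would organize the check according to where the relevant brackets land and which of the arguments can coincide with $(0,-2q)$.

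For the right-hand side $\frac12\bigl([\af(L_{m,i}),L_{n,j}]+[L_{m,i},\af(L_{n,j})]\bigr)$, the first observation I would record is that the image of $\af$ lies in $\C L_{0,-q}$, and that $L_{0,-q}$ is \emph{central} in $\B(q)$: indeed, \cref{[L_mi_L_nj]=(n(i+q)-m(j+q))L_m+n_i+j} gives $[L_{0,-q},L_{n,j}]=(n(-q+q)-0\cdot(j+q))L_{n,j-q}=0$ for all $n,j$. Consequently both brackets on the right-hand side vanish identically whenever the $\af$-factor is nonzero (and trivially otherwise), so the whole right-hand side is $0$ for all $m,i,n,j$.

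For the left-hand side $\af([L_{m,i},L_{n,j}])$, the point is that $\af$ can return a nonzero value only when $[L_{m,i},L_{n,j}]$ has a component along $L_{0,-2q}$, that is, when $m+n=0$ and $i+j=-2q$. In that situation the structure constant in \cref{[L_mi_L_nj]=(n(i+q)-m(j+q))L_m+n_i+j} becomes, after substituting $n=-m$, equal to $-m\bigl((i+q)+(j+q)\bigr)=-m(i+j+2q)=0$. Hence $\af([L_{m,i},L_{n,j}])=0$ on all basis elements too, and the two sides agree.

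The computation has no genuine obstacle; the only point demanding care is the \emph{double} vanishing that makes both sides identically zero — the centrality of $L_{0,-q}$, which kills the right-hand side, together with the vanishing of the structure constant $n(i+q)-m(j+q)$ precisely on the locus $m+n=0$, $i+j=-2q$ where $\af$ could act nontrivially, which kills the left-hand side. As a consistency check one may instead substitute $r=0$, $s=q$ and the support of $d_{0,q}$ into \cref{one-half-der-in-terms-of-d_rs} and confirm that every nonzero $d_{0,q}$-term is multiplied by a coefficient that vanishes, recovering the same conclusion.
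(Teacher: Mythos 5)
Your proof is correct, and it takes a genuinely different (and shorter) route than the paper. The paper feeds the coefficient data $d_{0,q}$ into \cref{conditions-on-d} and verifies \cref{one-half-der-in-terms-of-d_rs} by a four-way case split on which of $(m,i)$, $(n,j)$, $(m+n,i+j)$ equals $(0,-2q)$; each case then reduces to checking that the surviving terms carry vanishing coefficients. You instead verify \cref{vf(xy)=half(vf(x)y+xvf(y))} directly and isolate the two structural facts that make everything collapse: the image of $\af$ lies in $\C L_{0,-q}$, which is central in $\B(q)$ (killing the right-hand side), and the structure constant $n(i+q)-m(j+q)$ in \cref{[L_mi_L_nj]=(n(i+q)-m(j+q))L_m+n_i+j} vanishes identically on the locus $m+n=0$, $i+j=-2q$ where $\af$ could act nontrivially (killing the left-hand side). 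Both computations check out. What your version buys is conceptual clarity and economy: it shows that \emph{both} sides of the $\frac12$-derivation identity are identically zero for $\af$, and it anticipates exactly the two facts ($\B(q)\cdot\B(q)\sst Z(\B(q))$ and $[\B(q),\B(q)]\sst\Ann(\B(q))$ for the induced product) that the paper reproves from scratch at the end of \cref{thalg} when verifying \cref{Trans-Leibniz}. The paper's version buys uniformity, since the same coefficient machinery \cref{one-half-der-in-terms-of-d_rs} is reused throughout the classification lemmas. The hypothesis $q\in\Z$ enters only to make $(0,-2q)$ and $(0,-q)$ legitimate indices of basis vectors, which is consistent with your argument.
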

\begin{proof}
	Observe that $\af=\af_{0,q}$. In view of \cref{conditions-on-d} we need to check \cref{one-half-der-in-terms-of-d_rs} for $(r,s)=(0,q)$ and
	\begin{align}\label{d_0q(m_i)=0-or-a}
		d_{0,q}(m,i)=
		\begin{cases}
		0, & (m,i)\ne(0,-2q),\\
		1, & (m,i)=(0,-2q).
		\end{cases}
	\end{align}
	
	\textit{Case 1.} $(m,i),(n,j),(m+n,i+j)\ne(0,-2q)$. Then both sides of \cref{one-half-der-in-terms-of-d_rs} are zero.
	
	\textit{Case 2.} $(m,i)=(0,-2q)$. Then \cref{one-half-der-in-terms-of-d_rs} becomes
	\begin{align*}
	-2nq\cdot d_{0,q}(n,j-2q)=-nq\cdot d_{0,q}(n,j).
	\end{align*}
	If $n=0$, then it is trivially satisfied, otherwise both sides are zero by \cref{d_0q(m_i)=0-or-a}.
	
	\textit{Case 3.} $(n,j)=(0,-2q)$. Then \cref{one-half-der-in-terms-of-d_rs} becomes
	\begin{align*}
	2mq\cdot d_{r,s}(m,i-2q)=mq\cdot d_{r,s}(m,i),
	\end{align*}
	so this case is similar to Case 2.
	
	\textit{Case 4.} $(m+n,i+j)=(0,-2q)$. Then \cref{one-half-der-in-terms-of-d_rs} becomes
	\begin{align*}
	0=nq\cdot d_{0,q}(-n,i)+nq\cdot d_{0,q}(n,-i-2q),
	\end{align*}
	and again this holds by \cref{d_0q(m_i)=0-or-a}.
\end{proof}

Summarizing the results of \cref{vf_0s=0-unless-s=q,alpha-half-der,d_rs-is-zero-for-r-ne-0}, we have the following.
\begin{cor}\label{d_rs-is-zero-for-rs-not-00-or-0q}
	Let $\vf$ be a $\frac 12$-derivation of $\B(q)$. If $q\not\in\Z$, then $\vf_{r,s}=0$ for all $(r,s)\ne (0,0)$. If $q\in\Z$, then $\vf_{r,s}=0$ for all $(r,s)\not\in\{(0,0),(0,q)\}$ and $\vf_{0,q}\in\gen\af$.
\end{cor}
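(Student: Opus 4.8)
The plan is to simply assemble the three preceding lemmas, keeping careful track of which homogeneous components can survive. I would start by recalling that $\vf=\sum_{r,s\in\Z}\vf_{r,s}$ and that $\vf$ is a $\frac 12$-derivation if and only if each component $\vf_{r,s}$ is one; it therefore suffices to decide, for each $(r,s)\in\Z\times\Z$, whether $\vf_{r,s}$ must vanish. By \cref{d_rs-is-zero-for-r-ne-0} every component with $r\ne 0$ is already zero, so I am left to analyse only the components $\vf_{0,s}$ with $s\in\Z$.

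The observation that drives the whole dichotomy is that the grading shift $s$ is always an integer, so the equality $s=q$ can hold for some $s\in\Z$ if and only if $q\in\Z$. In the case $q\not\in\Z$ this means every $s\ne 0$ automatically satisfies $s\ne q$, and \cref{vf_0s=0-unless-s=q} then forces $\vf_{0,s}=0$; combined with the vanishing for $r\ne 0$, I obtain $\vf_{r,s}=0$ for all $(r,s)\ne(0,0)$, which is the first assertion.

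In the case $q\in\Z$ I would argue as follows. For $s\ne 0$ with $s\ne q$ the same lemma again gives $\vf_{0,s}=0$, while for $s=q$ it yields $d_{0,q}(m,i)=0$ unless $(m,i)=(0,-2q)$. Reading this off through \cref{Dl_rs(L_mi)=d_rs(m_i)L_m+r_i+s}, it means $\vf_{0,q}$ sends $L_{0,-2q}$ to $d_{0,q}(0,-2q)L_{0,-q}$ and annihilates every other basis vector, so $\vf_{0,q}=d_{0,q}(0,-2q)\cdot\af$ with $\af$ the map of \cref{alpha-half-der}; hence $\vf_{0,q}\in\gen\af$. Conversely, \cref{alpha-half-der} guarantees that $\af$ is genuinely a $\frac 12$-derivation, so the entire line $\gen\af$ is attained. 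Together with $\vf_{r,s}=0$ for the remaining $(r,s)\not\in\{(0,0),(0,q)\}$, this gives the second assertion.

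Since every ingredient is already established, I do not expect a genuine obstacle: the substantive computations were front-loaded into the three lemmas, and the corollary only collates them. The one point that really requires attention is the integrality remark above, which is exactly what splits the behaviour of $\vf_{0,q}$ into the two cases; the remaining care is to notice that \cref{vf_0s=0-unless-s=q} pins $\vf_{0,q}$ down to a full scalar multiple of $\af$ rather than merely to a map supported at $(0,-2q)$, and that the converse direction, namely that every element of $\gen\af$ really is a $\frac 12$-derivation, is supplied by \cref{alpha-half-der}.
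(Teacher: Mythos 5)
Your proposal is correct and follows essentially the same route as the paper, which proves this corollary simply by citing \cref{d_rs-is-zero-for-r-ne-0,vf_0s=0-unless-s=q,alpha-half-der}; you merely spell out the integrality observation (that $s=q$ can occur for $s\in\Z$ only when $q\in\Z$) and the identification $\vf_{0,q}=d_{0,q}(0,-2q)\,\af$, both of which the paper leaves implicit.
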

	
\begin{lem}\label{d_00(m_i)-is-d_00(0_0)}
	Let $\vf$ be a $\frac 12$-derivation of $\B(q)$ satisfying \cref{Dl_rs(L_mi)=d_rs(m_i)L_m+r_i+s}. Then $\vf_{0,0}\in\gen\id$.
\end{lem}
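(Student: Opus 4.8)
The plan is to feed the degree-preserving case $(r,s)=(0,0)$ into the master identity \cref{one-half-der-in-terms-of-d_rs}. Since $\vf$ is a $\frac12$-derivation, so is $\vf_{0,0}$, and by \cref{conditions-on-d} the coefficients $d_{0,0}(m,i)$ satisfy \cref{one-half-der-in-terms-of-d_rs} with $r=s=0$. In that case all three prefactors coincide with the single structure constant $n(i+q)-m(j+q)$, so the identity collapses to
\[
2\bigl(n(i+q)-m(j+q)\bigr)d_{0,0}(m+n,i+j)=\bigl(n(i+q)-m(j+q)\bigr)\bigl(d_{0,0}(m,i)+d_{0,0}(n,j)\bigr).
\]
Hence, for every choice of indices making $n(i+q)-m(j+q)$ nonzero, we obtain the Jensen-type relation $2d_{0,0}(m+n,i+j)=d_{0,0}(m,i)+d_{0,0}(n,j)$. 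The goal is to conclude that $d_{0,0}$ is constant, which is exactly $\vf_{0,0}\in\gen\id$; throughout I use that we are in the case $q\ne0$.

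The concrete steps are as follows. First I would take $n=j=0$, so that the structure constant equals $-mq$, which is nonzero for every $m\ne0$ precisely because $q\ne0$; the relation then gives $d_{0,0}(m,i)=d_{0,0}(0,0)$ for all $m\ne0$ and all $i$, pinning down every value off the axis $m=0$. To reach the axis itself I would set $n=-m\ne0$ and $j=0$, making the structure constant $-m(i+2q)$: this is nonzero whenever $i\ne-2q$, and since $m+n=0$, $i+j=i$ while the two right-hand arguments $(m,i)$ and $(-m,0)$ both have nonzero first coordinate, the relation yields $d_{0,0}(0,i)=d_{0,0}(0,0)$ for all $i\ne-2q$.

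The main obstacle is the single exceptional point $(0,-2q)$, relevant only when $-2q\in\Z$. Whenever one tries to realize it as a left-hand argument, i.e. with $m+n=0$ and $i+j=-2q$, the structure constant $n(i+q)-m(j+q)$ collapses identically to $0$, so this point is invisible to the additive form of the relation and cannot be captured by the previous maneuvers. I would instead place it on the right-hand side: setting $(m,i)=(0,-2q)$ turns the structure constant into $-nq$, nonzero for $n\ne0$ again thanks to $q\ne0$, and then both right-hand values $d_{0,0}(n,j)$ and $d_{0,0}(n,j-2q)$ are already known to equal $d_{0,0}(0,0)$ (their first coordinate $n$ is nonzero), forcing $d_{0,0}(0,-2q)=d_{0,0}(0,0)$. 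Combining the three steps shows $d_{0,0}$ is constant, so $\vf_{0,0}=d_{0,0}(0,0)\,\id\in\gen\id$.
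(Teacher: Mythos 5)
Your proof is correct and takes essentially the same route as the paper's: the same three substitutions ($n=j=0$ with $m\ne 0$; then $n=-m\ne 0$, $j=0$; then $(m,i)=(0,-2q)$ placed on the right-hand side) applied to the collapsed identity $2d_{0,0}(m+n,i+j)=d_{0,0}(m,i)+d_{0,0}(n,j)$, invoking $q\ne 0$ at exactly the same points. The only difference is that you make explicit why the exceptional point $(0,-2q)$ is invisible when realized as a left-hand argument, which the paper leaves implicit.
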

\begin{proof}
	Write \cref{one-half-der-in-terms-of-d_rs} for $r=s=0$:
	\begin{align}\label{one-half-der-in-terms-of-d_00}
		2(n(i+q)-m(j+q))d_{0,0}(m+n,i+j)=(n(i+q)-m(j+q))(d_{0,0}(m,i)+d_{0,0}(n,j)).
	\end{align}
	Taking $n=j=0$ and $m\ne 0$ in \cref{one-half-der-in-terms-of-d_rs}, we obtain
	\begin{align*}
		2 d_{0,0}(m,i)=d_{0,0}(m,i)+d_{0,0}(0,0),
	\end{align*}
	whence
	\begin{align}\label{d_00(m_i)=d_00(0_0)-for-m-ne-0}
		d_{0,0}(m,i)=d_{0,0}(0,0), \mbox{ if }m\ne 0.
	\end{align}
	Now, put $m=-n\ne 0$ and $j=0$ in \cref{one-half-der-in-terms-of-d_00} and use \cref{d_00(m_i)=d_00(0_0)-for-m-ne-0}:
	\begin{align*}
		2(2q+i)d_{0,0}(0,i)=(2q+i)(d_{0,0}(-n,i)+d_{0,0}(n,0))=2(2q+i)d_{0,0}(0,0).
	\end{align*}
	Hence,
	\begin{align}\label{d_00(0_i)=d_00(0_0)-for-i-ne-2q}
	d_{0,0}(0,i)=d_{0,0}(0,0), \mbox{ if }i\ne -2q.
	\end{align}
	Finally, substitute $(m,i)=(0,-2q)$ in \cref{one-half-der-in-terms-of-d_00}. Then
	\begin{align*}
	2n\cdot d_{0,0}(n,j-2q)=n(d_{0,0}(0,-2q)+d_{0,0}(n,j)).
	\end{align*}
	So, for $n\ne 0$ we deduce from \cref{d_00(m_i)=d_00(0_0)-for-m-ne-0} that $2 d_{0,0}(0,0)=d_{0,0}(0,-2q)+d_{0,0}(0,0)$, whence $d_{0,0}(0,-2q)=d_{0,0}(0,0)$.
	Combining this with \cref{d_00(m_i)=d_00(0_0)-for-m-ne-0,d_00(0_i)=d_00(0_0)-for-i-ne-2q}, we conclude that $\vf_{0,0}=d_{0,0}(0,0)\id$.
\end{proof}

\begin{prop}\label{Dl(B(q))=<id_af>}
	Let $q\ne 0$. Then
	\begin{align*}
		\Dl(\B(q))=
		\begin{cases}
		\gen{\id}, & q\not\in\Z,\\
		\gen{\id,\af}, & q\in\Z\setminus\{0\}.
		\end{cases}
	\end{align*}
\end{prop}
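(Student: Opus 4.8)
The plan is simply to assemble the componentwise results already established in this subsection. Recall from the remark preceding \cref{conditions-on-d} that any linear map decomposes as $\vf=\sum_{r,s\in\Z}\vf_{r,s}$ and that $\vf$ is a $\frac12$-derivation if and only if each $\vf_{r,s}$ is one. So I would take an arbitrary $\frac12$-derivation $\vf$ of $\B(q)$, pass to its homogeneous components $\vf_{r,s}$, and determine which of them can be nonzero.

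First I would apply \cref{d_rs-is-zero-for-rs-not-00-or-0q}: when $q\not\in\Z$ it forces $\vf_{r,s}=0$ for every $(r,s)\ne(0,0)$, and when $q\in\Z\setminus\{0\}$ it leaves only $\vf_{0,0}$ and $\vf_{0,q}$, the latter lying in $\gen\af$. In both cases at most two components survive, so $\vf$ equals the (finite) sum of these and no question of summing infinitely many components arises. Next, \cref{d_00(m_i)-is-d_00(0_0)} identifies the remaining diagonal part as $\vf_{0,0}\in\gen\id$. Combining these yields the inclusions $\Dl(\B(q))\sst\gen\id$ for $q\not\in\Z$ and $\Dl(\B(q))\sst\gen{\id,\af}$ for $q\in\Z\setminus\{0\}$.

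For the reverse inclusions I would observe that $\id$ is a trivial $\frac12$-derivation and that $\af$ is a $\frac12$-derivation by \cref{alpha-half-der}, which gives the opposite containments at once. To see that the right-hand side is genuinely two-dimensional in the integer case, I would check that $\id$ and $\af$ are linearly independent: evaluating $a\,\id+b\,\af$ at $L_{0,0}$ gives $aL_{0,0}$, forcing $a=0$, and then evaluating $b\,\af$ at $L_{0,-2q}$ gives $bL_{0,-q}$, forcing $b=0$, where $q\ne 0$ ensures $-2q\ne -q$ so that $L_{0,-q}$ is a basis vector distinct from the input. Since every step is a direct appeal to an already-proved lemma, there is no real obstacle; the only point needing a little care is the bookkeeping that $\vf$ is faithfully recovered from its surviving components, which is exactly what the componentwise characterization of $\frac12$-derivations guarantees.
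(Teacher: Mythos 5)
Your proposal is correct and follows exactly the paper's own route: the paper proves this proposition as an immediate consequence of \cref{d_rs-is-zero-for-rs-not-00-or-0q,alpha-half-der,d_00(m_i)-is-d_00(0_0)}, which are precisely the three results you invoke. Your additional remarks on the finiteness of the surviving components and the linear independence of $\id$ and $\af$ are harmless elaborations of what the paper leaves implicit.
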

\begin{proof}
	A consequence of \cref{d_rs-is-zero-for-rs-not-00-or-0q,alpha-half-der,d_00(m_i)-is-d_00(0_0)}.
\end{proof}

	\subsubsection{The case $q=0$}
	In this case \cref{one-half-der-in-terms-of-d_rs} reduces to
	\begin{align}\label{one-half-der-in-terms-of-d_rs-for-q=0}
	2(ni-mj)d_{r,s}(m+n,i+j)&=(n(i+s)-(m+r)j)d_{r,s}(m,i)\notag\\
	&\quad+((n+r)i-m(j+s))d_{r,s}(n,j).
	\end{align}
	
\begin{lem}\label{d_rs=0-for(r_s)-ne-(0_0)}
	Let $\vf$ be a $\frac 12$-derivation of $\B(0)$. If $(r,s)\ne (0,0)$, then $\vf_{r,s}=0$.
\end{lem}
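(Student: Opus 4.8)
The plan is to prove that the coefficient function $d_{r,s}$ from \cref{Dl_rs(L_mi)=d_rs(m_i)L_m+r_i+s} vanishes identically whenever $(r,s)\ne(0,0)$, the only tool being the reduced identity \cref{one-half-der-in-terms-of-d_rs-for-q=0}. First I would halve the work by means of a symmetry of $\B(0)$. Since the structure constant $ni-mj$ in \cref{[L_mi_L_nj]=(n(i+q)-m(j+q))L_m+n_i+j} changes sign when the two grading indices are interchanged, the linear map $\psi$ with $\psi(L_{m,i})=-L_{i,m}$ is an automorphism of $\B(0)$. Conjugation by an automorphism preserves $\frac 12$-derivations and sends $\vf_{r,s}$ to a $\frac 12$-derivation of degree $(s,r)$; hence it is enough to prove $\vf_{r,s}=0$ for $r\ne 0$, the case $r=0$, $s\ne 0$ following by applying $\psi$, and the two cases together exhausting all $(r,s)\ne(0,0)$.

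So assume $r\ne 0$ and abbreviate $f:=d_{r,s}$. I would first read off $f$ on the two coordinate axes. Setting $n=j=0$ in \cref{one-half-der-in-terms-of-d_rs-for-q=0} collapses the left-hand side and the first summand and leaves $(ri-ms)f(0,0)=0$, so $f(0,0)=0$ upon choosing, say, $(m,i)=(0,1)$. Setting $m=n=0$ gives $r\big(i\,f(0,j)-j\,f(0,i)\big)=0$, whence $f(0,i)=\gamma i$ is linear with $\gamma=f(0,1)$. Next, the substitution $m=0$ (with $i\ne 0$), taken at two different values of $i$ and subtracted, shows that every row $f(n,\cdot)$ with $n\ne 0$ is affine of slope $\gamma/2$. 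Feeding this affine form into the recursion obtained from $n=1$, $j=0$ and comparing the coefficients of $i^2$ forces $\gamma=\gamma/2$, i.e. $\gamma=0$. Therefore $f$ vanishes on the axis $m=0$ and each row $f(n,\cdot)$, $n\ne 0$, is a constant $a_n$.

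It remains to annihilate the $a_n$. With $\gamma=0$ the identity coming from $m=0$ becomes $2n\,f(n,i+j)=(n+r)f(n,j)$ for $i\ne 0$, which yields $(n-r)a_n=0$ and hence $a_n=0$ for all $n\notin\{0,r\}$; thus $f$ can survive only on the single row $m=r$. Finally I would return to the full identity \cref{one-half-der-in-terms-of-d_rs-for-q=0} with $m=r$ and $n=-r$: the left-hand side vanishes because $f(0,\cdot)=0$, the term carrying $f(-r,j)$ vanishes because $-r\notin\{0,r\}$, and what is left is $-r(i+s+2j)a_r=0$ for all $i,j$, forcing $a_r=0$. This gives $\vf_{r,s}=0$ for $r\ne 0$, and the symmetry $\psi$ completes the proof.

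The point where real care is needed is exactly what makes $q=0$ different from the cases treated in \cref{d_rs-is-zero-for-r-ne-0,vf_0s=0-unless-s=q}: when $q=0$ the leading factor $ni-mj$ of \cref{one-half-der-in-terms-of-d_rs-for-q=0} vanishes on a large set of indices, and, crucially, the substitution $n=j=0$ no longer contains the term $-2mq\,d_{r,s}(m,i)$ that for $q\ne 0$ tied every value $d_{r,s}(m,i)$ back to $d_{r,s}(0,0)$. One can therefore no longer propagate a single zero across the grading; instead one must extract the affine-and-then-constant structure of $f$ along the rows and eliminate the finitely many surviving parameters by over-determinacy, all the while choosing the specializations so as to stay off the degenerate locus $ni=mj$ and away from the values $i=0$ and $n=0$ where one is not allowed to divide. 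Organizing these specializations so that no index value needed in one step is excluded by another is the only genuinely delicate part.
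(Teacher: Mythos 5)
Your proof is correct, but it takes a genuinely different route from the paper's. The paper works directly from \cref{one-half-der-in-terms-of-d_rs-for-q=0} in three separate cases ($r\ne 0$ and $s\ne 0$; $r\ne 0$ and $s=0$; $r=0$ and $s\ne 0$), in each one assembling a small linear system in the boundary values $d_{r,s}(0,i)$ and $d_{r,s}(n,0)$ whose determinant $(r-n)(r+3n)$ (resp.\ $(s-i)(s+3i)$) is nonzero outside the exceptional set $\{r,-r/3\}$ (resp.\ $\{s,-s/3\}$), and then disposing of the exceptional rows by extra substitutions. You instead exploit the order-two automorphism $L_{m,i}\mapsto -L_{i,m}$ of $\B(0)$ --- a symmetry special to $q=0$ that the paper never uses --- to reduce everything to the single case $r\ne 0$, and then extract structural information about $f=d_{r,s}$: the column $f(0,\cdot)$ is linear of slope $\gamma$, each row $f(n,\cdot)$ with $n\ne 0$ is affine of slope $\gamma/2$, comparison of the $i^2$-coefficients in the $n=1$, $j=0$, $m=1$ instance forces $\gamma=0$, and the surviving row constants $a_n$ are killed by $(n-r)a_n=0$ together with the final substitution $m=r$, $n=-r$. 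I verified each step (the automorphism identity, $f(0,0)=0$ from $n=j=0$, the division by $i\ne 0$ in the $m=0$ substitution, and the legitimacy of comparing polynomial coefficients, which holds because the identity is valid for infinitely many $i$), and they all go through. Your route is shorter and more conceptual; the paper's case analysis has the advantage of being uniform with the substitutions already used for $q\ne 0$ in \cref{d_rs-is-zero-for-r-ne-0,vf_0s=0-unless-s=q}, at the cost of more bookkeeping.
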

\begin{proof}
	Put $m=j=0$ in \cref{one-half-der-in-terms-of-d_rs-for-q=0}:
	\begin{align}\label{d_rs(n_i)-in-terms-of-d_rs(0_i)-and-d_rs(n_0)}
		2ni\cdot d_{r,s}(n,i)&=n(i+s)d_{r,s}(0,i)+i(n+r)d_{r,s}(n,0).
	\end{align}
	In particular, substituting $n=0$, $i\ne 0$ and $n\ne 0$, $i=0$ in \cref{d_rs(n_i)-in-terms-of-d_rs(0_i)-and-d_rs(n_0)}, we see that
	\begin{align} \label{d_rs(0_0)=0-for-(r_s)-ne-(0_0)}
		d_{r,s}(0,0)=0,\mbox{ if }(r,s)\ne(0,0).
	\end{align}
	On the other hand, taking $m=-n$ and $j=0$ in \cref{one-half-der-in-terms-of-d_rs-for-q=0}, we have
	\begin{align}\label{2ni.d_rs(0_i)=...}
		2ni\cdot d_{r,s}(0,i)&=n(i+s)d_{r,s}(-n,i)+(n(i+s)+ri)d_{r,s}(n,0).
	\end{align}
	Observe that \cref{2ni.d_rs(0_i)=...} with $i=0$ together with \cref{d_rs(0_0)=0-for-(r_s)-ne-(0_0)} give
	\begin{align}\label{d_rs(-n_0)=-d_rs(n_0)}
	d_{r,s}(-n,0)=-d_{r,s}(n,0),\mbox{ if }s\ne 0.
	\end{align}
	Furthermore, if $i=-j$ and $m=0$, then \cref{one-half-der-in-terms-of-d_rs-for-q=0} becomes
	\begin{align}\label{2ni.d_rs(n_0)=...}
	2ni\cdot d_{r,s}(n,0)&=(n(i+s)+ri)d_{r,s}(0,i)+i(n+r)d_{r,s}(n,-i).
	\end{align}
	As above, this implies
	\begin{align}\label{d_rs(0_-i)=-d_rs(0_i)}
	d_{r,s}(0,-i)=-d_{r,s}(0,i),\mbox{ if }r\ne 0.
	\end{align}
	
	\textit{Case 1.} $r\ne 0$ and $s\ne 0$. 
	Then $n=-r$ in \cref{d_rs(n_i)-in-terms-of-d_rs(0_i)-and-d_rs(n_0),2ni.d_rs(n_0)=...} together with \cref{d_rs(-n_0)=-d_rs(n_0)} give
	\begin{align}
	2i\cdot d_{r,s}(-r,i)&=(i+s)d_{r,s}(0,i),\label{2i.d_rs(-r_i)=(i+s)d_rs(0_i)}\\
	2i\cdot d_{r,s}(r,0)&=-s\cdot d_{r,s}(0,i).\label{2i.d_rs(-r_0)=s.d_rs(0_i)}
	\end{align}
	On the other hand, $n=r$ in \cref{2ni.d_rs(0_i)=...} gives
	\begin{align}\label{2i.d_rs(0_i)=...}
	2i\cdot d_{r,s}(0,i)&=(i+s)d_{r,s}(-r,i)+(2i+s)d_{r,s}(r,0).
	\end{align}
	Multiplying \cref{2i.d_rs(0_i)=...} by $2i$ and using \cref{2i.d_rs(-r_i)=(i+s)d_rs(0_i),2i.d_rs(-r_0)=s.d_rs(0_i)}, we get
	\begin{align*}
		4i^2\cdot d_{r,s}(0,i)&=(i+s)^2d_{r,s}(0,i)-s(2i+s)d_{r,s}(0,i)=i^2\cdot d_{r,s}(0,i).
	\end{align*}
	Combining this with \cref{d_rs(0_0)=0-for-(r_s)-ne-(0_0)}, we conclude that
	\begin{align}\label{d_rs(0_i)=0}
		d_{r,s}(0,i)=0,\mbox{ if }r\ne 0\mbox{ and }s\ne 0.
	\end{align}
	Similarly, $i=-s$ in \cref{d_rs(n_i)-in-terms-of-d_rs(0_i)-and-d_rs(n_0),2ni.d_rs(0_i)=...} together with \cref{d_rs(0_-i)=-d_rs(0_i)} yield
	\begin{align}
		2n\cdot d_{r,s}(n,-s)&=(n+r)d_{r,s}(n,0),\label{2n.d_rs(n_-s)=(n+r)d_rs(n_0)}\\
		2n\cdot d_{r,s}(0,s)&=-r\cdot d_{r,s}(n,0).\label{2n.d_rs(0_s)=-r.d_rs(n_0)}
	\end{align}
	Now, $i=s$ in \cref{2ni.d_rs(n_0)=...} gives
	\begin{align}\label{2n.d_rs(n_0)=...}
	2n\cdot d_{r,s}(n,0)=(2n+r)d_{r,s}(0,s)+(n+r)d_{r,s}(n,-s).
	\end{align}
	Multiplying \cref{2n.d_rs(n_0)=...} by $2n$ and using \cref{2n.d_rs(n_-s)=(n+r)d_rs(n_0),2n.d_rs(0_s)=-r.d_rs(n_0)}, we come to
	\begin{align*}
	4n^2\cdot d_{r,s}(n,0)=-r(2n+r)d_{r,s}(n,0)+(n+r)^2d_{r,s}(n,0)=n^2d_{r,s}(n,0).
	\end{align*}
	So, together with \cref{d_rs(0_0)=0-for-(r_s)-ne-(0_0)} this results in
	\begin{align}\label{d_rs(n_0)=0}
		d_{r,s}(n,0)=0,\mbox{ if }r\ne 0\mbox{ and }s\ne 0.
	\end{align}
	It follows from \cref{d_rs(0_i)=0,d_rs(n_0)=0,d_rs(n_i)-in-terms-of-d_rs(0_i)-and-d_rs(n_0)} that
	\begin{align}\label{d_rs(n_i)=0}
		d_{r,s}(n,i)=0,\mbox{ if }r\ne 0, s\ne 0,n\ne 0\mbox{ and }i\ne 0.
	\end{align}
	Combining \cref{d_rs(n_i)=0,d_rs(n_0)=0,d_rs(0_i)=0}, we obtain 
	\begin{align}\label{d_rs(n_i)=0-if-r-ne-0-and-s-ne-0}
	d_{r,s}(n,i)=0,\mbox{ if }r\ne 0\mbox{ and }s\ne 0.
	\end{align}
	
	\textit{Case 2.} $r\ne 0$ and $s=0$. Substituting $n=-r$ into \cref{d_rs(n_i)-in-terms-of-d_rs(0_i)-and-d_rs(n_0),2ni.d_rs(0_i)=...}, we have
	\begin{align}
	2i\cdot d_{r,0}(-r,i)&=i\cdot d_{r,0}(0,i),\label{2i.d_r0(-r_i)=i.d_r0(0_i)}\\
	2i\cdot d_{r,0}(0,i)&=i\cdot d_{r,0}(r,i).\label{2i.d_r0(0_i)=i.d_r0(r_i)}
	\end{align}
	On the other hand, the substitution $n=r$ in \cref{d_rs(n_i)-in-terms-of-d_rs(0_i)-and-d_rs(n_0),2ni.d_rs(0_i)=...} leads to
	\begin{align}
	2i\cdot d_{r,0}(r,i)&=i\cdot d_{r,0}(0,i)+2i\cdot d_{r,0}(r,0),\label{2i.d_r0(r_i)=i.d_r0(0_i)+2i.d_r0(r_0)}\\
	2i\cdot d_{r,0}(0,i)&=i\cdot d_{r,0}(-r,i)+2i\cdot d_{r,0}(r,0).\label{2i.d_r0(0_i)=i.d_r0(-r_i)+2i.d_r0(r_0)}
	\end{align}
	It follows from \cref{2i.d_r0(-r_i)=i.d_r0(0_i),2i.d_r0(0_i)=i.d_r0(-r_i)+2i.d_r0(r_0)} that $3i\cdot d_{r,0}(0,i)=4i\cdot d_{r,0}(r,0)$. On the other hand, \cref{2i.d_r0(0_i)=i.d_r0(r_i),2i.d_r0(r_i)=i.d_r0(0_i)+2i.d_r0(r_0)} imply that $3i\cdot d_{r,0}(0,i)=2i\cdot d_{r,0}(r,0)$. This, together with \cref{d_rs(0_0)=0-for-(r_s)-ne-(0_0)} results in
	\begin{align}\label{d_r0(0_i)=0-for-r-ne-0}
		d_{r,0}(0,i)=0,\mbox{ if }r\ne 0.
	\end{align} 
	Hence, \cref{d_rs(n_i)-in-terms-of-d_rs(0_i)-and-d_rs(n_0),2ni.d_rs(0_i)=...,2ni.d_rs(n_0)=...} take the following form
	\begin{align}
	2ni\cdot d_{r,0}(n,i)&=i(n+r)d_{r,0}(n,0),\label{2ni.d_r0(n_i)=i(n+r)d_r0(n_0)}\\
	0&=ni\cdot d_{r,0}(-n,i)+i(n+r)d_{r,0}(n,0),\label{0=ni.d_r0(-n_i)+i(n+r)d_r0(n_0)}\\
	2ni\cdot d_{r,0}(n,0)&=i(n+r)d_{r,0}(n,-i).\label{2ni.d_r0(n_0)=i(n+r)d_r0(n_-i)}
	\end{align}
	Replacing $i$ by $-i$ in \cref{2ni.d_r0(n_0)=i(n+r)d_r0(n_-i)} and combining this with \cref{2ni.d_r0(n_i)=i(n+r)d_r0(n_0)}, we arrive at $i^2(r-n)(r+3n)d_{r,0}(n,0)=0$, so $d_{r,0}(n,0)=0$, whenever $n\not\in\{r,-\frac r3\}$. Hence, $d_{r,0}(n,i)=0$ for $n\not\in\{r,-\frac r3\}$ by \cref{2ni.d_r0(n_i)=i(n+r)d_r0(n_0),d_r0(0_i)=0-for-r-ne-0}. However, if $n\in\{r,-\frac r3\}$, then $-n\not\in\{r,-\frac r3\}$, so $d_{r,0}(-n,i)=0$, yielding $d_{r,0}(n,0)=0$ thanks to \cref{0=ni.d_r0(-n_i)+i(n+r)d_r0(n_0)}. Thus, 
	\begin{align}\label{d_r0(n_i)=0-if-r-ne-0}
		d_{r,0}(n,i)=0,\mbox{ if }r\ne 0,
	\end{align}
	completing the proof of this case.
	
	\textit{Case 3.} $r=0$ and $s\ne 0$. Taking $i=-s$ in \cref{d_rs(n_i)-in-terms-of-d_rs(0_i)-and-d_rs(n_0),2ni.d_rs(n_0)=...}, we obtain
	\begin{align}
		2n\cdot d_{0,s}(n,-s)&=n\cdot d_{0,s}(n,0),\label{2n.d_0s(n_-s)=n.d_0s(n_0)}\\
		2n\cdot d_{0,s}(n,0)&=n\cdot d_{0,s}(n,s).\label{2n.d_0s(n_0)=n.d_0s(n_s)}
	\end{align}
On the other hand, substituting $i=s$ in \cref{d_rs(n_i)-in-terms-of-d_rs(0_i)-and-d_rs(n_0),2ni.d_rs(n_0)=...}, we get
\begin{align}
	2n\cdot d_{0,s}(n,s)&=2n\cdot d_{0,s}(0,s)+n\cdot d_{0,s}(n,0),\label{2n.d_0s(n_s)=2n.d_0s(0_s)+n.d_0s(n_0)}\\
	2n\cdot d_{0,s}(n,0)&=2n\cdot d_{0,s}(0,s)+n\cdot d_{0,s}(n,-s).\label{2n.d_0s(n_0)=2n.d_0s(0_s)+n.d_0s(n_-s)}
\end{align}
Observe that \cref{2n.d_0s(n_-s)=n.d_0s(n_0),2n.d_0s(n_0)=2n.d_0s(0_s)+n.d_0s(n_-s)} imply that $3n\cdot d_{0,s}(n,0)=4n\cdot d_{0,s}(0,s)$, while \cref{2n.d_0s(n_0)=n.d_0s(n_s),2n.d_0s(n_s)=2n.d_0s(0_s)+n.d_0s(n_0)} yield $3n\cdot d_{0,s}(n,0)=2n\cdot d_{0,s}(0,s)$. So, taking into account \cref{d_rs(0_0)=0-for-(r_s)-ne-(0_0)} as well, we conclude that
\begin{align}\label{d_0s(n_0)=0-if-s-ne-0}
	d_{0,s}(n,0)=0,\mbox{ if }s\ne 0.
\end{align}
Thus, \cref{d_rs(n_i)-in-terms-of-d_rs(0_i)-and-d_rs(n_0),2ni.d_rs(0_i)=...,2ni.d_rs(n_0)=...} reduce to
\begin{align}
	2ni\cdot d_{0,s}(n,i)&=n(i+s)d_{0,s}(0,i),\label{2ni.d_0s(n_i)=n(i+s)d_0s(0_i)}\\
	2ni\cdot d_{0,s}(0,i)&=n(i+s)d_{0,s}(-n,i),\label{2ni.d_0s(0_i)=n(i+s)d_0s(-n_i)}\\
	0&=n(i+s)d_{0,s}(0,i)+ni\cdot d_{0,s}(n,-i).\label{0=n(i+s)d_0s(0_i)+ni.d_0s(n_-i)}
\end{align}
Replacing $n$ by $-n$ in \cref{2ni.d_0s(0_i)=n(i+s)d_0s(-n_i)} and combining it with \cref{2ni.d_0s(n_i)=n(i+s)d_0s(0_i)}, we come to $n(s-i)(s+3i)d_{0,s}(0,i)=0$, whence $d_{0,s}(0,i)=0$ for $i\not\in\{s,-\frac s3\}$. Consequently, $d_{0,s}(n,i)=0$, if $i\not\in\{s,-\frac s3\}$ in view of \cref{d_0s(n_0)=0-if-s-ne-0,2ni.d_0s(n_i)=n(i+s)d_0s(0_i)}. Finally, if $i\in\{s,-\frac s3\}$, then $-i\not\in\{s,-\frac s3\}$, so $d_{0,s}(n,-i)=0$. Hence $d_{0,s}(0,i)=0$ from \cref{0=n(i+s)d_0s(0_i)+ni.d_0s(n_-i)}, and we again have $d_{0,s}(n,i)=0$ thanks to \cref{2ni.d_0s(n_i)=n(i+s)d_0s(0_i)}. Resuming, we have proved
\begin{align}\label{d_0s(n_i)=0-if-s-ne-0}
	d_{0,s}(n,i)=0,\mbox{ if }s\ne 0.
\end{align}

The result now follows from \cref{d_rs(n_i)=0-if-r-ne-0-and-s-ne-0,d_r0(n_i)=0-if-r-ne-0,d_0s(n_i)=0-if-s-ne-0}.
\end{proof}

\begin{lem}\label{d_00(m_i)=d_00(m'_i')}
	Let $\vf$ be a $\frac 12$-derivation of $\B(0)$ satisfying \cref{Dl_rs(L_mi)=d_rs(m_i)L_m+r_i+s}. Then $d_{0,0}(m,i)=d_{0,0}(m',i')$ for all $(m,i),(m',i')\in\Z\times\Z\setminus\{(0,0)\}$.
\end{lem}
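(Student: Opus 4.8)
The plan is to exploit the special form that \cref{one-half-der-in-terms-of-d_rs-for-q=0} takes when $r=s=0$. Setting $r=s=0$ makes both coefficients on the right-hand side equal to $ni-mj$, so for every pair of indices with $ni\ne mj$ we obtain the clean midpoint relation
\begin{align*}
2d_{0,0}(m+n,i+j)=d_{0,0}(m,i)+d_{0,0}(n,j).
\end{align*}
Writing $f(m,i):=d_{0,0}(m,i)$ and regarding indices as vectors $u=(m,i)$, $v=(n,j)$ in $\Z\times\Z$, this says $f(u+v)=\tfrac{1}{2}(f(u)+f(v))$ whenever $u$ and $v$ are linearly independent, i.e. $\det(u,v)=mj-ni\ne0$. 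The entire argument runs through this single relation.

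First I would record that linear independence is preserved by the combinations appearing below: if $mj-ni\ne0$, then each of the pairs $(u,u+v)$, $(v,u+v)$, $(2u+v,v)$ and $(u,u+2v)$ has determinant equal to a nonzero multiple of $mj-ni$, hence is again independent. This lets me apply the midpoint relation repeatedly. Iterating it gives $f(2u+v)=\tfrac{3}{4}f(u)+\tfrac{1}{4}f(v)$ and then $f(2u+2v)=f((2u+v)+v)=\tfrac{3}{8}f(u)+\tfrac{5}{8}f(v)$; running the same chain with the roles of $u$ and $v$ interchanged yields instead $f(2u+2v)=\tfrac{5}{8}f(u)+\tfrac{3}{8}f(v)$. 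Comparing the two expressions for the single value $f(2u+2v)$ forces $f(u)=f(v)$. Thus $d_{0,0}$ takes equal values on any two linearly independent index pairs.

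To finish I would remove the independence hypothesis. Given nonzero $(m,i)$ and $(m',i')$ lying on a common line through the origin, I choose a third index $w\in\Z\times\Z$ off that line (possible because $\Z\times\Z$ is not contained in any line, so at least one of $(1,0),(0,1)$ works). Then $w$ is independent from both $(m,i)$ and $(m',i')$, whence $d_{0,0}(m,i)=d_{0,0}(w)=d_{0,0}(m',i')$ by the previous step. Combined with the independent case, this shows $d_{0,0}$ is constant on $\Z\times\Z\setminus\{(0,0)\}$, as claimed.

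The main obstacle, and the real content of the proof, is the ``doubling'' computation: evaluating the same point $2u+2v$ through two different bracketings and observing that the results must agree. Everything else is bookkeeping. The only point demanding care is to verify that each intermediate pair to which the midpoint relation is applied is genuinely independent, so that the relation is legitimately available at every step.
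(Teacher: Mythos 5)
Your proof is correct, and it takes a mildly but genuinely different route from the paper's. Both arguments begin with the same key observation: for $r=s=0$ and $q=0$, \cref{one-half-der-in-terms-of-d_rs-for-q=0} collapses to the midpoint relation $2d_{0,0}(u+v)=d_{0,0}(u)+d_{0,0}(v)$ whenever $u=(m,i)$ and $v=(n,j)$ satisfy $ni-mj\ne 0$ (this is \cref{2d_00(m+n_i+j)=d_00(m_i)+d_00(n_j)} in the paper). From there the paper specializes to points on the coordinate axes --- substituting $m=j=0$, then $m=-n,\ j=0$, then $m=0,\ j=-i$ --- and solves the resulting small linear system to show first that $d_{0,0}(n,0)=d_{0,0}(0,i)$ for $ni\ne 0$ and then that every off-axis value coincides with this common constant. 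You instead run a coordinate-free ``doubling'' argument, evaluating $d_{0,0}(2u+2v)$ through two different bracketings to force $d_{0,0}(u)=d_{0,0}(v)$ for \emph{every} linearly independent pair at once, and then bridge collinear pairs through an auxiliary vector off their common line. Your determinant bookkeeping is accurate ($\det(u,u+v)=\det(u,v)$, $\det(2u+v,v)=2\det(u,v)$, etc.), so the midpoint relation is legitimately available at each intermediate step, and the two expressions $\tfrac38 f(u)+\tfrac58 f(v)$ and $\tfrac58 f(u)+\tfrac38 f(v)$ for the same value do yield $f(u)=f(v)$. What your approach buys is uniformity and a touch more generality: it treats all independent pairs symmetrically and would transfer to any index group where enough independent pairs exist, whereas the paper's computation is tied to the specific axis substitutions. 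The paper's version, on the other hand, produces along the way the explicit intermediate identities (such as $d_{0,0}(n,0)=d_{0,0}(-n,0)$) that it reuses verbatim in the superalgebra section (\cref{d^1_00(m_i)=d^0_00(m'_i')}), which is a practical advantage in context.
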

\begin{proof}
	Equality \cref{one-half-der-in-terms-of-d_rs-for-q=0} written for $r=s=0$ becomes
	\begin{align*}
	2(ni-mj)d_{0,0}(m+n,i+j)=(ni-mj)(d_{0,0}(m,i)+d_{0,0}(n,j)).
	\end{align*}
	Hence, 
	\begin{align}\label{2d_00(m+n_i+j)=d_00(m_i)+d_00(n_j)}
		2d_{0,0}(m+n,i+j)=d_{0,0}(m,i)+d_{0,0}(n,j),\text{ if }ni-mj\ne 0.
	\end{align}
	In particular, taking $m=j=0$ in \cref{2d_00(m+n_i+j)=d_00(m_i)+d_00(n_j)}, we obtain
	\begin{align}\label{2d_00(n_i)=d_00(0_i)+d_00(n_0)}
	2d_{0,0}(n,i)=d_{0,0}(n,0)+d_{0,0}(0,i),\text{ if }ni\ne 0.
	\end{align}
	On the other hand, taking $m=-n$ and $j=0$ in \cref{2d_00(m+n_i+j)=d_00(m_i)+d_00(n_j)}, we get
	\begin{align}\label{2d_00(0_i)=d_00(-n_i)+d_00(n_0)}
	2d_{0,0}(0,i)=d_{0,0}(n,0)+d_{0,0}(-n,i),\text{ if }ni\ne 0.
	\end{align}
	Similarly, the substitution $j=-i$ and $m=0$ into \cref{2d_00(m+n_i+j)=d_00(m_i)+d_00(n_j)} leads to
	\begin{align}\label{2d_00(n_0)=d_00(0_i)+d_00(n_-i)}
	2d_{0,0}(n,0)=d_{0,0}(0,i)+d_{0,0}(n,-i),\text{ if }ni\ne 0.
	\end{align}
	Now, combining \cref{2d_00(n_i)=d_00(0_i)+d_00(n_0),2d_00(n_0)=d_00(0_i)+d_00(n_-i)} we get
	\begin{align*}
		3d_{0,0}(n,0)=2d_{0,0}(0,i)+d_{0,0}(0,-i),\text{ if }ni\ne 0.
	\end{align*}
	In particular, $d_{0,0}(n,0)=d_{0,0}(-n,0)$ for all $n$. Similarly, \cref{2d_00(n_i)=d_00(0_i)+d_00(n_0),2d_00(0_i)=d_00(-n_i)+d_00(n_0)} yield
	\begin{align*}
		3d_{0,0}(0,i)=2d_{0,0}(n,0)+d_{0,0}(-n,0)=3d_{0,0}(n,0),
	\end{align*}
	i.e.
	\begin{align}\label{d_00(n_0)=d_00(0_i)}
		d_{0,0}(n,0)=d_{0,0}(0,i),\text{ if }ni\ne 0.
	\end{align}
	Then \cref{d_00(n_0)=d_00(0_i),2d_00(n_i)=d_00(0_i)+d_00(n_0)} imply
	\begin{align*}
		d_{0,0}(n,i)=d_{0,0}(n,0)=d_{0,0}(0,i),
	\end{align*}
	which is clearly a constant not depending on $(n,i)$ such that $ni\ne 0$. 
\end{proof}

\begin{prop}\label{Dl(B(0))=<id_af>}
	We have $\Dl(\B(0))=\gen{\id,\af}$, where $\af$ is as in \cref{alpha-half-der}.
\end{prop}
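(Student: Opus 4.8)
The plan is to assemble the proposition directly from the two structural lemmas just established, together with the fact that $\af$ is a $\frac12$-derivation. First I would invoke \cref{d_rs=0-for(r_s)-ne-(0_0)} to cut an arbitrary $\frac12$-derivation $\vf$ of $\B(0)$ down to its diagonal component: since $\vf_{r,s}=0$ for every $(r,s)\ne(0,0)$, we have $\vf=\vf_{0,0}$, and hence $\vf(L_{m,i})=d_{0,0}(m,i)L_{m,i}$ for all $m,i\in\Z$.

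Next I would apply \cref{d_00(m_i)=d_00(m'_i')}, which says that $d_{0,0}$ takes a single common value, call it $c\in\C$, on every index pair $(m,i)\ne(0,0)$. Writing $c'=d_{0,0}(0,0)$ for the one remaining, a priori unconstrained, scalar, the map $\vf$ acts as multiplication by $c$ on the span of all $L_{m,i}$ with $(m,i)\ne(0,0)$ and as multiplication by $c'$ on $\C L_{0,0}$. The decisive observation is that this is exactly the combination
\begin{align*}
\vf=c\,\id+(c'-c)\,\af,
\end{align*}
where $\af$ is the map from \cref{alpha-half-der} specialized to $q=0$, so that $\af(L_{0,0})=L_{0,0}$ and $\af$ annihilates every other basis vector. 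Indeed, evaluating the right-hand side on $L_{0,0}$ gives $cL_{0,0}+(c'-c)L_{0,0}=c'L_{0,0}$, while on any $L_{m,i}$ with $(m,i)\ne(0,0)$ it gives $cL_{m,i}$, matching $\vf$ on the whole basis. This proves the inclusion $\Dl(\B(0))\sst\gen{\id,\af}$.

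For the reverse inclusion I would note that $\id$ is always a (trivial) $\frac12$-derivation and that $\af$ is a $\frac12$-derivation by \cref{alpha-half-der}; since the $\frac12$-derivations form a linear subspace of $\mathrm{End}(\B(0))$, we obtain $\gen{\id,\af}\sst\Dl(\B(0))$, and hence equality. To see that the space is genuinely two-dimensional rather than collapsing, I would check that $\id$ and $\af$ are linearly independent: if $a\,\id+b\,\af=0$, then evaluating on $L_{1,0}$ forces $a=0$, and then evaluating on $L_{0,0}$ forces $b=0$.

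There is essentially no hard step here, since the whole computational content has already been packaged into \cref{d_rs=0-for(r_s)-ne-(0_0),d_00(m_i)=d_00(m'_i')}. The only point requiring any care is the bookkeeping at the exceptional index $(0,0)$: one must recognize that the single degree of freedom $d_{0,0}(0,0)$ left unpinned by \cref{d_00(m_i)=d_00(m'_i')} is precisely what the extra generator $\af$ supplies, so that the final answer is two-dimensional rather than one-dimensional as in the generic $q\ne 0$ case.
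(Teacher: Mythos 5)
Your proof is correct and takes essentially the same route as the paper's: both obtain the inclusion $\Dl(\B(0))\sst\gen{\id,\af}$ from \cref{d_rs=0-for(r_s)-ne-(0_0),d_00(m_i)=d_00(m'_i')} and the reverse inclusion from \cref{alpha-half-der}. You simply make explicit the decomposition $\vf=c\,\id+(c'-c)\af$ and the linear independence of $\id$ and $\af$, which the paper leaves implicit.
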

\begin{proof}
	It follows from \cref{d_00(m_i)=d_00(m'_i'),d_rs=0-for(r_s)-ne-(0_0)} that $\Dl(\B(0))\sst\gen{\id,\af}$. Conversely, any element of $\gen{\id,\af}$ is a $\frac 12$-derivation of $\B(0)$ by \cref{alpha-half-der}.
\end{proof}

We may thus join \cref{Dl(B(0))=<id_af>,Dl(B(q))=<id_af>} to get the following theorem.

\begin{thrm}\label{Dl(B(q))-full-description}
	For all $q\in\C$ we have
	\begin{align*}
	\Dl(\B(q))=
	\begin{cases}
	\gen{\id}, & q\not\in\Z,\\
	\gen{\id,\af}, & q\in\Z.
	\end{cases}
	\end{align*}
\end{thrm}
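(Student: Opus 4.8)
The plan is to observe that the statement is an immediate consequence of the two preceding propositions, which together exhaust all values of $q\in\C$. First I would note that \cref{Dl(B(q))=<id_af>} already establishes the result for every $q\ne 0$: it gives $\Dl(\B(q))=\gen{\id}$ when $q\not\in\Z$ (observing that $q\not\in\Z$ forces $q\ne 0$) and $\Dl(\B(q))=\gen{\id,\af}$ when $q\in\Z\setminus\{0\}$. The only value left uncovered is $q=0$, which is precisely the content of \cref{Dl(B(0))=<id_af>}, yielding $\Dl(\B(0))=\gen{\id,\af}$.

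Then I would reorganize these cases according to membership in $\Z$ rather than according to whether $q$ vanishes. If $q\not\in\Z$, only the first branch of \cref{Dl(B(q))=<id_af>} applies, giving $\gen{\id}$. If $q\in\Z$, then either $q\in\Z\setminus\{0\}$, handled by the second branch of \cref{Dl(B(q))=<id_af>}, or $q=0$, handled by \cref{Dl(B(0))=<id_af>}; in both subcases the answer is $\gen{\id,\af}$. This reindexing matches the claimed formula exactly.

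Since the substantive analytic work---extracting the precise structure of the $\frac 12$-derivations from the defining functional equation \cref{one-half-der-in-terms-of-d_rs} via the graded decomposition $\vf=\sum_{r,s}\vf_{r,s}$---has already been carried out in the lemmas leading up to these propositions, there is no genuine obstacle at this stage. The theorem is simply a bookkeeping merge of the $q=0$ and $q\ne 0$ analyses into a single statement indexed by whether or not $q$ is an integer. If anything required care, it would be only the trivial logical check that the two propositions' hypotheses ($q\ne 0$ and $q=0$) partition $\C$ while their branch conditions ($q\not\in\Z$ versus $q\in\Z$) recombine cleanly, but this is routine.
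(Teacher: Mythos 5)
Your proposal is correct and matches the paper exactly: the theorem is stated there as a direct merge of \cref{Dl(B(q))=<id_af>} (covering $q\ne 0$) and \cref{Dl(B(0))=<id_af>} (covering $q=0$), reorganized by whether $q\in\Z$. No further argument is needed.
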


Filippov proved that each nonzero $\delta$-derivation ($\delta\neq0,1$) of a Lie algebra, 
 gives a non-trivial ${\rm Hom}$-Lie algebra structure \cite[Theorem 1]{fil1}.
 Hence, by Theorem \ref{Dl(B(q))-full-description}, we have the following corollary.

 \begin{cor}
The Lie algebra $\B(q)_{q\in\Z}$ admits a non-trivial ${\rm Hom}$-Lie algebra structure.
 \end{cor}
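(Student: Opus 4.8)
The plan is to apply Filippov's theorem directly to the non-trivial $\frac12$-derivation produced in the preceding results. First I would invoke \cref{Dl(B(q))-full-description}: for $q\in\Z$ it gives $\Dl(\B(q))=\gen{\id,\af}$, so in particular $\af$ is a $\frac12$-derivation of $\B(q)$. I would then observe that $\af$ is non-trivial in the sense fixed after \cref{glavlem}, i.e.\ it is not a scalar multiple of $\id$: by \cref{alpha-half-der} it sends $L_{0,-2q}\mapsto L_{0,-q}$ and annihilates every other basis vector, so it is neither the zero map nor proportional to the identity.

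Next I would recall that a $\frac12$-derivation is precisely a $\delta$-derivation with $\delta=\frac12$, and that $\frac12\notin\{0,1\}$. Filippov's result \cite[Theorem 1]{fil1} asserts that any nonzero $\delta$-derivation of a Lie algebra with $\delta\ne0,1$ gives rise to a non-trivial ${\rm Hom}$-Lie algebra structure on that Lie algebra, with the $\delta$-derivation itself (suitably rescaled) serving as the structure map $\vf$. Feeding $\af$ into this construction yields the required non-trivial ${\rm Hom}$-Lie structure on $\B(q)$ for every $q\in\Z$.

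Since the argument is a direct application of two already-established facts, there is essentially no obstacle to overcome; the only point that needs a moment's care is checking that $\af$ is genuinely non-trivial (not a scalar), so that Filippov's hypotheses $\af\ne0$ and $\delta\ne0,1$ are met in a meaningful way and the resulting ${\rm Hom}$-Lie structure is distinct from the trivial one coming from $\id$.
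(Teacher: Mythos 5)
Your proposal is correct and follows exactly the route the paper takes: it cites Filippov's theorem that a nonzero $\delta$-derivation with $\delta\ne 0,1$ yields a non-trivial ${\rm Hom}$-Lie structure, and feeds in the non-scalar $\frac 12$-derivation $\af$ guaranteed by \cref{Dl(B(q))-full-description} for $q\in\Z$. Your extra remark that one should check $\af$ is not a multiple of $\id$ is a sensible (and correct) point of care, but otherwise there is nothing to add.
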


\subsection{Transposed Poisson structures on $\B(q)$}\label{tpa-alg}

Using \cref{Dl(B(q))-full-description} we can describe all the transposed Poisson structures on $(\B(q),[\cdot,\cdot])$.
\begin{thrm}\label{thalg}
	If $q\not\in\Z$, then all the transposed Poisson structures on $(\B(q),[\cdot,\cdot])$ are trivial. If $q\in\Z$, then, up to an isomorphism, there is only one non-trivial transposed Poisson structure $\cdot$ on $(\B(q),[\cdot,\cdot])$ given by
	\begin{align}\label{L_0-2q.L_0-2q=L_0-q}
		L_{0,-2q}\cdot L_{0,-2q}=L_{0,-q}.
	\end{align}
\end{thrm}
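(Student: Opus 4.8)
The plan is to read off everything from the complete description of $\frac12$-derivations in \cref{Dl(B(q))-full-description}, combined with the key \cref{glavlem}. The case $q\notin\Z$ is immediate: there $\Dl(\B(q))=\gen{\id}$ contains only trivial $\frac12$-derivations, so \cref{princth} applies verbatim and every transposed Poisson structure on $\B(q)$ is trivial. The whole content of the theorem therefore lies in the case $q\in\Z$.

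For $q\in\Z$ I would proceed as follows. If $\cdot$ is a transposed Poisson structure, then by \cref{glavlem} each left multiplication $L_z$ is a $\frac12$-derivation, hence $L_{L_{m,i}}\in\gen{\id,\af}$ by \cref{Dl(B(q))-full-description}. Writing $L_{L_{m,i}}=\lambda_{m,i}\id+\mu_{m,i}\af$, so that $L_{m,i}\cdot L_{n,j}=\lambda_{m,i}L_{n,j}+\mu_{m,i}\af(L_{n,j})$, I would feed in the explicit action of $\af$ from \cref{af(L_mi)=0-or-L_0-q} (it kills every basis vector except $L_{0,-2q}$, which it sends to $L_{0,-q}$) together with the commutativity of $\cdot$. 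Comparing $L_{m,i}\cdot L_{n,j}$ with $L_{n,j}\cdot L_{m,i}$ for distinct pairs with $(m,i),(n,j)\ne(0,-2q)$ and invoking linear independence of distinct basis vectors forces $\lambda_{m,i}=0$ whenever $(m,i)\ne(0,-2q)$; the products with exactly one factor equal to $L_{0,-2q}$ then force $\lambda_{0,-2q}=0$ and $\mu_{m,i}=0$ for all $(m,i)\ne(0,-2q)$. Consequently every product vanishes except $L_{0,-2q}\cdot L_{0,-2q}=c\,L_{0,-q}$, where $c=\mu_{0,-2q}$. (When $q=0$ the indices $-2q$ and $-q$ collapse to $0$ and $\af$ fixes $L_{0,0}$, but the same bookkeeping goes through.)

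Conversely, I would verify that this multiplication is a genuine transposed Poisson structure for every $c$: commutativity holds by construction, associativity is checked directly since every triple product is either zero or reduces to the single nonzero product, and the compatibility \cref{Trans-Leibniz} is automatic because each $L_z$ is a scalar multiple of $\af$, which is a $\frac12$-derivation by \cref{alpha-half-der}. If $c=0$ the structure is trivial, so it remains to treat $c\ne0$ and to show that all such structures are isomorphic to the one in \cref{L_0-2q.L_0-2q=L_0-q}.

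The final and most delicate step is the normalization of $c$ to $1$ by an automorphism of $\B(q)$, using that an automorphism transports one transposed Poisson structure to another. For $q\ne0$ I would use the diagonal automorphism $L_{m,i}\mapsto a^mb^iL_{m,i}$ (directly checked to preserve the bracket): it rescales the structure constant by $b^{3q}$, and since $\C$ is algebraically closed and $3q\ne0$ one solves $b^{3q}=c^{-1}$. The subtle point, and the main obstacle, is $q=0$: there this diagonal automorphism fixes $L_{0,0}$ and cannot change $c$. Here I would instead exploit that $L_{0,0}$ is central in $\B(0)$, so the map scaling $L_{0,0}$ by $\gamma$ and fixing all other basis vectors is an automorphism; it rescales $c$ by $\gamma^{-1}$, and choosing $\gamma=c$ normalizes $c$ to $1$. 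This completes the identification of the unique non-trivial structure.
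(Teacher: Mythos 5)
Your proposal is correct and follows essentially the same route as the paper: reduce to the classification $\Dl(\B(q))=\gen{\id,\af}$ via \cref{glavlem}, use commutativity of $\cdot$ on pairs of basis vectors to kill everything except $L_{0,-2q}\cdot L_{0,-2q}=cL_{0,-q}$, and normalize $c$ by an automorphism. The only (harmless) deviations are cosmetic: for the normalization at $q\ne 0$ you use the diagonal automorphism $L_{m,i}\mapsto a^mb^iL_{m,i}$ where the paper simply rescales the central element $L_{0,-q}$, and for the converse you observe that every $L_z$ is a multiple of the $\frac12$-derivation $\af$ (which is exactly equivalent to \cref{Trans-Leibniz}) where the paper checks that both sides of \cref{Trans-Leibniz} vanish.
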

\begin{proof}
	Let $(\B(q),\cdot,[\cdot,\cdot])$ be a transposed Poisson algebra, so that $(\B(q),\cdot)$ is commutative and \cref{Trans-Leibniz} holds. For any $(m,i)\in\Z\times\Z$ denote by $\vf^{m,i}$ the left multiplication by $L_{m,i}$ in $(\B(q),\cdot)$, so that
	\begin{align}\label{L_mi.L_nj=vf^mi(L_nj)}
	L_{m,i}\cdot L_{n,j}=\vf^{m,i}(L_{n,j}).
	\end{align}
	Since $(\B(q),\cdot)$ is commutative, we also have
	\begin{align}\label{L_mi.L_nj=vf^nj(L_mi)}
	L_{m,i}\cdot L_{n,j}=L_{n,j}\cdot L_{m,i}=\vf^{n,j}(L_{m,i}).
	\end{align}
	
	By \cref{Trans-Leibniz} we have $\vf^{m,i}\in\Dl(\B(q))$. If $q\not\in\Z$, then it follows from \cref{Dl(B(q))-full-description} that $\vf^{m,i}=a^{m,i}\id$ for some $a^{m,i}\in\C$. It is then an immediate consequence of \cref{L_mi.L_nj=vf^mi(L_nj),L_mi.L_nj=vf^nj(L_mi)} that $a^{m,i}=0$ for all $(m,i)\in\Z\times\Z$. Thus, the product $\cdot$ is trivial for $q\not\in\Z$.
	
	So, we will focus on the case $q\in\Z$. Write, using \cref{Dl(B(q))-full-description}, 
	\begin{align}\label{vf^mi=a^mi.id+b^mi.af}
	\vf^{m,i}=a^{m,i}\id+b^{m,i}\af,
	\end{align}
	where $a^{m,i},b^{m,i}\in\C$ and $\af$ is given by \cref{af(L_mi)=0-or-L_0-q}. On the one hand, by \cref{L_mi.L_nj=vf^mi(L_nj),af(L_mi)=0-or-L_0-q,vf^mi=a^mi.id+b^mi.af},
	\begin{align}\label{L_mi.L_nj=a^mi.L_0-2q+b^mi.L_0-q}
	L_{m,i}\cdot L_{n,j}=a^{m,i}L_{n,j}+b^{m,i}\af(L_{n,j})=
	\begin{cases}
	a^{m,i}L_{n,j}, & (n,j)\ne(0,-2q),\\
	a^{m,i}L_{0,-2q}+b^{m,i}L_{0,-q}, & (n,j)=(0,-2q).
	\end{cases}
	\end{align}
	On the other hand, by \cref{L_mi.L_nj=vf^nj(L_mi),af(L_mi)=0-or-L_0-q,vf^mi=a^mi.id+b^mi.af},
	\begin{align}\label{L_mi.L_nj=a^nj.L_0-2q+b^nj.L_0-q}
	L_{m,i}\cdot L_{n,j}=a^{n,j}L_{m,i}+b^{n,j}\af(L_{m,i})=
	\begin{cases}
	a^{n,j}L_{m,i}, & (m,i)\ne(0,-2q),\\
	a^{n,j}L_{0,-2q}+b^{n,j}L_{0,-q}, & (m,i)=(0,-2q).
	\end{cases}
	\end{align}
	
	\textit{Case 1.} $(m,i),(n,j)\ne(0,-2q)$. Then $a^{m,i}L_{n,j}=a^{n,j}L_{m,i}$ by \cref{L_mi.L_nj=a^mi.L_0-2q+b^mi.L_0-q,L_mi.L_nj=a^nj.L_0-2q+b^nj.L_0-q}, so taking $(m,i)\ne(n,j)$ we conclude that $a^{m,i}=a^{n,j}=0$. Thus, $L_{m,i}\cdot L_{n,j}=0$.
	
	\textit{Case 2.} $(m,i)=(0,-2q)$, $(n,j)\ne(0,-2q)$. Then $a^{0,-2q}L_{n,j}=a^{n,j}L_{0,-2q}+b^{n,j}L_{0,-q}$ by \cref{L_mi.L_nj=a^mi.L_0-2q+b^mi.L_0-q,L_mi.L_nj=a^nj.L_0-2q+b^nj.L_0-q}. Choosing $(n,j)\ne(0,-q)$, we obtain $a^{0,-2q}=0$, so $L_{m,i}\cdot L_{n,j}=0$.
	
	\textit{Case 3.} $(m,i)\ne(0,-2q)$, $(n,j)=(0,-2q)$. This case is symmetric to Case 2, so again $L_{m,i}\cdot L_{n,j}=0$.
	
	\textit{Case 4.} $(m,i)=(n,j)=(0,-2q)$. Then $L_{m,i}\cdot L_{n,j}=a^{0,-2q}L_{0,-2q}+b^{0,-2q}L_{0,-q}=b^{0,-2q}L_{0,-q}$, because $a^{0,-2q}=0$, as proved in Case 2.
	
	Thus, the product in $(\B(q),\cdot)$ is of the form
	\begin{align}\label{L_0-2q.L_0-2q=cL_0-q}
		L_{0,-2q}\cdot L_{0,-2q}=cL_{0,-q},
	\end{align}
	where $c\in\C$. Assume that $c\ne 0$ (otherwise the transposed Poisson structure is trivial). Observe that $L_{0,-q}\in Z(\B(q))$, where $Z(\B(q))$ is the center of the Lie algebra $(\B(q),[\cdot,\cdot])$. Indeed,
	\begin{align*}
		[L_{m,i}, L_{0,-q}] = (0\cdot (i + q) - m(-q + q))L_{m+n,i+j}=0
	\end{align*}
	for all $(m,i)\in\Z\times\Z$. Hence the linear map $\phi$ such that $\phi(L_{m,i})=L_{m,i}$ for $(m,i)\ne(0,-q)$ and $\phi(L_{0,-q})=kL_{0,-q}$ is an automorphism of $(\B(q),[\cdot,\cdot])$ for any $k\in\C^*$. If $q\ne 0$, then taking $k=c^{-1}$, we obtain an isomorphic transposed Poisson structure $*$ on $(\B(q),[\cdot,\cdot])$ in which the only non-zero product is 
	\begin{align*}
	    L_{0,-2q}*L_{0,-2q}=\phi(L_{0,-2q})*\phi(L_{0,-2q})=\phi(L_{0,-2q}\cdot L_{0,-2q})=\phi(cL_{0,-q})=c\cdot c^{-1} L_{0,-q}=L_{0,-q},
	\end{align*}
	so, up to an isomorphism, we may consider $c=1$ in \cref{L_0-2q.L_0-2q=cL_0-q}. If $q=0$, then taking $k=c$, we obtain an isomorphic transposed Poisson structure $*$ on $(\B(q),[\cdot,\cdot])$ in which the only non-zero product is 
	\begin{align*}
	    L_{0,0}*L_{0,0}=c^{-1}\phi(L_{0,0})*c^{-1}\phi(L_{0,0})=c^{-2}\phi(L_{0,0}\cdot L_{0,0})=c^{-2}\phi(cL_{0,0})=c^{-2}\cdot c^2 L_{0,0}=L_{0,0},
	\end{align*}
	so again, up to an isomorphism, it suffices to take $c=1$ in \cref{L_0-2q.L_0-2q=cL_0-q}.
	
	Conversely, consider the commutative algebra structure on $\B(q)$ given by \cref{L_0-2q.L_0-2q=L_0-q}. It is clearly associative. To prove \cref{Trans-Leibniz}, observe that $\B(q)\cdot\B(q)\sst Z(\B(q))$. Hence, the right-hand side of \cref{Trans-Leibniz} is always zero. In fact, the left-hand side of \cref{Trans-Leibniz} is zero as well, because $[\B(q),\B(q)]\sst\Ann(\B(q))$, where $\Ann(\B(q))$ is the annihilator of $(\B(q),\cdot)$. For, assuming $[L_{m,i}, L_{n,j}]\in\gen{L_{0,-2q}}$ we obtain from \cref{[L_mi_L_nj]=(n(i+q)-m(j+q))L_m+n_i+j} that $m+n=0$ and $i+j=-2q$. But then
	\begin{align*}
		n(i+q)-m(j+q)=n(i+q)+n(-i-2q+q)=n(i+q)-n(i+q)=0,
	\end{align*}
	so $[L_{m,i}, L_{n,j}]=0$. Thus, $[L_{m,i}, L_{n,j}]\in\Ann(\B(q))$ for all $(m,i),(n,j)\in\Z\times\Z$, as needed.
\end{proof}

	\section{Transposed Poisson structures on Block Lie superalgebras}
	
	We are going to study the Lie superalgebra $\S(q)$ whose definition comes from \cite{xia16} with the only difference that $q$ will be an arbitrary complex number and the set of indices of the basis elements will be the whole $\Z\times\Z$. Thus, $\S(q)_0=\B(q)$ with basis $\{L_{m,i} \mid m, i \in \Z\}$ and multiplication given by \cref{[L_mi_L_nj]=(n(i+q)-m(j+q))L_m+n_i+j}. Now, $\S(q)_1$ is spanned by $\{G_{m,i} \mid m, i \in \Z\}$, where
	\begin{align}
		[L_{m,i},G_{n,j}]&=\left(n(i+q)-m\left(j+\frac q2\right)\right)G_{m+n,i+j},\label{[L_mi_G_nj]}\\
		[G_{m,i},G_{n,j}]&=2qL_{m+n,i+j}.\label{[G_mi_G_nj]}
	\end{align}
	
	\subsection{Even $\frac 12$-derivations of $\S(q)$}
	
	
	In this subsection we consider only \textit{even} linear maps $\vf:\S(q)\to\S(q)$, i.e. those which satisfy $\vf(\S(q)_i)\sst \S(q)_i$ for $i\in\{0,1\}$. We thus have $|\vf|=0$, so $\vf$ is a $\frac 12$-superderivation of $\S(q)$ if and only if $\vf$ is a usual $\frac 12$-derivation of $\S(q)$. 
	We now write 
	\begin{align*}
	\vf=\sum_{r,s\in\Z}\vf_{r,s},
	\end{align*}
	where 
	\begin{align}
	\vf_{r,s}(L_{m,i})=d^0_{r,s}(m,i)L_{m+r,i+s},\label{vf_rs(L_mi)=d^0_rs(m_i)L_m+r_i+s}\\
	\vf_{r,s}(G_{m,i})=d^1_{r,s}(m,i)G_{m+r,i+s}\label{vf_rs(G_mi)=d^1_rs(m_i)G_m+r_i+s}
	\end{align}
	for some $d^i_{r,s}(m,i)\in\C$, $i=0,1$. Since 
	\begin{align*}
		[(\S(q)_k)_{m,i},(\S(q)_l)_{n,j}]\sst(\S(q)_{k+l})_{m+n,i+j}
	\end{align*}
	for all $k,l\in\Z_2$ and $m,n,i,j\in\Z$ by \cref{[L_mi_G_nj],[G_mi_G_nj]}, then $\vf\in\Dl^0(\S(q))$ if and only if $\vf_{r,s}\in\Dl^0(\S(q))$ for all $r,s\in\Z$.
	
	\begin{lem}\label{conditions-on-d^0-and-d^1}
		Let $\vf_{r,s}:\S(q)\to\S(q)$ be a linear map satisfying \cref{vf_rs(L_mi)=d^0_rs(m_i)L_m+r_i+s,vf_rs(G_mi)=d^1_rs(m_i)G_m+r_i+s}. 
		Then $\vf_{r,s}\in\Dl^0(\S(q))$ if and only if $\vf_{r,s}|_{\S(q)_0}\in\Dl(\B(q))$ and
		\begin{align}
		2\left(n(i+q)-m\left(j+\frac q2\right)\right)d^1_{r,s}(m+n,i+j)&=\left(n(i+s+q)-(m+r)\left(j+\frac q2\right)\right)d^0_{r,s}(m,i)\notag\\
		&\quad+\left((n+r)(i+q)-m\left(j+s+\frac q2\right)\right)d^1_{r,s}(n,j),\label{one-half-der-in-terms-of-d^0_rs-and-d^1_rs}\\
		2qd^0_{r,s}(m+n,i+j)&=q(d^1_{r,s}(m,i)+d^1_{r,s}(n,j)).\label{2qd^0_rs(m+n_i+j)=q(d^1_rs(m_i)+d^1_rs(n_j))}
		\end{align}
	\end{lem}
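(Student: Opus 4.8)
The plan is to prove Lemma \ref{conditions-on-d^0-and-d^1} by the same mechanism used for Lemma \ref{conditions-on-d}: expand the $\frac 12$-derivation identity \cref{vf(xy)=half(vf(x)y+xvf(y))} on homogeneous basis elements, and collect the resulting scalar conditions grade by grade. Since $\vf_{r,s}$ is even, being a $\frac 12$-superderivation is equivalent to being an ordinary $\frac 12$-derivation (as noted just before the lemma), so I need only test the identity on all pairs of basis vectors drawn from $\{L_{m,i}\}\cup\{G_{m,i}\}$. There are three essentially distinct types of pairs: $(L,L)$, $(L,G)$, and $(G,G)$; the pair $(G,L)$ is covered by $(L,G)$ via antisymmetry of the bracket together with the oddness of $G$. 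My goal is to show that the $(L,L)$-pairs reproduce exactly the condition that $\vf_{r,s}|_{\S(q)_0}$ is a $\frac 12$-derivation of $\B(q)$, that the $(L,G)$-pairs give \cref{one-half-der-in-terms-of-d^0_rs-and-d^1_rs}, and that the $(G,G)$-pairs give \cref{2qd^0_rs(m+n_i+j)=q(d^1_rs(m_i)+d^1_rs(n_j))}.

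\textbf{The three computations.} First, applying \cref{vf(xy)=half(vf(x)y+xvf(y))} to $x=L_{m,i}$, $y=L_{n,j}$ uses only the $\S(q)_0=\B(q)$ bracket \cref{[L_mi_L_nj]=(n(i+q)-m(j+q))L_m+n_i+j} and the action of $\vf_{r,s}$ via \cref{vf_rs(L_mi)=d^0_rs(m_i)L_m+r_i+s}; this is literally the computation already carried out in the proof of \cref{conditions-on-d}, so it yields precisely the statement $\vf_{r,s}|_{\S(q)_0}\in\Dl(\B(q))$, i.e.\ \cref{one-half-der-in-terms-of-d_rs} with $d_{r,s}$ replaced by $d^0_{r,s}$. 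Second, taking $x=L_{m,i}$, $y=G_{n,j}$, I expand both sides: the left-hand side uses \cref{[L_mi_G_nj]} to bracket $L$ with $G$ and then \cref{vf_rs(G_mi)=d^1_rs(m_i)G_m+r_i+s} to apply $\vf_{r,s}$, producing a multiple of $G_{m+n+r,i+j+s}$ with coefficient $2(n(i+q)-m(j+\frac q2))d^1_{r,s}(m+n,i+j)$; the right-hand side distributes $\vf_{r,s}$ across the two arguments, giving a bracket of $\vf_{r,s}(L_{m,i})=d^0_{r,s}(m,i)L_{m+r,i+s}$ with $G_{n,j}$ plus a bracket of $L_{m,i}$ with $\vf_{r,s}(G_{n,j})=d^1_{r,s}(n,j)G_{n+r,j+s}$, each re-evaluated through \cref{[L_mi_G_nj]} with the shifted indices. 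Matching the $G_{m+n+r,i+j+s}$ coefficients gives exactly \cref{one-half-der-in-terms-of-d^0_rs-and-d^1_rs}. Third, with $x=G_{m,i}$, $y=G_{n,j}$, the left side uses \cref{[G_mi_G_nj]} to get $2qL_{m+n,i+j}$ and then \cref{vf_rs(L_mi)=d^0_rs(m_i)L_m+r_i+s}, yielding $2q\,d^0_{r,s}(m+n,i+j)L_{m+n+r,i+j+s}$; the right side, using evenness of $\vf_{r,s}$ (so $(-1)^{|\vf||x|}=1$) and applying \cref{vf_rs(G_mi)=d^1_rs(m_i)G_m+r_i+s} then \cref{[G_mi_G_nj]}, produces $q(d^1_{r,s}(m,i)+d^1_{r,s}(n,j))L_{m+n+r,i+j+s}$. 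Equating coefficients of $L_{m+n+r,i+j+s}$ gives \cref{2qd^0_rs(m+n_i+j)=q(d^1_rs(m_i)+d^1_rs(n_j))}.

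\textbf{Main obstacle and bookkeeping.} The only genuinely delicate point is the careful recomputation of the mixed bracket \cref{[L_mi_G_nj]} after the index shifts introduced by $\vf_{r,s}$: when I bracket $d^0_{r,s}(m,i)L_{m+r,i+s}$ with $G_{n,j}$ I must substitute $(m+r,i+s)$ for $(m,i)$ in the structure coefficient of \cref{[L_mi_G_nj]}, obtaining a factor $n(i+s+q)-(m+r)(j+\frac q2)$, and likewise bracketing $L_{m,i}$ with $d^1_{r,s}(n,j)G_{n+r,j+s}$ gives $(n+r)(i+q)-m(j+s+\frac q2)$; getting every occurrence of the $\frac q2$ versus $q$ and the correct variable in each slot exactly right is where an error would creep in, but it is otherwise a direct substitution. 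Because the bracket is antisymmetric, I should also verify that testing the identity on $(G_{m,i},L_{n,j})$ produces no new condition beyond \cref{one-half-der-in-terms-of-d^0_rs-and-d^1_rs}; this follows because both sides of \cref{vf(xy)=half(vf(x)y+xvf(y))} simply change sign under swapping the two arguments. Assembling the three matched coefficient identities and noting that each holds for all $i,j,m,n\in\Z$ if and only if the $(L,L)$-part, \cref{one-half-der-in-terms-of-d^0_rs-and-d^1_rs}, and \cref{2qd^0_rs(m+n_i+j)=q(d^1_rs(m_i)+d^1_rs(n_j))} all hold, completes the equivalence.
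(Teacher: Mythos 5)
Your proposal is correct and follows essentially the same route as the paper: test the $\frac12$-derivation identity on the pairs $(L,L)$, $(L,G)$, $(G,G)$ of basis vectors, use \cref{[L_mi_L_nj]=(n(i+q)-m(j+q))L_m+n_i+j,[L_mi_G_nj],[G_mi_G_nj]} together with \cref{vf_rs(L_mi)=d^0_rs(m_i)L_m+r_i+s,vf_rs(G_mi)=d^1_rs(m_i)G_m+r_i+s}, and match coefficients of the single basis vector in grade $(m+n+r,i+j+s)$ to obtain, respectively, the condition $\vf_{r,s}|_{\S(q)_0}\in\Dl(\B(q))$ and the two displayed identities. The index-shift bookkeeping you flag is handled exactly as you describe, and your observation that the $(G,L)$ pairs give nothing new is correct.
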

\begin{proof}
	By \cref{[L_mi_G_nj]} we see that $\vf_{r,s}\in\Dl^0(\S(q))$ if and only if
	\begin{align}
		2\left(n(i+q)-m\left(j+\frac q2\right)\right)\vf_{r,s}(G_{m+n,i+j})=[\vf_{r,s}(L_{m,i}),G_{n,j}]+[L_{m,i},\vf_{r,s}(G_{n,j})],\label{2(n(i+q)-m(j+q-over-2))vf_rs(G_m+n_i+j)=[L_mi_vf_rs(G_nj)]}\\
		4q\vf_{r,s}(L_{m+n,i+j})=[\vf_{r,s}(G_{m,i}),G_{n,j}]+[G_{m,i},\vf_{r,s}(G_{n,j})].\label{4qvf_rs(L_m+n_i+j)=[vf_rs(G_mi)_G_nj]+[G_mi_vf_rs(G_nj)]}
	\end{align}
	In view of \cref{vf_rs(L_mi)=d^0_rs(m_i)L_m+r_i+s,vf_rs(G_mi)=d^1_rs(m_i)G_m+r_i+s} the left-hand side of \cref{2(n(i+q)-m(j+q-over-2))vf_rs(G_m+n_i+j)=[L_mi_vf_rs(G_nj)]} equals
	\begin{align*}
	2\left(n(i+q)-m\left(j+\frac q2\right)\right)d^1_{r,s}(m+n,i+j)G_{m+n+r,i+j+s},
	\end{align*}
	while the right-hand side of \cref{2(n(i+q)-m(j+q-over-2))vf_rs(G_m+n_i+j)=[L_mi_vf_rs(G_nj)]} is
	\begin{align*}
		&[d^0_{r,s}(m,i)L_{m+r,i+s},G_{n,j}]+[L_{m,i},d^1_{r,s}(n,j)G_{n+r,j+s}]\\
		&\quad=\left(n(i+s+q)-(m+r)\left(j+\frac q2\right)\right)d^0_{r,s}(m,i)G_{m+n+r,i+j+s}\\
		&\quad\quad+\left((n+r)(i+q)-m\left(j+s+\frac q2\right)\right)d^1_{r,s}(n,j)G_{m+n+r,i+j+s}.
	\end{align*}
	Thus, we come to \cref{one-half-der-in-terms-of-d^0_rs-and-d^1_rs}. Now, the left-hand side of \cref{4qvf_rs(L_m+n_i+j)=[vf_rs(G_mi)_G_nj]+[G_mi_vf_rs(G_nj)]} is
	\begin{align*}
		4qd^0_{r,s}(m+n,i+j)L_{m+n+r,i+j+s},
	\end{align*}
	while the right-hand side of \cref{4qvf_rs(L_m+n_i+j)=[vf_rs(G_mi)_G_nj]+[G_mi_vf_rs(G_nj)]} equals
	\begin{align*}
		&[d^1_{r,s}(m,i)G_{m+r,i+s},G_{n,j}]+[G_{m,i},d^1_{r,s}(n,j)G_{n+r,j+s}]\\
		&\quad=2qd^1_{r,s}(m,i)L_{m+n+r,i+j+s}+2qd^1_{r,s}(n,j)L_{m+n+r,i+j+s},
	\end{align*}
	whence \cref{2qd^0_rs(m+n_i+j)=q(d^1_rs(m_i)+d^1_rs(n_j))}.
\end{proof}

We are going to specify the result of \cref{conditions-on-d^0-and-d^1} taking into account the description of $\Dl(\B(q))$ from \cref{Dl(B(q))-full-description}.

	\begin{lem}\label{conditions-on-d^0-and-d^1-specified}
	Let $\vf_{r,s}:\S(q)\to\S(q)$ be a linear map satisfying \cref{vf_rs(L_mi)=d^0_rs(m_i)L_m+r_i+s,vf_rs(G_mi)=d^1_rs(m_i)G_m+r_i+s} and such that $\vf_{r,s}|_{\S(q)_0}\in\Dl(\B(q))$. 
	
	If $q\not\in\Z$ or $q\in\Z$ and $(r,s)\not\in\{(0,0),(0,q)\}$, then $\vf_{r,s}\in\Dl^0(\S(q))$ if and only if
	\begin{align}
	2\left(n(i+q)-m\left(j+\frac q2\right)\right)d^1_{r,s}(m+n,i+j)&=\left((n+r)(i+q)-m\left(j+s+\frac q2\right)\right)d^1_{r,s}(n,j),\label{one-half-der-in-terms-of-d^1_rs}\\
	q(d^1_{r,s}(m,i)+d^1_{r,s}(n,j))&=0.\label{q(d^1_rs(m_i)+d^1_rs(n_j))=0}
	\end{align}
	
	If $q\in\Z\setminus\{0\}$, then $\vf_{0,0}\in\Dl^0(\S(q))$ if and only if
	\begin{align}
	\left(n(i+q)-m\left(j+\frac q2\right)\right)(2d^1_{0,0}(m+n,i+j)-d^1_{0,0}(n,j)-d^0_{0,0}(0,0))&=0,\label{one-half-der-in-terms-of-d^0_00-and-d^1_00}\\
	d^1_{0,0}(m,i)+d^1_{0,0}(n,j)=2d^0_{0,0}(0,0).\label{d^1_00(m_i)+d^1_00(n_j)=2d^0_00(m+n_i+j)}
	\end{align}
	
	If $q\in\Z\setminus\{0\}$, then $\vf_{0,q}\in\Dl^0(\S(q))$ if and only if
	\begin{align}
	2\left(n(i+q)-m\left(j+\frac q2\right)\right)d^1_{0,q}(m+n,i+j)&=\left(n(i+q)-m\left(j+\frac {3q}2\right)\right)d^1_{0,q}(n,j)\notag\\
	&\quad\text{for }(m,i)\ne(0,-2q),\notag\\
	2nd^1_{0,q}(n,j-2q)&=nd^1_{0,q}(n,j),\notag\\
	d^1_{0,q}(m,i)+d^1_{0,q}(n,j)&=0\text{ for }(m+n,i+j)\ne(0,-2q),\label{d^1_0q(m_i)+d^1_0q(n_j)=0}\\
	d^1_{0,q}(-n,-j-2q)+d^1_{0,q}(n,j)&=2d^0_{0,q}(0,-2q),\label{d^1_0q(-n_-j-2q)+d^1_0q(n_j)=2d^0_0q(0_-2q)}
	\end{align}
	
	If $q=0$, then $\vf_{0,0}\in\Dl^0(\S(q))$ if and only if
	\begin{align}\label{one-half-der-in-terms-of-d^0_00-and-d^1_00-q=0}
	(ni-mj)(2d^1_{0,0}(m+n,i+j)-d^0_{0,0}(m,i)-d^1_{0,0}(n,j))=0,
	\end{align}
	where $d^0_{0,0}(m,i)=d^0_{0,0}(m',i')$ for all $(m,i),(m',i')\in\Z\times\Z\setminus\{(0,0)\}$.
\end{lem}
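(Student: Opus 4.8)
The plan is to obtain the statement as a direct specialization of \cref{conditions-on-d^0-and-d^1} using the full description of $\Dl(\B(q))$ in \cref{Dl(B(q))-full-description}. Indeed, under the standing hypothesis $\vf_{r,s}|_{\S(q)_0}\in\Dl(\B(q))$, \cref{conditions-on-d^0-and-d^1} already reduces the condition $\vf_{r,s}\in\Dl^0(\S(q))$ to the two master identities \cref{one-half-der-in-terms-of-d^0_rs-and-d^1_rs,2qd^0_rs(m+n_i+j)=q(d^1_rs(m_i)+d^1_rs(n_j))}. So the whole lemma amounts to inserting, in each listed regime, the shape of the even coefficients $d^0_{r,s}$ forced by \cref{Dl(B(q))-full-description} and simplifying; the case division is organized precisely so that $d^0_{r,s}$ is completely pinned down in each case.

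First I would dispose of the degrees $(r,s)$ at which $\Dl(\B(q))$ carries no nonzero $\frac12$-derivation, so that $\vf_{r,s}|_{\S(q)_0}=0$, i.e. $d^0_{r,s}\equiv 0$. By \cref{d_rs-is-zero-for-rs-not-00-or-0q} (for $q\ne0$) and \cref{d_rs=0-for(r_s)-ne-(0_0)} (for $q=0$) these are all $(r,s)\notin\{(0,0),(0,q)\}$ when $q\in\Z$, and all $(r,s)\ne(0,0)$ when $q\notin\Z$. Setting $d^0_{r,s}=0$ makes the first summand on the right of \cref{one-half-der-in-terms-of-d^0_rs-and-d^1_rs} vanish, leaving exactly \cref{one-half-der-in-terms-of-d^1_rs}, while \cref{2qd^0_rs(m+n_i+j)=q(d^1_rs(m_i)+d^1_rs(n_j))} collapses to \cref{q(d^1_rs(m_i)+d^1_rs(n_j))=0}. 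This settles the first block.

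Next I would treat the exceptional degrees. For $(r,s)=(0,0)$ and $q\in\Z\setminus\{0\}$, \cref{d_00(m_i)-is-d_00(0_0)} gives that $d^0_{0,0}$ is the constant $d^0_{0,0}(0,0)$; substituting this into \cref{one-half-der-in-terms-of-d^0_rs-and-d^1_rs} and factoring out the common coefficient $n(i+q)-m(j+\tfrac q2)$ yields \cref{one-half-der-in-terms-of-d^0_00-and-d^1_00}, and dividing \cref{2qd^0_rs(m+n_i+j)=q(d^1_rs(m_i)+d^1_rs(n_j))} by $q\ne0$ gives \cref{d^1_00(m_i)+d^1_00(n_j)=2d^0_00(m+n_i+j)}. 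For $(r,s)=(0,q)$ and $q\in\Z\setminus\{0\}$, \cref{vf_0s=0-unless-s=q,alpha-half-der} show $d^0_{0,q}(m,i)=0$ except at $(m,i)=(0,-2q)$; I then substitute this and split the two master identities along \emph{different} indices. In \cref{one-half-der-in-terms-of-d^0_rs-and-d^1_rs} the split is on $(m,i)$: for $(m,i)\ne(0,-2q)$ the $d^0$-term drops out, giving the first displayed identity of the block, whereas for $(m,i)=(0,-2q)$ the coefficient $n(i+2q)-m(j+\tfrac q2)$ of $d^0$ vanishes anyway (since then $i+2q=0$ and $m=0$), and after cancelling $q$ one obtains $2nd^1_{0,q}(n,j-2q)=nd^1_{0,q}(n,j)$. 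In \cref{2qd^0_rs(m+n_i+j)=q(d^1_rs(m_i)+d^1_rs(n_j))} the split is on $(m+n,i+j)$: dividing by $q$ gives \cref{d^1_0q(m_i)+d^1_0q(n_j)=0} when $(m+n,i+j)\ne(0,-2q)$ and \cref{d^1_0q(-n_-j-2q)+d^1_0q(n_j)=2d^0_0q(0_-2q)} otherwise. Finally, for $q=0$ and $(r,s)=(0,0)$ the factor $q$ kills \cref{2qd^0_rs(m+n_i+j)=q(d^1_rs(m_i)+d^1_rs(n_j))} identically, so only \cref{one-half-der-in-terms-of-d^0_rs-and-d^1_rs} survives; its common coefficient is now $ni-mj$, and factoring it out produces \cref{one-half-der-in-terms-of-d^0_00-and-d^1_00-q=0}, the stated constancy of $d^0_{0,0}$ off the origin being exactly \cref{d_00(m_i)=d_00(m'_i')}.

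I expect the only delicate point to be the bookkeeping in the $(0,q)$ regime: one must keep the two master identities split along the different index tuples $(m,i)$ and $(m+n,i+j)$, and in the boundary subcase $(m,i)=(0,-2q)$ verify that the coefficient multiplying $d^0_{0,q}(0,-2q)$ genuinely vanishes, so that no $d^0$-contribution is silently lost. The recurring use of $q\ne0$ to cancel the scalar $q$ in the $q\ne0$ blocks, together with its degeneration at $q=0$, is exactly what separates the four regimes cleanly.
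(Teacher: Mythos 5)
Your proposal is correct and matches the paper's intent exactly: the paper states this lemma without a written proof, treating it as the routine specialization of Lemma \ref{conditions-on-d^0-and-d^1} via the description of $\Dl(\B(q))$ in Theorem \ref{Dl(B(q))-full-description}, which is precisely the verification you carry out (including the key observation that the $d^0$-coefficient vanishes at $(m,i)=(0,-2q)$ in the $(0,q)$ regime). No gaps.
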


Now we will consider each case from \cref{conditions-on-d^0-and-d^1-specified} separately.
\begin{lem}\label{vf_rs-for-(r_s)-ne-(0_0)-or-(0_q)}
	Let $\vf$ be a $\frac 12$-derivation of $\S(q)$. If $q\not\in\Z$ and $(r,s)\ne(0,0)$, or $q\in\Z$ and $(r,s)\not\in\{(0,0),(0,q)\}$, then $\vf_{r,s}=0$.
\end{lem}
\begin{proof}
	The map $\vf_{r,s}$ acts as in \cref{vf_rs(L_mi)=d^0_rs(m_i)L_m+r_i+s,vf_rs(G_mi)=d^1_rs(m_i)G_m+r_i+s}, and we know by \cref{Dl(B(q))-full-description} that $d^0_{r,s}=0$, so it remains to prove that $d^1_{r,s}=0$.
	
	\textit{Case 1.} $q\ne 0$. Then $d^1_{r,s}(m,i)+d^1_{r,s}(n,j)=0$ by \cref{q(d^1_rs(m_i)+d^1_rs(n_j))=0}. Taking $n=m$ and $j=i$, we get $d^1_{r,s}(m,i)=0$ for all $(m,i)\in\Z\times\Z$.
	
	\textit{Case 2.} $q=0$. Then 
	\begin{align}\label{2(ni-mj)d^1_rs(m+n_i+j)=((n+r)i-m(j+s))d^1_rs(n_j)}
		2(ni-mj)d^1_{r,s}(m+n,i+j)=((n+r)i-m(j+s))d^1_{r,s}(n,j)
	\end{align}
	by \cref{one-half-der-in-terms-of-d^1_rs}. Taking $m=j=0$ and $i\ne 0$ in \cref{2(ni-mj)d^1_rs(m+n_i+j)=((n+r)i-m(j+s))d^1_rs(n_j)}, we get
	\begin{align}\label{2nd^1_rs(n_i)=(n+r)d^1_rs(n_0)}
		2nd^1_{r,s}(n,i)=(n+r)d^1_{r,s}(n,0).
	\end{align}
	In particular, $d^1_{r,s}(n,i)$ does not depend on $i\ne 0$. On the other hand, $m=0$ and $j=-i\ne 0$ in \cref{2(ni-mj)d^1_rs(m+n_i+j)=((n+r)i-m(j+s))d^1_rs(n_j)} yield
	\begin{align}\label{2nd^1_rs(n_0)=(n+r)d^1_rs(n_i)}
		2nd^1_{r,s}(n,0)=(n+r)d^1_{r,s}(n,-i)=(n+r)d^1_{r,s}(n,i).
	\end{align}
	
	\textit{Case 2.1.} $n\not\in\{r,-\frac r3\}$. Then
	\begin{align*}
		\begin{vmatrix}
		2n & -(n+r)\\
		n+r & -2n
		\end{vmatrix}
		=(n+r)^2-4n^2=(r-n)(r+3n)\ne 0.
	\end{align*}
	Hence, the linear system \cref{2nd^1_rs(n_i)=(n+r)d^1_rs(n_0),2nd^1_rs(n_0)=(n+r)d^1_rs(n_i)} has only the trivial solution $d^1_{r,s}(n,i)=d^1_{r,s}(n,0)=0$.
	
	\textit{Case 2.2.} $n=r$. It follows from \cref{2(ni-mj)d^1_rs(m+n_i+j)=((n+r)i-m(j+s))d^1_rs(n_j)} that
	\begin{align}\label{2(ri-mj)d^1_rs(m+r_i+j)=(2ri-m(j+s))d^1_rs(r_j)}
		2(ri-mj)d^1_{r,s}(m+r,i+j)=(2ri-m(j+s))d^1_{r,s}(r,j).
	\end{align}
	If $r\ne 0$, then choosing $m\not\in\{0,-\frac{4r}3\}$, $i\ne\frac{m(j+s)}{2r}$ in \cref{2(ri-mj)d^1_rs(m+r_i+j)=(2ri-m(j+s))d^1_rs(r_j)} and using the result of Case 2.1, we get $d^1_{r,s}(r,j)=0$. If $r=0$, then choosing $m\ne 0$ in \cref{2(ri-mj)d^1_rs(m+r_i+j)=(2ri-m(j+s))d^1_rs(r_j)} we similarly get $d^1_{0,s}(0,j)=0$ whenever $j\ne -s$. Finally, choosing $m=-n\ne 0$ and $j=-i-s$ in \cref{2(ni-mj)d^1_rs(m+n_i+j)=((n+r)i-m(j+s))d^1_rs(n_j)} we obtain $sd^1_{0,s}(0,-s)=0$. Since $(r,s)\ne(0,0)$ by assumption, then $s\ne 0$, so $d^1_{0,s}(0,-s)=0$.
	
	\textit{Case 2.3.} $n=-\frac r3$. It follows from \cref{2(ni-mj)d^1_rs(m+n_i+j)=((n+r)i-m(j+s))d^1_rs(n_j)} that
	\begin{align}\label{-2(ri-over-3+mj)d^1_rs(m-r-over-3_i+j)=(2ri-over-3-m(j+s))d^1_rs(-r-over-3_j)}
	-2\left(\frac {ri}3+mj\right)d^1_{r,s}\left(m-\frac r3,i+j\right)=\left(\frac{2ri}3-m(j+s)\right)d^1_{r,s}\left(-\frac r3,j\right).
	\end{align}
	We may assume that $r\ne 0$, since $r=0$ was considered in Case 2.2. Then choosing $m\not\in\{0,\frac{4r}3\}$, $i\ne\frac{3m(j+s)}{2r}$ in \cref{-2(ri-over-3+mj)d^1_rs(m-r-over-3_i+j)=(2ri-over-3-m(j+s))d^1_rs(-r-over-3_j)} and using the result of Case 2.1, we get $d^1_{r,s}\left(-\frac r3,j\right)=0$.
\end{proof}

\begin{lem}\label{vf_0q-for-q-in-Z^*}
	Let $\vf$ be a $\frac 12$-derivation of $\S(q)$. If $q\in\Z\setminus\{0\}$, then $\vf_{0,q}=0$.
\end{lem}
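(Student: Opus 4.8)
The plan is to feed the characterization of $\vf_{0,q}\in\Dl^0(\S(q))$ supplied by \cref{conditions-on-d^0-and-d^1-specified} and reduce everything to a short chain of substitutions. First observe that the restriction $\vf_{0,q}|_{\S(q)_0}$ is a $\frac12$-derivation of $\B(q)$, so \cref{Dl(B(q))-full-description} (with $q\in\Z$) already forces $d^0_{0,q}(m,i)=0$ for all $(m,i)\ne(0,-2q)$, the $(0,q)$-homogeneous part of $\gen{\id,\af}$ being spanned by $\af$. Hence the only quantities left to annihilate are the scalars $d^1_{0,q}(m,i)$ and the single value $d^0_{0,q}(0,-2q)$; showing that all of them vanish will give $\vf_{0,q}=0$.

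The main tool is \cref{d^1_0q(m_i)+d^1_0q(n_j)=0}, which reads $d^1_{0,q}(m,i)+d^1_{0,q}(n,j)=0$ whenever $(m+n,i+j)\ne(0,-2q)$. The diagonal substitution $(n,j)=(m,i)$ is legitimate precisely when $(2m,2i)\ne(0,-2q)$, that is, when $(m,i)\ne(0,-q)$, and it yields $2d^1_{0,q}(m,i)=0$. Thus $d^1_{0,q}(m,i)=0$ for every $(m,i)\ne(0,-q)$.

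To recover the one remaining value $d^1_{0,q}(0,-q)$, I would apply \cref{d^1_0q(m_i)+d^1_0q(n_j)=0} with $(m,i)=(0,-q)$ and the off-diagonal choice $(n,j)=(0,0)$: this is admissible because $(m+n,i+j)=(0,-q)\ne(0,-2q)$ as $q\ne0$, and since $d^1_{0,q}(0,0)=0$ by the previous step it forces $d^1_{0,q}(0,-q)=0$. Therefore $d^1_{0,q}\equiv0$.

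Finally, with $d^1_{0,q}\equiv0$ the relation \cref{d^1_0q(-n_-j-2q)+d^1_0q(n_j)=2d^0_0q(0_-2q)} collapses to $2d^0_{0,q}(0,-2q)=0$, so $d^0_{0,q}(0,-2q)=0$; combined with the vanishing of the remaining $d^0_{0,q}$ noted at the outset, this gives $\vf_{0,q}=0$. The argument is purely a sequence of index substitutions, and the only point needing care is the exceptional index $(0,-q)$, where the diagonal substitution is forbidden and must be replaced by the off-diagonal one above; this is the sole obstacle and it is resolved by the auxiliary choice $(n,j)=(0,0)$.
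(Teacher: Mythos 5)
Your proposal is correct and follows essentially the same route as the paper's own proof: the diagonal substitution $(n,j)=(m,i)$ in \cref{d^1_0q(m_i)+d^1_0q(n_j)=0} kills $d^1_{0,q}$ off the exceptional index $(0,-q)$, the off-diagonal choice $(n,j)=(0,0)$ handles that index, and \cref{d^1_0q(-n_-j-2q)+d^1_0q(n_j)=2d^0_0q(0_-2q)} together with \cref{Dl(B(q))=<id_af>} finishes off $d^0_{0,q}$. No gaps.
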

\begin{proof}
	Taking $(m,i)=(n,j)\ne (0,-q)$ in \cref{d^1_0q(m_i)+d^1_0q(n_j)=0}, we obtain $d^1_{0,q}(m,i)=0$ for $(m,i)\ne(0,-q)$. In particular, $d^1_{0,q}(0,0)=0$. But then substituting $(m,i)=(0,-q)$ and $(n,j)=(0,0)$ in the same \cref{d^1_0q(m_i)+d^1_0q(n_j)=0}, we come to $d^1_{0,q}(0,-q)=0$. Thus, $d^1_{0,q}=0$, which implies $d^0_{0,q}(0,-2q)=0$ by \cref{d^1_0q(-n_-j-2q)+d^1_0q(n_j)=2d^0_0q(0_-2q)}. Since we already know by \cref{Dl(B(q))=<id_af>} that $d^0_{0,q}(m,i)=0$ for all $(m,i)\ne(0,-2q)$, then $d^0_{0,q}=0$.
\end{proof}

\begin{lem}\label{d^1_00(m_i)=d^0_00(0_0)}
	Let $\vf$ be a $\frac 12$-derivation of $\S(q)$. If $q\ne 0$, then $d^1_{0,0}(m,i)=d^0_{0,0}(0,0)$ for all $m,i\in\Z$.
\end{lem}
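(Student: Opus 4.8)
The plan is to exploit the fact that, for $q\neq 0$, the only homogeneous $\frac 12$-derivations of $\B(q)$ of degree $(0,0)$ are the scalar multiples of $\id$, and then to read off $d^1_{0,0}$ from the compatibility between the even and odd parts encoded in \cref{conditions-on-d^0-and-d^1}. Throughout, $\vf_{0,0}$ denotes the degree-$(0,0)$ homogeneous component of $\vf$; since $\vf\in\Dl^0(\S(q))$ if and only if each $\vf_{r,s}\in\Dl^0(\S(q))$, the map $\vf_{0,0}$ is itself an even $\frac 12$-derivation of $\S(q)$, so \cref{conditions-on-d^0-and-d^1} applies to it with $(r,s)=(0,0)$.

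First I would pin down $d^0_{0,0}$. The restriction $\vf_{0,0}|_{\S(q)_0}$ is a homogeneous $\frac 12$-derivation of $\B(q)$ of degree $(0,0)$ acting by $L_{m,i}\mapsto d^0_{0,0}(m,i)L_{m,i}$. By \cref{Dl(B(q))-full-description}, the space $\Dl(\B(q))$ is spanned by $\id$ together with $\af$ when $q\in\Z$; since $\af=\af_{0,q}$ is homogeneous of degree $(0,q)$ and the hypothesis $q\neq 0$ forces $(0,q)\neq(0,0)$, the degree-$(0,0)$ part of $\Dl(\B(q))$ equals $\gen\id$ in every case $q\neq 0$. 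Hence $\vf_{0,0}|_{\S(q)_0}=d^0_{0,0}(0,0)\id$, that is,
\begin{align*}
d^0_{0,0}(m,i)=d^0_{0,0}(0,0)\quad\text{for all }m,i\in\Z.
\end{align*}

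It then remains to feed this into the odd-part compatibility. Specializing \cref{2qd^0_rs(m+n_i+j)=q(d^1_rs(m_i)+d^1_rs(n_j))} to $(r,s)=(0,0)$ and dividing by $q\neq 0$ gives
\begin{align*}
2d^0_{0,0}(m+n,i+j)=d^1_{0,0}(m,i)+d^1_{0,0}(n,j),
\end{align*}
and by the previous paragraph the left-hand side is the constant $2d^0_{0,0}(0,0)$. Setting $(n,j)=(m,i)$ collapses the right-hand side to $2d^1_{0,0}(m,i)$, whence $d^1_{0,0}(m,i)=d^0_{0,0}(0,0)$ for every $m,i\in\Z$, as claimed.

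I expect essentially no obstacle here: the whole content is already packaged in \cref{conditions-on-d^0-and-d^1,Dl(B(q))-full-description}, and the only point requiring care is the degree bookkeeping that excludes $\af$ from the degree-$(0,0)$ component. This is precisely where $q\neq 0$ enters, and it is genuinely necessary: for $q=0$ the map $\af$ has degree $(0,0)$, so $d^0_{0,0}$ need no longer be constant and the argument breaks down, matching the exclusion in the statement.
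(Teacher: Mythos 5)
Your proposal is correct and follows essentially the same route as the paper: both arguments rest on the constancy of $d^0_{0,0}$ (forced by \cref{Dl(B(q))-full-description} since $\af$ has degree $(0,q)\ne(0,0)$ when $q\ne 0$) combined with the relation $2qd^0_{0,0}(m+n,i+j)=q(d^1_{0,0}(m,i)+d^1_{0,0}(n,j))$ from \cref{conditions-on-d^0-and-d^1}. The only cosmetic difference is that you substitute $(n,j)=(m,i)$ in one step, whereas the paper first sets all indices to zero and then only $n=j=0$; the content is identical.
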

\begin{proof}
	The substitution $m=i=n=j=0$ in \cref{d^1_00(m_i)+d^1_00(n_j)=2d^0_00(m+n_i+j)} gives $d^1_{0,0}(0,0)=d^0_{0,0}(0,0)$. Now, substituting only $n=j=0$ in \cref{d^1_00(m_i)+d^1_00(n_j)=2d^0_00(m+n_i+j)}, we obtain $d^1_{0,0}(m,i)=2d^0_{0,0}(0,0)-d^1_{0,0}(0,0)=d^0_{0,0}(0,0)$. Observe that \cref{one-half-der-in-terms-of-d^0_00-and-d^1_00} is then trivially satisfied.
\end{proof}

\begin{lem}\label{d^1_00(m_i)=d^0_00(m'_i')}
	Let $\vf$ be a $\frac 12$-derivation of $\S(0)$. Then $d^1_{0,0}(m,i)=d^0_{0,0}(m',i')$ for all $(m,i),(m',i')\in\Z\times\Z\setminus\{(0,0)\}$.
\end{lem}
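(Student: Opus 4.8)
The plan is to prove \cref{d^1_00(m_i)=d^0_00(m'_i')} by exploiting the two constraints available for the case $q=0$, namely \cref{one-half-der-in-terms-of-d^0_00-and-d^1_00-q=0} together with the already-established fact (recorded in the hypothesis of \cref{conditions-on-d^0-and-d^1-specified} and proved in \cref{d_00(m_i)=d_00(m'_i')}) that $d^0_{0,0}$ is a single constant $c:=d^0_{0,0}(m,i)$ on all of $\Z\times\Z\setminus\{(0,0)\}$. Since \cref{one-half-der-in-terms-of-d^0_00-and-d^1_00-q=0} reads
\begin{align*}
(ni-mj)\bigl(2d^1_{0,0}(m+n,i+j)-d^0_{0,0}(m,i)-d^1_{0,0}(n,j)\bigr)=0,
\end{align*}
whenever the Lie bracket coefficient $ni-mj$ is nonzero we obtain the relation $2d^1_{0,0}(m+n,i+j)=d^0_{0,0}(m,i)+d^1_{0,0}(n,j)$.

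First I would feed in the simplest nontrivial specializations of the indices so that $ni-mj\ne 0$ and so that $(m,i)$ lands in the region where $d^0_{0,0}(m,i)=c$ is known. Taking $n=j=0$ forces the bracket coefficient to vanish, so that choice is useless; instead I would pick, for a target pair $(a,b)\ne(0,0)$, a decomposition $(a,b)=(m,i)+(n,j)$ with $ni-mj\ne 0$ and with $(m,i)\ne(0,0)$. This yields $2d^1_{0,0}(a,b)=c+d^1_{0,0}(n,j)$. The idea is to run this relation in two different ways for the same target, or to iterate it, so as to pin down every value $d^1_{0,0}(n,j)$ in terms of $c$ alone and conclude $d^1_{0,0}(n,j)=c$ for all $(n,j)$. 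Concretely, I expect that choosing $(m,i)=(1,0)$, $(n,j)=(0,1)$ (so $ni-mj=-1\ne0$) and symmetric variants will express the relevant $d^1$-values against the known constant $c$, and a short bootstrap over all lattice points $(n,j)$ — reaching points with $nj=0$ through points with $nj\ne0$ — completes the determination.

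The main obstacle will be the degeneracy locus $ni-mj=0$: the identity \cref{one-half-der-in-terms-of-d^0_00-and-d^1_00-q=0} gives no information when the bracket coefficient vanishes, so a value $d^1_{0,0}(a,b)$ at an ``exceptional'' point cannot be read off from a single decomposition whose summands happen to be proportional. I would handle this exactly as in the proof of \cref{d_00(m_i)=d_00(m'_i')} for $d^0_{0,0}$: cover each target point by at least one admissible decomposition with $ni-mj\ne 0$ (such decompositions always exist for any $(a,b)\neq(0,0)$ since one can perturb the split along a direction transverse to $(a,b)$), and cross-compare the resulting equations to eliminate the auxiliary $d^1$-values. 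The special points $(n,j)$ with $n=0$ or $j=0$, and ultimately the base value $d^1_{0,0}(0,0)$, are reached last, using a decomposition in which the other summand is already known to equal $c$.

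Once every $d^1_{0,0}(n,j)$ has been shown equal to $c=d^0_{0,0}(m',i')$, the claimed equality $d^1_{0,0}(m,i)=d^0_{0,0}(m',i')$ for all $(m,i),(m',i')\in\Z\times\Z\setminus\{(0,0)\}$ follows immediately. I would phrase the writeup to mirror the structure of \cref{d_00(m_i)=d_00(m'_i')}, reusing its conclusion on $d^0_{0,0}$ rather than reproving it, so the argument reduces to the single relation $2d^1_{0,0}(m+n,i+j)=c+d^1_{0,0}(n,j)$ and a finite bookkeeping over representative index choices.
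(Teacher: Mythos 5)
Your strategy is the same as the paper's: for $q=0$ the only constraint beyond $\vf_{0,0}|_{\S(0)_0}\in\Dl(\B(0))$ is the single relation $2d^1_{0,0}(m+n,i+j)=d^0_{0,0}(m,i)+d^1_{0,0}(n,j)$, valid whenever $ni-mj\ne 0$, and both you and the paper determine $d^1_{0,0}$ off the origin by specializing indices and feeding in the constancy $d^0_{0,0}\equiv c$ off $(0,0)$ from \cref{d_00(m_i)=d_00(m'_i')}. The cross-comparison you gesture at does close, and in fact more cleanly than the paper's chain of four substitutions: for any $(a,b)\ne(0,0)$ pick $(n,j)$ with $nb-aj\ne 0$ (e.g.\ $(n,j)=(-b,a)$); the relation for target $(a,b)$ with second summand $(n,j)$ and for target $(n,j)$ with second summand $(a,b)$ gives $2d^1_{0,0}(a,b)=c+d^1_{0,0}(n,j)$ and $2d^1_{0,0}(n,j)=c+d^1_{0,0}(a,b)$, forcing $d^1_{0,0}(a,b)=d^1_{0,0}(n,j)=c$.

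However, your closing claim that the bootstrap also reaches ``the base value $d^1_{0,0}(0,0)$'' is false and must be deleted. Every decomposition $(0,0)=(m,i)+(n,j)$ has $(n,j)=-(m,i)$, so $ni-mj=-mi+mi=0$ and the relation yields no information; $d^1_{0,0}(0,0)$ is genuinely a free parameter. This is not cosmetic: the map $\bt$ of \cref{alpha-and-beta-half-der-S(0)}, which has $d^1_{0,0}(0,0)=1$ and all other coefficients zero, is a non-trivial $\frac 12$-derivation precisely because $(0,0)$ is unreachable, and \cref{Dl(S(q))=<id_af_bt>} would be contradicted if your claim held. The lemma's statement correctly excludes $(0,0)$, so your argument for what is actually asserted survives once this overreach is removed.
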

\begin{proof}
	Substituting $m=j=0$ and $n,i\ne 0$ in \cref{one-half-der-in-terms-of-d^0_00-and-d^1_00-q=0} we get
	\begin{align}\label{2d^1_00(n_i)-d^0_00(0_i)-d^1_00(n_0)=0}
	2d^1_{0,0}(n,i)-d^0_{0,0}(0,i)-d^1_{0,0}(n,0)=0,\mbox{ if }ni\ne 0.
	\end{align}
On the other hand, the substitution $n=i=0$ and $m,j\ne 0$ in \cref{one-half-der-in-terms-of-d^0_00-and-d^1_00-q=0} gives
	\begin{align}\label{2d^1_00(m_j)-d^0_00(m_0)-d^1_00(0_j)=0}
	2d^1_{0,0}(m,j)-d^0_{0,0}(m,0)-d^1_{0,0}(0,j)=0,\mbox{ if }mj\ne 0.
	\end{align}
Since $d^0_{0,0}(0,i)=d^0_{0,0}(m,0)$ for all $m,i\ne 0$ by \cref{d_00(m_i)=d_00(m'_i')}, it follows from \cref{2d^1_00(n_i)-d^0_00(0_i)-d^1_00(n_0)=0,2d^1_00(m_j)-d^0_00(m_0)-d^1_00(0_j)=0} (with $m=n$ and $i=j$) that
\begin{align}\label{d^1_00(n_0)=d^1_00(0_i)}
	d^1_{0,0}(n,0)=d^1_{0,0}(0,i),\mbox{ if }ni\ne 0.
\end{align}
Now take $m=-n\ne 0$, $i\ne 0$ and $j=0$ in \cref{one-half-der-in-terms-of-d^0_00-and-d^1_00-q=0}:
\begin{align*}
2d^1_{0,0}(0,i)-d^0_{0,0}(-n,i)-d^1_{0,0}(n,0)=0.
\end{align*}
In view of \cref{d^1_00(n_0)=d^1_00(0_i)} we conclude that
\begin{align}\label{d^1_00(0_i)=d^1_00(n_0)=d^0_00(-n_i)}
d^1_{0,0}(0,i)=d^1_{0,0}(n,0)=d^0_{0,0}(-n,i).
\end{align}
Since $d^0_{0,0}(-n,i)=d^0_{0,0}(0,i)$ by \cref{d_00(m_i)=d_00(m'_i')}, it follows from \cref{d^1_00(0_i)=d^1_00(n_0)=d^0_00(-n_i),2d^1_00(n_i)-d^0_00(0_i)-d^1_00(n_0)=0} that $d^1_{0,0}(n,i)=d^0_{0,0}(0,i)$ for all $n,i\ne 0$. Combining this with \cref{d^1_00(n_0)=d^1_00(0_i),d^1_00(0_i)=d^1_00(n_0)=d^0_00(-n_i)}, we obtain the desired result.
\end{proof}

\begin{lem}\label{alpha-and-beta-half-der-S(0)}
	The linear maps $\af,\bt:\S(0)\to\S(0)$ such that 
	\begin{align}
		\af(L_{m,i})&=
		\begin{cases}
			0, & (m,i)\ne(0,0),\\
			L_{0,0}, & (m,i)=(0,0),
		\end{cases}\ 
		\af(G_{m,i})=0,\label{af(L_mi)=0-or-L_0-0}\\
		\bt(G_{m,i})&=
		\begin{cases}
			0, & (m,i)\ne(0,0),\\
			G_{0,0}, & (m,i)=(0,0),
		\end{cases}\ 
		\bt(L_{m,i})=0\label{bt(L_mi)=0-or-G_0-0}
	\end{align}
	are $\frac 12$-derivations of $\S(0)$.
\end{lem}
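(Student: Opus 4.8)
The plan is to apply \cref{conditions-on-d^0-and-d^1} with $q=0$ and $(r,s)=(0,0)$. Both $\af$ and $\bt$ are even linear maps that are homogeneous of degree $(0,0)$ for the $\Z\times\Z$-grading of $\S(0)$, so each coincides with its own $(0,0)$-component and, being even, is a $\frac12$-superderivation precisely when it is a $\frac12$-derivation. In the notation of \cref{vf_rs(L_mi)=d^0_rs(m_i)L_m+r_i+s,vf_rs(G_mi)=d^1_rs(m_i)G_m+r_i+s}, the map $\af$ has $d^1_{0,0}\equiv 0$ and $d^0_{0,0}(m,i)$ equal to $1$ if $(m,i)=(0,0)$ and $0$ otherwise, while $\bt$ has $d^0_{0,0}\equiv 0$ and $d^1_{0,0}(m,i)$ equal to $1$ if $(m,i)=(0,0)$ and $0$ otherwise; this is read off directly from \cref{af(L_mi)=0-or-L_0-0,bt(L_mi)=0-or-G_0-0}.

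By \cref{conditions-on-d^0-and-d^1} I then have to check three things. First, $\vf_{0,0}|_{\S(0)_0}\in\Dl(\B(0))$: for $\af$ this restriction is exactly the map of \cref{alpha-half-der} specialized to $q=0$, which is a $\frac12$-derivation of $\B(0)$, and for $\bt$ the restriction is the zero map, which is trivially a $\frac12$-derivation. Second, \cref{2qd^0_rs(m+n_i+j)=q(d^1_rs(m_i)+d^1_rs(n_j))} holds automatically because both sides carry the factor $q$, which is $0$. Third, the identity \cref{one-half-der-in-terms-of-d^0_rs-and-d^1_rs}, which for $q=0$ and $(r,s)=(0,0)$ reduces to \cref{one-half-der-in-terms-of-d^0_00-and-d^1_00-q=0}, must be verified.

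The crux is therefore the single scalar equation
\begin{align*}
(ni-mj)\bigl(2d^1_{0,0}(m+n,i+j)-d^0_{0,0}(m,i)-d^1_{0,0}(n,j)\bigr)=0,
\end{align*}
and the key observation is that every coefficient function appearing here is a Kronecker delta supported at the origin, while the prefactor $ni-mj$ vanishes exactly on the relevant supports. Indeed, $d^0_{0,0}(m,i)$ and $d^1_{0,0}(m,i)$ are nonzero only when $(m,i)=(0,0)$, in which case $ni-mj=0$; $d^1_{0,0}(n,j)$ is nonzero only when $(n,j)=(0,0)$, again forcing $ni-mj=0$; and $d^1_{0,0}(m+n,i+j)$ is nonzero only when $m=-n$ and $i=-j$, in which case $ni-mj=n(-j)-(-n)j=0$. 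Hence whichever term is nonvanishing, it is multiplied by $ni-mj=0$, so the displayed equation holds identically for all $m,n,i,j\in\Z$.

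This establishes the third condition for both $\af$ and $\bt$, and with it the lemma. I expect no genuine obstacle here; the only point requiring care is matching the supports of the delta terms with the zero set of $ni-mj$, which is precisely what makes every surviving contribution collapse.
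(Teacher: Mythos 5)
Your proof is correct and follows essentially the same route as the paper: identify $\af$ and $\bt$ with their $(0,0)$-components, invoke \cref{conditions-on-d^0-and-d^1} (the paper uses its specialization \cref{conditions-on-d^0-and-d^1-specified} together with \cref{alpha-half-der} for the restriction to $\S(0)_0$), and observe that every Kronecker-delta term is supported where the prefactor $ni-mj$ vanishes. The only cosmetic difference is that you handle $\af$ and $\bt$ in one unified support argument while the paper treats them in two separate case analyses.
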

\begin{proof}
	Let us first prove that $\af\in\Dl^0(\S(0))$. We have $\af=\af_{0,0}$, where
	\begin{align}\label{d^0_00(m_i)=0-or-1}
		d^0_{0,0}(m,i)=
		\begin{cases}
		0, & (m,i)\ne(0,0),\\
		1, & (m,i)=(0,0)
		\end{cases}
	\end{align}
	and $d^1_{0,0}=0$. By \cref{alpha-half-der} we know that $\vf_{0,0}|_{\S(0)_0}\in\Dl(\B(0))$. It remains to check \cref{one-half-der-in-terms-of-d^0_00-and-d^1_00-q=0} which reduces to $(ni-mj)d^0_{0,0}(m,i)=0$. The latter is trivial by \cref{d^0_00(m_i)=0-or-1}.
	
	Now we will prove that $\bt\in\Dl^0(\S(q))$. Again, $\bt=\bt_{0,0}$, but now
	\begin{align}\label{d^1_00(m_i)=0-or-1}
		d^1_{0,0}(m,i)=
		\begin{cases}
		0, & (m,i)\ne(0,0),\\
		1, & (m,i)=(0,0)
		\end{cases}
	\end{align}
	and $d^0_{0,0}=0$. The equality \cref{one-half-der-in-terms-of-d^0_00-and-d^1_00-q=0} that we need to verify takes the form
	\begin{align}\label{(ni-mj)(2d^1_00(m+n_i+j)-d^1_00(n_j))=0}
	(ni-mj)(2d^1_{0,0}(m+n,i+j)-d^1_{0,0}(n,j))=0.    
	\end{align}
	Consider the following cases.
	
	\textit{Case 1.} $(n,j),(m+n,i+j)\ne(0,0)$. Then both sides of \cref{(ni-mj)(2d^1_00(m+n_i+j)-d^1_00(n_j))=0} are zero because $d^1_{0,0}(m+n,i+j)=d^1_{0,0}(n,j)=0$ by \cref{d^1_00(m_i)=0-or-1}.
	
	\textit{Case 2.} $(n,j)=(0,0)$ or $(m+n,i+j)=(0,0)$. Then $ni-mj=0$, so again both sides of \cref{(ni-mj)(2d^1_00(m+n_i+j)-d^1_00(n_j))=0} are zero.
\end{proof}

\begin{prop}\label{Dl(S(q))=<id_af_bt>}
	For all $q\in\C$ we have
	\begin{align*}
		\Dl^0(\S(q))=
		\begin{cases}
		\gen{\id}, & q\ne 0,\\
		\gen{\id,\af,\bt}, & q=0.
		\end{cases}
	\end{align*}
\end{prop}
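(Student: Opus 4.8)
The plan is to assemble the proposition directly from the lemmas already proved, treating the cases $q\ne 0$ and $q=0$ separately. For an arbitrary even $\frac 12$-derivation $\vf$, I would decompose it as $\vf=\sum_{r,s}\vf_{r,s}$ and recall that $\vf\in\Dl^0(\S(q))$ if and only if each homogeneous component $\vf_{r,s}$ lies in $\Dl^0(\S(q))$, as observed before \cref{conditions-on-d^0-and-d^1}. The strategy is then to show that all components vanish except those forced to survive, and to identify the survivors with the spanning maps listed in the statement.

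First I would treat $q\ne 0$. By \cref{vf_rs-for-(r_s)-ne-(0_0)-or-(0_q)} every component $\vf_{r,s}$ with $(r,s)\notin\{(0,0),(0,q)\}$ is zero, and by \cref{vf_0q-for-q-in-Z^*} the component $\vf_{0,q}$ also vanishes when $q\in\Z\setminus\{0\}$ (while for $q\notin\Z$ there is simply no such index to consider). Hence only $\vf_{0,0}$ can be nonzero. For this component, \cref{Dl(B(q))-full-description} together with $q\ne 0$ forces $\vf_{0,0}|_{\S(q)_0}=d^0_{0,0}(0,0)\,\id$ on the even part (since $\af$ only appears for the index $(0,q)$, which is excluded here once we are on $\S(q)_0$), and \cref{d^1_00(m_i)=d^0_00(0_0)} shows $d^1_{0,0}(m,i)=d^0_{0,0}(0,0)$ for all $m,i$, so $\vf_{0,0}$ acts as the same scalar on the odd part as well. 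Therefore $\vf=d^0_{0,0}(0,0)\,\id\in\gen{\id}$, and conversely $\id$ is manifestly an even $\frac 12$-derivation; this gives $\Dl^0(\S(q))=\gen{\id}$ for $q\ne 0$.

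For $q=0$ the argument is parallel. By \cref{vf_rs-for-(r_s)-ne-(0_0)-or-(0_q)} (with $q\notin\Z$ replaced by the $q=0$ branch of that lemma) every $\vf_{r,s}$ with $(r,s)\ne(0,0)$ vanishes, so again only $\vf_{0,0}$ survives. On the even part, \cref{d_00(m_i)=d_00(m'_i')} shows $d^0_{0,0}$ is constant off $(0,0)$, and \cref{d^1_00(m_i)=d^0_00(m'_i')} shows $d^1_{0,0}(m,i)$ equals that same constant for all $(m,i)\ne(0,0)$. Writing $c$ for this common value, the component $\vf_{0,0}$ agrees with $c\,\id$ everywhere except possibly at the two basis vectors $L_{0,0}$ and $G_{0,0}$, where the independent scalars $d^0_{0,0}(0,0)$ and $d^1_{0,0}(0,0)$ may differ from $c$. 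The two resulting discrepancies are exactly the maps $\af$ and $\bt$ of \cref{alpha-and-beta-half-der-S(0)}, so $\vf=c\,\id+(d^0_{0,0}(0,0)-c)\af+(d^1_{0,0}(0,0)-c)\bt\in\gen{\id,\af,\bt}$. The reverse inclusion is immediate: $\id$ is always a $\frac 12$-derivation, and $\af,\bt\in\Dl^0(\S(0))$ by \cref{alpha-and-beta-half-der-S(0)}, proving $\Dl^0(\S(0))=\gen{\id,\af,\bt}$.

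The only genuinely delicate point is the bookkeeping at the distinguished index $(0,0)$ in the case $q=0$: one must confirm that the even and odd parts of $\vf_{0,0}$ need \emph{not} share the value at $(0,0)$ and that the two independent defects are captured precisely by $\af$ and $\bt$ rather than by some single map. This follows because \cref{one-half-der-in-terms-of-d^0_00-and-d^1_00-q=0} carries the prefactor $ni-mj$, which vanishes whenever $(m,i)$, $(n,j)$, or $(m+n,i+j)$ equals $(0,0)$, so the compatibility condition imposes no relation tying $d^0_{0,0}(0,0)$ to $d^1_{0,0}(0,0)$; all other lemmas I invoke are already established in the excerpt, so no further computation is required.
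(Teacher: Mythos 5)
Your proposal is correct and follows essentially the same route as the paper, whose proof is simply the observation that the result follows from \cref{vf_rs-for-(r_s)-ne-(0_0)-or-(0_q),vf_0q-for-q-in-Z^*,d^1_00(m_i)=d^0_00(0_0),d^1_00(m_i)=d^0_00(m'_i'),alpha-and-beta-half-der-S(0)}; you merely make explicit how those lemmas combine, including the correct identification of the two independent defects at $(0,0)$ with $\af$ and $\bt$ in the case $q=0$.
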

\begin{proof}
    This follows from \cref{vf_rs-for-(r_s)-ne-(0_0)-or-(0_q),vf_0q-for-q-in-Z^*,d^1_00(m_i)=d^0_00(0_0),d^1_00(m_i)=d^0_00(m'_i'),alpha-and-beta-half-der-S(0)}.
\end{proof}

\subsection{Odd $\frac 12$-derivations of $\S(q)$}

Recall that a linear map $\vf:\S(q)\to\S(q)$ is \textit{odd}, if $\vf(\S(q)_i)\sst \S(q)_{1-i}$ for $i\in\{0,1\}$. In this case $|\vf|=1$, and $\vf$ is a $\frac 12$-superderivation of $\S(q)$ if and only if 
 	\begin{align}
 		\varphi([x,y])&= \frac 12 \left([\varphi(x),y]+[x, \varphi(y)] \right),\ x\in\S(q)_0,\label{vf([x_y])-for-x-even}\\
 		\varphi([x,y])&= \frac 12 \left([\varphi(x),y]-[x, \varphi(y)] \right),\ x\in\S(q)_1.\label{vf([x_y])-for-x-odd}
 	\end{align}
Denote by $\Dl^1(\S(q))$ the space of odd $\frac 12$-superderivations of $\S(q)$. As usual, for any $\vf\in \Dl^1(\S(q))$, we write 
\begin{align*}
\vf=\sum_{r,s\in\Z}\vf_{r,s},
\end{align*}
where 
\begin{align}
\vf_{r,s}(L_{m,i})=d^0_{r,s}(m,i)G_{m+r,i+s},\label{vf_rs(L_mi)=d^0_rs(m_i)G_m+r_i+s}\\
\vf_{r,s}(G_{m,i})=d^1_{r,s}(m,i)L_{m+r,i+s}\label{vf_rs(G_mi)=d^1_rs(m_i)L_m+r_i+s}
\end{align}
for some $d^i_{r,s}(m,i)\in\C$, $i=0,1$. We have $\vf\in\Dl^1(\S(q))$ if and only if $\vf_{r,s}\in\Dl^1(\S(q))$ for all $r,s\in\Z$.

\begin{lem}\label{d^0-and-d^1-in-Delta^1_half}
	Let $\vf_{r,s}:\S(q)\to\S(q)$ be a linear map satisfying \cref{vf_rs(L_mi)=d^0_rs(m_i)G_m+r_i+s,vf_rs(G_mi)=d^1_rs(m_i)L_m+r_i+s}. 
	Then $\vf_{r,s}\in\Dl^0(\S(q))$ if and only if
	\begin{align}
	2(n(i + q) - m(j + q))d^0_{r,s}(m+n,i+j)&= d^0_{r,s}(m,i)\left(n\left(i+s+\frac q2\right)-(m+r)(j+q)\right)\notag\\
	&\quad+d^0_{r,s}(n,j)\left((n+r)(i+q)-m\left(j+s+\frac q2\right)\right),\label{vf_rs[L_mi_L_nj]-Dl^1-general}\\
	2\left(n(i+q)-m\left(j+\frac q2\right)\right)d^1_{r,s}(m+n,i+j)&= 2q\cdot d^0_{r,s}(m,i)\notag\\
	&\quad+((n+r)(i+q)-m(j+s+q))d^1_{r,s}(n,j),\label{vf_rs[L_mi_G_nj]-Dl^1-general}\\	
	4q\cdot d^0_{r,s}(m+n,i+j)&=\left(n(i+s+q)-(m+r)\left(j+\frac q2\right)\right)d^1_{r,s}(m,i)\notag\\
	&\quad+\left(m(j+s+q)-(n+r)\left(i+\frac q2\right)\right)d^1_{r,s}(n,j).\label{vf_rsG_mi_G_nj]-Dl^1-general}
	\end{align}
\end{lem}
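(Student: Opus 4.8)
The plan is to verify the two defining identities \cref{vf([x_y])-for-x-even,vf([x_y])-for-x-odd} of an odd $\frac 12$-superderivation directly on the basis $\{L_{m,i},G_{m,i}\mid m,i\in\Z\}$, in exactly the same spirit as the proof of \cref{conditions-on-d^0-and-d^1}. Since both the super-bracket and the compatibility conditions \cref{vf([x_y])-for-x-even,vf([x_y])-for-x-odd} respect super-antisymmetry, it suffices to test the identity on one representative of each unordered pair of basis vectors. There are three essentially different types of brackets, namely $[L_{m,i},L_{n,j}]$, $[L_{m,i},G_{n,j}]$ and $[G_{m,i},G_{n,j}]$, and I expect each of them to produce exactly one of the three stated scalar equations.

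First I would treat $[L_{m,i},L_{n,j}]$. Here the first argument is even, so \cref{vf([x_y])-for-x-even} applies. Expanding the left-hand side by \cref{[L_mi_L_nj]=(n(i+q)-m(j+q))L_m+n_i+j} and \cref{vf_rs(G_mi)=d^1_rs(m_i)L_m+r_i+s} shows it is a multiple of $G_{m+n+r,i+j+s}$ (recall $\vf_{r,s}(L_{m,i})$ is odd). On the right-hand side one uses \cref{vf_rs(L_mi)=d^0_rs(m_i)G_m+r_i+s} and \cref{[L_mi_G_nj]}, the only subtlety being to rewrite $[G_{m+r,i+s},L_{n,j}]$ as $-[L_{n,j},G_{m+r,i+s}]$ before applying \cref{[L_mi_G_nj]}. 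Collecting the coefficient of $G_{m+n+r,i+j+s}$ yields \cref{vf_rs[L_mi_L_nj]-Dl^1-general}.

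Next I would treat $[L_{m,i},G_{n,j}]$, again an even-first bracket, so \cref{vf([x_y])-for-x-even} applies. Its left-hand side, via \cref{[L_mi_G_nj]} and \cref{vf_rs(G_mi)=d^1_rs(m_i)L_m+r_i+s}, is a multiple of $L_{m+n+r,i+j+s}$; on the right-hand side the term $[\vf_{r,s}(L_{m,i}),G_{n,j}]$ becomes a $[G,G]$-bracket, contributing the summand $2q\,d^0_{r,s}(m,i)$ through \cref{[G_mi_G_nj]}, while $[L_{m,i},\vf_{r,s}(G_{n,j})]$ is an ordinary $[L,L]$-bracket handled by \cref{[L_mi_L_nj]=(n(i+q)-m(j+q))L_m+n_i+j}. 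Equating coefficients of $L_{m+n+r,i+j+s}$ gives \cref{vf_rs[L_mi_G_nj]-Dl^1-general}. Finally, for $[G_{m,i},G_{n,j}]$ the first argument is odd, so I would use \cref{vf([x_y])-for-x-odd}; the left-hand side is a multiple of $G_{m+n+r,i+j+s}$ by \cref{[G_mi_G_nj]} and \cref{vf_rs(L_mi)=d^0_rs(m_i)G_m+r_i+s}, and both terms on the right are $[L,G]$-type brackets evaluated through \cref{[L_mi_G_nj]} after the appropriate super-antisymmetry rewrites. Collecting coefficients produces \cref{vf_rsG_mi_G_nj]-Dl^1-general}.

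The main obstacle is purely a matter of sign bookkeeping rather than of any conceptual difficulty. Because $\vf_{r,s}$ is odd, it swaps the parities of the basis vectors, so each bracket appearing on the right-hand side is of a \emph{different} type from the one on the left, and one must repeatedly invoke the super-antisymmetry $[G_{a,b},L_{c,d}]=-[L_{c,d},G_{a,b}]$ together with the minus sign carried by \cref{vf([x_y])-for-x-odd} in the $[G,G]$ computation. Reconciling these two independent sources of signs is the step most prone to error; once it is done correctly, the three resulting scalar identities are exactly \cref{vf_rs[L_mi_L_nj]-Dl^1-general}, \cref{vf_rs[L_mi_G_nj]-Dl^1-general} and \cref{vf_rsG_mi_G_nj]-Dl^1-general}, which proves the equivalence.
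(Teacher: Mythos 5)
Your proposal is correct and follows essentially the same route as the paper: the authors likewise evaluate \cref{vf([x_y])-for-x-even} on the pairs $(L_{m,i},L_{n,j})$ and $(L_{m,i},G_{n,j})$ and \cref{vf([x_y])-for-x-odd} on $(G_{m,i},G_{n,j})$, then read off the coefficients using \crefrange{[L_mi_L_nj]=(n(i+q)-m(j+q))L_m+n_i+j}{[G_mi_G_nj]}. Your remark that the parity swap caused by the odd map $\vf_{r,s}$ forces each right-hand bracket to be of a different type than the left-hand one, and that the signs from super-antisymmetry must be tracked, is exactly the bookkeeping the paper carries out.
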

\begin{proof}
	Fix $r,s\in\Z$. Writing \cref{vf([x_y])-for-x-even} for $\vf=\vf_{r,s}$ with $x=L_{m,i}$ and $y=L_{n,j}$, we have by \cref{vf_rs(L_mi)=d^0_rs(m_i)G_m+r_i+s}
	\begin{align*}
	2\vf_{r,s}([L_{m,i},L_{n,j}])&=[\vf_{r,s}(L_{m,i}),L_{n,j}]+[L_{m,i}, \vf_{r,s}(L_{n,j})]\\
	&=d^0_{r,s}(m,i)[G_{m+r,i+s},L_{n,j}]+d^0_{r,s}(n,j)[L_{m,i}, G_{n+r,j+s}].
	\end{align*}
	Then thanks to \cref{[L_mi_L_nj]=(n(i+q)-m(j+q))L_m+n_i+j,[L_mi_G_nj],vf_rs(L_mi)=d^0_rs(m_i)G_m+r_i+s} we get \cref{vf_rs[L_mi_L_nj]-Dl^1-general}. Similarly, $\vf=\vf_{r,s}$, $x=L_{m,i}$ and $y=G_{n,j}$ in \cref{vf([x_y])-for-x-even} result in
	\begin{align*}
	2\vf_{r,s}([L_{m,i},G_{n,j}])&= [\vf_{r,s}(L_{m,i}),G_{n,j}]+[L_{m,i}, \vf_{r,s}(G_{n,j})]\\
	&=d^0_{r,s}(m,i)[G_{m+r,i+s},G_{n,j}]+d^1_{r,s}(n,j)[L_{m,i},L_{n+r,j+s}].
	\end{align*}
	So, using \cref{[L_mi_L_nj]=(n(i+q)-m(j+q))L_m+n_i+j,[L_mi_G_nj],vf_rs(L_mi)=d^0_rs(m_i)G_m+r_i+s} we come to \cref{vf_rs[L_mi_G_nj]-Dl^1-general}. Now take $\vf=\vf_{r,s}$, $x=G_{m,i}$ and $y=G_{n,j}$ in \cref{vf([x_y])-for-x-odd} and use \cref{vf_rs(G_mi)=d^1_rs(m_i)L_m+r_i+s}:
	\begin{align*}
	2\varphi_{r,s}([G_{m,i},G_{n,j}])&= [\varphi_{r,s}(G_{m,i}),G_{n,j}]-[G_{m,i}, \varphi_{r,s}(G_{n,j})]\\	
	&=d^1_{r,s}(m,i)[L_{m+r,i+s},G_{n,j}]-d^1_{r,s}(n,j)[G_{m,i}, L_{n+r,j+s}].	
	\end{align*}
	Applying \cref{[L_mi_G_nj],[G_mi_G_nj],vf_rs(L_mi)=d^0_rs(m_i)G_m+r_i+s}, we arrive at \cref{vf_rsG_mi_G_nj]-Dl^1-general}.
\end{proof}

Let $\vf_{r,s}\in\Dl^1(\S(q))$ and $\psi_{m,i}$ be the left multiplication by $G_{m,i}$ in $\S(q)$. Then $\psi_{m,i}$ is an odd superderivation of $\S(q)$, so that the supercommutator $[\vf_{r,s},\psi_{m,i}]=\vf_{r,s}\circ\psi_{m,i}+\psi_{m,i}\circ\vf_{r,s}$ is an even $\frac 12$-superderivation of $\S(q)$, whose description was given in \cref{Dl(S(q))=<id_af_bt>}. So, on the one hand,
  \begin{align}
	&[\vf_{r,s},\psi_{m,i}](L_{n,j})=\vf_{r,s}([G_{m,i},L_{n,j}])+[G_{m,i},\vf_{r,s}(L_{n,j})]\notag\\
	&=\left(n\left(i+\frac q2\right)-m(j+q)\right)\vf_{r,s}(G_{m+n,i+j})+[G_{m,i},d^0_{r,s}(n,j)G_{n+r,j+s}]\notag\\
	&=\left(n\left(i+\frac q2\right)-m(j+q)\right)d^1_{r,s}(m+n,i+j)L_{m+n+r,i+j+s}+2q\cdot d^0_{r,s}(n,j)L_{m+n+r,i+j+s}\notag\\
	&=\left(\left(n\left(i+\frac q2\right)-m(j+q)\right)d^1_{r,s}(m+n,i+j)+2q\cdot d^0_{r,s}(n,j)\right)L_{m+n+r,i+j+s},\label{[vf_rs_psi_mi](L_nj)-in-terms-of-d}\\
	&[\vf_{r,s},\psi_{m,i}](G_{n,j})=\vf_{r,s}([G_{m,i},G_{n,j}])+[G_{m,i},\vf_{r,s}(G_{n,j})]\notag\\
	&=2q\cdot\vf_{r,s}(L_{m+n,i+j})+[G_{m,i},d^1_{r,s}(n,j)L_{n+r,j+s}]\notag\\
	&=2q\cdot d^0_{r,s}(m+n,i+j)G_{m+n+r,i+j+s} -\left(m(j+s+q)-(n+r)\left(i+\frac q2\right)\right)d^1_{r,s}(n,j)G_{m+n+r,i+j+s}\notag\\
	&=\left(2q\cdot d^0_{r,s}(m+n,i+j)-\left(m(j+s+q)-(n+r)\left(i+\frac q2\right)\right)d^1_{r,s}(n,j)\right)G_{m+n+r,i+j+s}.\label{[vf_rs_psi_mi](G_nj)-in-terms-of-d}
\end{align} 
And on the other hand, if $q\ne 0$, then
\begin{align}
[\vf_{r,s},\psi_{m,i}](L_{n,j})&=
\begin{cases}
0, & (r+m,s+i)\ne (0,0),\\
cL_{n,j}, & (r+m,s+i)=(0,0),
\end{cases}\label{[vf_rs_psi_mi](L_nj)-multiple-L_nj-q-ne-0}\\
[\vf_{r,s},\psi_{m,i}](G_{n,j})&=
\begin{cases}
0, & (r+m,s+i)\ne (0,0),\\
cG_{n,j}, & (r+m,s+i)=(0,0),
\end{cases}\label{[vf_rs_psi_mi](G_nj)-multiple-G_nj-q-ne-0}
\end{align}
for some constant $c\in\C$. And if $q=0$, then
\begin{align}
[\vf_{r,s},\psi_{m,i}](L_{n,j})&=
\begin{cases}
0, & (r+m,s+i)\ne (0,0),\\
c_1L_{n,j}, & (r+m,s+i)=(0,0)\text{ and }(n,j)\ne (0,0),\\
c_2L_{n,j}, & (r+m,s+i)=(0,0)\text{ and }(n,j)=(0,0),
\end{cases}\label{[vf_rs_psi_mi](L_nj)-multiple-L_nj-q=0}\\
[\vf_{r,s},\psi_{m,i}](G_{n,j})&=
\begin{cases}
0, & (r+m,s+i)\ne (0,0),\\
c_1G_{n,j}, & (r+m,s+i)=(0,0)\text{ and }(n,j)\ne (0,0),\\
c_3G_{n,j}, & (r+m,s+i)=(0,0)\text{ and }(n,j)=(0,0),
\end{cases}\label{[vf_rs_psi_mi](G_nj)-multiple-G_nj-q=0}
\end{align}
for some constants $c_1,c_2,c_3\in\C$.

\begin{lem}\label{vf_rs-in-Dl^1_half-for-q-ne-0}
	Let $q\ne 0$ and $\vf\in\Dl^1(\S(q))$. If $q\not\in 2\Z$, then $\vf_{r,s}=0$. Otherwise, $\vf_{r,s}=0$ for all $(r,s)\ne\left(0,\frac q2\right)$ and $\vf_{0,\frac q2}(L_{m,i})=0$ for all $m,i\in\Z$,
	$\vf_{0,\frac q2}(G_{m,i})=0$ for all $(m,i)\ne\left(0,-\frac{3q}2\right)$.
\end{lem}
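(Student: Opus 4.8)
The plan is to run the supercommutator trick set up just before the statement. For $\vf_{r,s}\in\Dl^1(\S(q))$ and $\psi_{m,i}$ the odd left multiplication by $G_{m,i}$, the even map $[\vf_{r,s},\psi_{m,i}]$ is an even $\tfrac 12$-superderivation, hence a scalar multiple of $\id$ because $\Dl^0(\S(q))=\gen\id$ for $q\ne 0$ by \cref{Dl(S(q))=<id_af_bt>}. Matching the explicit expressions \cref{[vf_rs_psi_mi](L_nj)-in-terms-of-d,[vf_rs_psi_mi](G_nj)-in-terms-of-d} against the scalar forms \cref{[vf_rs_psi_mi](L_nj)-multiple-L_nj-q-ne-0,[vf_rs_psi_mi](G_nj)-multiple-G_nj-q-ne-0} turns the condition $\vf_{r,s}\in\Dl^1(\S(q))$ into numerical relations on the coefficients $d^0_{r,s}$ and $d^1_{r,s}$. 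The decisive observation is that $[\vf_{r,s},\psi_{m,i}]$ shifts the $\Z\times\Z$-grading by $(m+r,i+s)$, so it can be a \emph{nonzero} multiple of $\id$ only when $(m+r,i+s)=(0,0)$.

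First I would fix $(r,s)$ and take any $(m,i)$ with $(m+r,i+s)\ne(0,0)$, which forces $[\vf_{r,s},\psi_{m,i}]=0$ and hence, for all $(n,j)$,
\[
\left(n\left(i+\tfrac q2\right)-m(j+q)\right)d^1_{r,s}(m+n,i+j)+2q\,d^0_{r,s}(n,j)=0,
\]
together with the companion relation coming from the $G$-action,
\[
2q\,d^0_{r,s}(m+n,i+j)-\left(m(j+s+q)-(n+r)\left(i+\tfrac q2\right)\right)d^1_{r,s}(n,j)=0.
\]
Putting $n=j=0$ in the first relation gives $d^0_{r,s}(0,0)=0$ (take $m=0$) and $m\,d^1_{r,s}(m,i)=0$, so $d^1_{r,s}(m,i)=0$ whenever $m\ne 0$ and $(m+r,i+s)\ne(0,0)$. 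Feeding this back into the first relation with a generic $(m,i)$ — chosen so that $d^1_{r,s}(m+n,i+j)$ already vanishes while $n(i+\tfrac q2)-m(j+q)\ne0$ — then kills $d^0_{r,s}(n,j)$ for every $(n,j)$ except possibly $(0,-q)$.

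Next I would use the single grading-preserving choice $(m,i)=(-r,-s)$, where $[\vf_{r,s},\psi_{-r,-s}]=c\,\id$. Evaluating \cref{[vf_rs_psi_mi](L_nj)-in-terms-of-d} on a generic $L_{n,j}$ (where all relevant coefficients vanish) shows $c=0$, after which the same relation reads
\[
\left(n\left(-s+\tfrac q2\right)+r(j+q)\right)d^1_{r,s}(n-r,j-s)+2q\,d^0_{r,s}(n,j)=0.
\]
For $r\ne0$ this yields $d^0_{r,s}(0,-q)=0$ (the index $(n,j)=(0,-q)$ annihilates the first term), $d^1_{r,s}(-r,-s)=0$ (from $(n,j)=(0,0)$, where the coefficient is $rq\ne0$), and $d^1_{r,s}(0,i)=0$ for $i\ne-\tfrac{3q}2$ (from $n=r$); for $r=0$ it gives $d^0_{0,s}\equiv0$ directly. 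In both cases $d^0_{r,s}\equiv0$ and $d^1_{r,s}$ is now supported on at most the value $(0,-\tfrac{3q}2)$ when $r\ne0$, and on the line $\{m=0\}$ when $r=0$.

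The delicate final step is to remove the resonant entry $d^1_{r,s}(0,-\tfrac{3q}2)$, and this is where I expect the main obstacle: substituting its index into any of the commutator relations makes the accompanying coefficient vanish identically, so the $\psi$-trick is powerless and I must return to the genuine identities \cref{vf_rs[L_mi_G_nj]-Dl^1-general,vf_rsG_mi_G_nj]-Dl^1-general}. For $r\ne0$, inserting $(m,i)=(0,-\tfrac{3q}2)$ and a generic $(n,j)$ into \cref{vf_rsG_mi_G_nj]-Dl^1-general} produces the coefficient $n\left(s-\tfrac q2\right)-r\left(j+\tfrac q2\right)$, which is a non-constant function of $j$ precisely because $r\ne0$; choosing $j$ suitably forces $d^1_{r,s}(0,-\tfrac{3q}2)=0$, so $\vf_{r,s}=0$. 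For $r=0$, equation \cref{vf_rs[L_mi_G_nj]-Dl^1-general} with $n=0$, $m\ne0$ confines the support of $d^1_{0,s}(0,\cdot)$ to $i=-s-q$, whereas the same identity with $n=-m$ confines it to $i=-\tfrac{3q}2$; these are compatible only if $s=\tfrac q2$. Since $s\in\Z$, this can happen exactly when $q\in2\Z$, which yields the stated dichotomy: $\vf_{r,s}=0$ for all $(r,s)$ when $q\notin2\Z$, while for $q\in2\Z\setminus\{0\}$ only $\vf_{0,q/2}$ survives, with $d^0_{0,q/2}\equiv0$ and $d^1_{0,q/2}$ supported at $(0,-\tfrac{3q}2)$, exactly as claimed.
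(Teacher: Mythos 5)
Your proposal is correct and follows essentially the same route as the paper: the supercommutator $[\vf_{r,s},\psi_{m,i}]$ combined with $\Dl^0(\S(q))=\gen\id$ yields the same two scalar relations for $(m,i)\ne(-r,-s)$, and your substitutions (including the return to \cref{vf_rs[L_mi_G_nj]-Dl^1-general,vf_rsG_mi_G_nj]-Dl^1-general} for the resonant entry $d^1_{r,s}\left(0,-\frac{3q}{2}\right)$, where the paper instead uses the second commutator relation with $n=0$, $j=-\frac{3q}{2}$) reach the same conclusion. The only cosmetic remark is that the exception you flag at $(n,j)=(0,-q)$ is unnecessary — once $d^1_{r,s}(m+n,i+j)=0$ the first term vanishes regardless of whether the coefficient $n\left(i+\frac q2\right)-m(j+q)$ is nonzero — but you handle that index correctly afterwards, so there is no gap.
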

\begin{proof}
	By \cref{[vf_rs_psi_mi](L_nj)-in-terms-of-d,[vf_rs_psi_mi](L_nj)-multiple-L_nj-q-ne-0,[vf_rs_psi_mi](G_nj)-in-terms-of-d,[vf_rs_psi_mi](G_nj)-multiple-G_nj-q-ne-0} we have for all $(m,i)\ne (-r,-s)$
	\begin{align}
		\left(n\left(i+\frac q2\right)-m(j+q)\right)d^1_{r,s}(m+n,i+j)+2q\cdot d^0_{r,s}(n,j)&=0,\label{(n(i+q-over-2)-m(j+q))d^1_rs(m+n_i+j)+2qd^0_rs(n_j)=0}\\
		2q\cdot d^0_{r,s}(m+n,i+j)-\left(m(j+s+q)-(n+r)\left(i+\frac q2\right)\right)d^1_{r,s}(n,j)&=0.\label{2qd^0_rs(m+n_i+j)-(m(j+s+q)-(n+r)(i+ q-over-2))d^1_rs(n_j)=0}
	\end{align}
Taking $m=n=0$ and $i\ne -s$ in \cref{(n(i+q-over-2)-m(j+q))d^1_rs(m+n_i+j)+2qd^0_rs(n_j)=0}, we obtain 
\begin{align*}
	d^0_{r,s}(0,j)=0\text{ for all }(r,s)\text{ and for all }j.
\end{align*}
Then $n=0$, $m\ne 0$ and $j\ne -q$ in \cref{(n(i+q-over-2)-m(j+q))d^1_rs(m+n_i+j)+2qd^0_rs(n_j)=0} give $d^1_{r,s}(m,i+j)=0$. Since any $k\in\Z$ can be written as $i+j$ with $i\ne -s$ and $j\ne -q$ (by choosing $j\not\in\{-q,k+s\}$), we obtain
\begin{align*}
	d^1_{r,s}(m,k)=0\text{ for all }(r,s)\text{ and for all }(m,k)\text{ with }m\ne 0.
\end{align*} 
Hence, $m=0$, $n\ne 0$ and $i\ne -s$ in \cref{(n(i+q-over-2)-m(j+q))d^1_rs(m+n_i+j)+2qd^0_rs(n_j)=0} yield 
\begin{align*}
	d^0_{r,s}(n,j)=0\text{ for all }(r,s)\text{ and for all }(n,j)\text{ with }n\ne 0.
\end{align*}
Thus, 
\begin{align*}
	d^0_{r,s}=0\text{ for all }(r,s).
\end{align*}
Now substitute $m=-n\ne 0$ and $i\ne -s$ into \cref{(n(i+q-over-2)-m(j+q))d^1_rs(m+n_i+j)+2qd^0_rs(n_j)=0} to get $\left(i+j+\frac {3q}2\right)d^1_{r,s}(0,i+j)=0$, so 
\begin{align*}
	d^1_{r,s}(0,k)=0\text{ for all }(r,s)\text{ and for all }k\ne-\frac{3q}2.
\end{align*}
Using \cref{2qd^0_rs(m+n_i+j)-(m(j+s+q)-(n+r)(i+ q-over-2))d^1_rs(n_j)=0} with $n=0$ and $j=-\frac{3q}2$, we come to $\left(m(s-\frac q2)-r\left(i+\frac q2\right)\right)d^1_{r,s}\left(0,-\frac{3q}2\right)=0$. If $r\ne 0$, then choosing $i\not\in\{\frac mr(s-\frac q2)-\frac q2,-s\}$ we conclude that $d^1_{r,s}\left(0,-\frac{3q}2\right)=0$. Otherwise, choosing $m\ne 0$ (so that $m\ne -r$), we see that $d^1_{0,s}\left(0,-\frac{3q}2\right)=0$ unless $s=\frac q2$.
\end{proof}

\begin{lem}\label{vf_rs-in-Dl^1_half-for-q=0}
	Let $\vf\in\Dl^1(\S(0))$. Then $\vf_{r,s}=0$ for all $(r,s)\ne(0,0)$, $\vf_{0,0}(G_{m,i})=0$ for all $(m,i)\ne (0,0)$, $\vf_{0,0}(L_{m,i})=cG_{m,i}$ for all $(m,i)\ne (0,0)$ and some constant $c\in\C$.
\end{lem}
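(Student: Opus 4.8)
The plan is to use the description of $\Dl^1(\S(0))$ from \cref{d^0-and-d^1-in-Delta^1_half} together with the observation that, when $q=0$, the three governing relations decouple into one constraint on the family $d^0_{r,s}$ and two constraints on the family $d^1_{r,s}$, each of which is already resolved by a result (or an argument) available in the paper. Concretely, setting $q=0$ kills every term carrying a factor $q$, so that \cref{vf_rs[L_mi_L_nj]-Dl^1-general} involves only $d^0_{r,s}$, while \cref{vf_rs[L_mi_G_nj]-Dl^1-general,vf_rsG_mi_G_nj]-Dl^1-general} involve only $d^1_{r,s}$. I would therefore treat the two families separately.

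For $d^0_{r,s}$ I would note that \cref{vf_rs[L_mi_L_nj]-Dl^1-general} at $q=0$ is literally \cref{one-half-der-in-terms-of-d_rs-for-q=0} with $d^0_{r,s}$ in place of $d_{r,s}$. Hence $d^0_{r,s}$ obeys exactly the relations satisfied by the coefficient family of a $\frac 12$-derivation of $\B(0)$, and \cref{d_rs=0-for(r_s)-ne-(0_0),d_00(m_i)=d_00(m'_i')} apply verbatim: $d^0_{r,s}=0$ for $(r,s)\ne(0,0)$, and $d^0_{0,0}(m,i)$ equals a single constant $c$ for every $(m,i)\ne(0,0)$. Reading this back through \cref{vf_rs(L_mi)=d^0_rs(m_i)G_m+r_i+s} gives $\vf_{r,s}(L_{m,i})=0$ for $(r,s)\ne(0,0)$ and $\vf_{0,0}(L_{m,i})=cG_{m,i}$ for $(m,i)\ne(0,0)$.

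For $d^1_{r,s}$ with $(r,s)\ne(0,0)$, the relation \cref{vf_rs[L_mi_G_nj]-Dl^1-general} at $q=0$ coincides with \cref{2(ni-mj)d^1_rs(m+n_i+j)=((n+r)i-m(j+s))d^1_rs(n_j)} from the proof of \cref{vf_rs-for-(r_s)-ne-(0_0)-or-(0_q)}; since Case 2 of that proof uses only this single relation, it transfers without change and forces $d^1_{r,s}=0$. Combined with the previous paragraph this yields $\vf_{r,s}=0$ for all $(r,s)\ne(0,0)$. It then remains to analyse $d^1_{0,0}$, for which I would use the two relations with $r=s=0$. Equation \cref{vf_rsG_mi_G_nj]-Dl^1-general} collapses to $(ni-mj)\bigl(d^1_{0,0}(m,i)-d^1_{0,0}(n,j)\bigr)=0$, so $d^1_{0,0}(m,i)=d^1_{0,0}(n,j)$ whenever $ni\ne mj$; since any two nonzero parallel index pairs are each linearly independent from a common third pair, $d^1_{0,0}$ takes one and the same value $e$ on all of $\Z\times\Z\setminus\{(0,0)\}$. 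Then \cref{vf_rs[L_mi_G_nj]-Dl^1-general} with $r=s=0$ reads $2(ni-mj)d^1_{0,0}(m+n,i+j)=(ni-mj)d^1_{0,0}(n,j)$, and evaluating at $(m,i)=(1,0)$, $(n,j)=(0,1)$ gives $2e=e$, so $e=0$. Thus $\vf_{0,0}(G_{m,i})=0$ for $(m,i)\ne(0,0)$, which completes the statement (with $c$ the constant found above).

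The main obstacle here is conceptual rather than computational: recognising that at $q=0$ the defining system splits so that $d^0$ and $d^1$ each reduce to relations already settled earlier in the paper. The only genuinely new computation is the treatment of $d^1_{0,0}$, whose delicate point is the passage from ``constant on linearly independent pairs'' to ``globally constant on nonzero weights''; once that is in place the normalisation $2e=e$ immediately forces $e=0$.
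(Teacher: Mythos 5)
Your proof is correct, and for the odd part $d^1_{r,s}$ it takes a genuinely different route from the paper. Your treatment of $d^0_{r,s}$ (reducing \cref{vf_rs[L_mi_L_nj]-Dl^1-general} at $q=0$ to \cref{one-half-der-in-terms-of-d_rs-for-q=0} and invoking \cref{d_rs=0-for(r_s)-ne-(0_0),d_00(m_i)=d_00(m'_i')}) is exactly what the paper does. For $d^1_{r,s}$, however, the paper does not work with the defining identities \cref{vf_rs[L_mi_G_nj]-Dl^1-general,vf_rsG_mi_G_nj]-Dl^1-general} directly: it forms the supercommutator $[\vf_{r,s},\psi_{m,i}]$ with the odd inner superderivation $\psi_{m,i}=\ad G_{m,i}$, observes that this is an \emph{even} $\frac12$-superderivation, and then uses the already established \cref{Dl(S(q))=<id_af_bt>} to force it to vanish whenever $(r+m,s+i)\ne(0,0)$. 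This yields the pointwise relations \cref{(ni-mj)d^1_rs(m+n_i+j)=0,(m(j+s)-i(n+r))d^1_rs(n_j)=0}, in which each summand vanishes separately, so $d^1_{r,s}$ is killed by a very short case analysis. Your argument instead stays with the raw identities: at $q=0$ the relation \cref{vf_rs[L_mi_G_nj]-Dl^1-general} indeed coincides with \cref{2(ni-mj)d^1_rs(m+n_i+j)=((n+r)i-m(j+s))d^1_rs(n_j)}, and Case 2 of the proof of \cref{vf_rs-for-(r_s)-ne-(0_0)-or-(0_q)} uses only that single equation, so the transfer for $(r,s)\ne(0,0)$ is legitimate; your reduction of \cref{vf_rsG_mi_G_nj]-Dl^1-general} at $(r,s)=(0,0)$ to $(ni-mj)(d^1_{0,0}(m,i)-d^1_{0,0}(n,j))=0$, the passage through a common third non-parallel index pair, and the normalisation $2e=e$ at $(m,i)=(1,0)$, $(n,j)=(0,1)$ are all correct. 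The trade-off: the paper's commutator trick buys stronger, cleaner constraints at the cost of importing the even classification, while your approach is more self-contained but has to re-run the longer sub-case analysis from the even setting; both are complete proofs of the statement.
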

\begin{proof}
	By \cref{[vf_rs_psi_mi](L_nj)-in-terms-of-d,[vf_rs_psi_mi](L_nj)-multiple-L_nj-q=0,[vf_rs_psi_mi](G_nj)-in-terms-of-d,[vf_rs_psi_mi](G_nj)-multiple-G_nj-q=0} we have for all $(m,i)\ne (-r,-s)$
	\begin{align}
	(ni-mj)d^1_{r,s}(m+n,i+j)&=0,\label{(ni-mj)d^1_rs(m+n_i+j)=0}\\
	(m(j+s)-i(n+r))d^1_{r,s}(n,j)&=0.\label{(m(j+s)-i(n+r))d^1_rs(n_j)=0}
	\end{align}
If $j\ne -s$, then choosing $m\not\in\{\frac{i(n+r)}{j+s},-r\}$ in \cref{(m(j+s)-i(n+r))d^1_rs(n_j)=0} we get $d^1_{r,s}(n,j)=0$. And if $n\ne -r$, then choosing $i\not\in\{\frac{m(j+s)}{n+r},-s\}$ in \cref{(m(j+s)-i(n+r))d^1_rs(n_j)=0} we again obtain $d^1_{r,s}(n,j)=0$. Thus, 
\begin{align*}
	d^1_{r,s}(n,j)=0\text{ for all }(n,j)\ne(-r,-s).
\end{align*}
Let $(r,s)\ne (0,0)$. Then take $m=-r-n$ and $j=-s-i$ in \cref{(ni-mj)d^1_rs(m+n_i+j)=0}. Since $ni-mj=ni-(r+n)(s+i)=-rs-ns-ri$, we have two cases. If $r\ne 0$, then choosing $i\not\in\{-\frac{ns}r-s,-s\}$, we conclude that $d^1_{r,s}(-r,-s)=0$. Otherwise, choosing $n\ne 0$ (so that $m=-n\ne -r$), we again come to $d^1_{r,s}(-r,-s)=0$. Thus, 
\begin{align*}
	d^1_{r,s}=0\text{ for all }(r,s)\ne(0,0).
\end{align*}
 
Regarding $d^0_{r,s}$, we see that it satisfies \cref{vf_rs[L_mi_L_nj]-Dl^1-general} with $q=0$, which has exactly the same form as \cref{one-half-der-in-terms-of-d_rs-for-q=0}. Applying the proofs of \cref{d_rs=0-for(r_s)-ne-(0_0),d_00(m_i)=d_00(m'_i')} to $d^0_{r,s}$, we conclude that 
\begin{align*}
	d^0_{r,s}&=0\text{ for all }(r,s)\ne(0,0),\\
	d^0_{0,0}(m,i)&=d^0_{0,0}(m',i')\text{ for all }(m,i),(m',i')\in\Z\times\Z\setminus\{(0,0)\}.
\end{align*}
\end{proof}

\begin{lem}\label{gamma-half-der-S(q)}
	Let $q\in 2\Z$. Then the linear map $\gm:\S(q)\to\S(q)$ such that 
	\begin{align}\label{gm(G_mi)=0-or-L_0-q}
	\gm(L_{m,i})=0,\ \gm(G_{m,i})&=
	\begin{cases}
	0, & (m,i)\ne\left(0,-\frac{3q}2\right),\\
	L_{0,-q}, & (m,i)=\left(0,-\frac{3q}2\right),
	\end{cases}
	\end{align}
	is an odd $\frac 12$-derivation of $\S(q)$.
\end{lem}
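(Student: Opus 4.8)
The plan is to recognise $\gm$ as a single homogeneous component in the decomposition used for odd $\frac12$-superderivations and then to invoke \cref{d^0-and-d^1-in-Delta^1_half}. First I would note that, since $q\in 2\Z$, both $\frac q2$ and $\frac{3q}2$ lie in $\Z$, so the index $\left(0,-\frac{3q}2\right)$ is legitimate and $\gm$ is well defined. Comparing \cref{gm(G_mi)=0-or-L_0-q} with \cref{vf_rs(L_mi)=d^0_rs(m_i)G_m+r_i+s,vf_rs(G_mi)=d^1_rs(m_i)L_m+r_i+s} shows that $\gm=\gm_{0,\frac q2}$ with $d^0_{0,q/2}\equiv 0$ and $d^1_{0,q/2}(m,i)=1$ if $(m,i)=\left(0,-\frac{3q}2\right)$ and $0$ otherwise. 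As $\gm$ is odd, it is an odd $\frac12$-superderivation precisely when the three identities \cref{vf_rs[L_mi_L_nj]-Dl^1-general,vf_rs[L_mi_G_nj]-Dl^1-general,vf_rsG_mi_G_nj]-Dl^1-general} hold for $(r,s)=\left(0,\frac q2\right)$.

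Two of these are immediate. Because $d^0_{0,q/2}\equiv 0$, \cref{vf_rs[L_mi_L_nj]-Dl^1-general} reduces to $0=0$, and the left-hand side of \cref{vf_rsG_mi_G_nj]-Dl^1-general} vanishes as well. For the right-hand side of \cref{vf_rsG_mi_G_nj]-Dl^1-general} I would argue by support: each of its two terms carries a factor $d^1_{0,q/2}(m,i)$ or $d^1_{0,q/2}(n,j)$, which is nonzero only at $\left(0,-\frac{3q}2\right)$; substituting $(m,i)=\left(0,-\frac{3q}2\right)$ makes the accompanying coefficient collapse to $n\left(-\frac{3q}2+\frac{3q}2\right)=0$, and substituting $(n,j)=\left(0,-\frac{3q}2\right)$ makes the other coefficient collapse to $m\left(-\frac{3q}2+\frac{3q}2\right)=0$, so the whole right-hand side is zero.

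The only identity requiring genuine care is \cref{vf_rs[L_mi_G_nj]-Dl^1-general}, which for $(r,s)=\left(0,\frac q2\right)$ and $d^0_{0,q/2}\equiv 0$ becomes
\[
2\left(n(i+q)-m\left(j+\tfrac q2\right)\right)d^1_{0,q/2}(m+n,i+j)=\left(n(i+q)-m\left(j+\tfrac{3q}2\right)\right)d^1_{0,q/2}(n,j).
\]
Here I would again split by support. The right-hand side is nonzero only when $(n,j)=\left(0,-\frac{3q}2\right)$, in which case its coefficient equals $0\cdot(i+q)-m\left(-\tfrac{3q}2+\tfrac{3q}2\right)=0$. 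The left-hand side is nonzero only when $(m+n,i+j)=\left(0,-\frac{3q}2\right)$, i.e. $m=-n$ and $i+j=-\frac{3q}2$; substituting these, the coefficient becomes $2n\left(i+j+\tfrac{3q}2\right)=0$. Thus both sides vanish identically and the identity holds. The main obstacle is precisely this cancellation on the left-hand side — everything else is bookkeeping — and once it is observed, \cref{d^0-and-d^1-in-Delta^1_half} yields $\gm\in\Dl^1(\S(q))$.
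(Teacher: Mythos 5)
Your proof is correct and follows essentially the same route as the paper: identify $\gm$ as the homogeneous component $\gm_{0,\frac q2}$ with $d^0_{0,\frac q2}=0$ and $d^1_{0,\frac q2}$ supported at $\left(0,-\frac{3q}2\right)$, and verify the three identities of \cref{d^0-and-d^1-in-Delta^1_half} by checking that every potentially nonzero term is killed by its coefficient. The paper organizes this as a four-way case split on which index pair equals $\left(0,-\frac{3q}2\right)$, whereas you argue term-by-term by support, but the computations are identical.
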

\begin{proof}
	Clearly, $\gm=\gm_{0,\frac q2}$. We are going to verify  \cref{vf_rs[L_mi_L_nj]-Dl^1-general,vf_rs[L_mi_G_nj]-Dl^1-general,vf_rsG_mi_G_nj]-Dl^1-general} for $(r,s)=\left(0,\frac q2\right)$ and
	\begin{align*}
	d^0_{0,\frac q2}&=0,\ d^1_{0,\frac q2}(m,i)=
	\begin{cases}
	0, & (m,i)\ne\left(0,-\frac{3q}2\right),\\
	1, & (m,i)=\left(0,-\frac{3q}2\right).
	\end{cases}
	\end{align*}
	Obviously, \cref{vf_rs[L_mi_L_nj]-Dl^1-general} is trivially satisfied.
	
	\textit{Case 1.} $(m,i),(n,j),(m+n,i+j)\ne\left(0,-\frac{3q}2\right)$. Then both sides of \cref{vf_rs[L_mi_G_nj]-Dl^1-general,vf_rsG_mi_G_nj]-Dl^1-general} are zero.
	
	\textit{Case 2.} $(m,i)=\left(0,-\frac{3q}2\right)$. Then \cref{vf_rs[L_mi_G_nj]-Dl^1-general,vf_rsG_mi_G_nj]-Dl^1-general} become
	\begin{align*}
	-qn\cdot d^1_{0,\frac q2}\left(n,j-\frac{3q}2\right)&= -\frac{qn}2\cdot d^1_{0,\frac q2}(n,j),\\	
	0&=qn\cdot d^1_{0,\frac q2}(n,j).
	\end{align*}
	Either $n=0$ or $d^1_{0,\frac q2}\left(n,j-\frac{3q}2\right)=d^1_{0,\frac q2}(n,j)=0$, so both sides of these equalities are zero.
	
	\textit{Case 3.} $(n,j)=\left(0,-\frac{3q}2\right)$. Then \cref{vf_rs[L_mi_G_nj]-Dl^1-general,vf_rsG_mi_G_nj]-Dl^1-general} become
	\begin{align*}
	2qm\cdot d^1_{0,\frac q2}\left(m,i-\frac{3q}2\right)&= 0,\\	
	0&=qm\cdot d^1_{0,\frac q2}(m,i).
	\end{align*}
	Either $m=0$ or $d^1_{0,\frac q2}\left(m,i-\frac{3q}2\right)=d^1_{0,\frac q2}(m,i)=0$, so again both sides are always zero.
	
	\textit{Case 4.} $(m+n,i+j)=\left(0,-\frac{3q}2\right)$. Then Then \cref{vf_rs[L_mi_G_nj]-Dl^1-general,vf_rsG_mi_G_nj]-Dl^1-general} become
	\begin{align*}
	0&= qn\cdot d^1_{0,\frac q2}(n,j),\\	
	0&=\frac{qn}2\cdot d^1_{0,\frac q2}(-n,i)-\frac{qn}2\cdot d^1_{0,\frac q2}(n,j).
	\end{align*}
	Either $n=0$ or $d^1_{0,\frac q2}(n,j)=d^1_{0,\frac q2}(-n,i)=0$, so once again both sides are zero.
\end{proof}

\begin{lem}\label{dl-and-eps-half-der-S(q)}
	The linear maps $\dl,\ve:\S(0)\to\S(0)$ such that 
	\begin{align}
	\dl(G_{m,i})&=0,\ \dl(L_{m,i})=
	\begin{cases}
	0, & (m,i)\ne(0,0),\\
	G_{0,0}, & (m,i)=(0,0),
	\end{cases}\label{dl(G_mi)=0-or-G_00}\\
	\ve(G_{m,i})&=0,\ \ve(L_{m,i})=G_{m,i},\label{ve(L_mi)=G_mi}
	\end{align}
	are odd $\frac 12$-derivations of $\S(0)$.
\end{lem}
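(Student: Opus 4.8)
The plan is to reduce everything to the computational criterion of \cref{d^0-and-d^1-in-Delta^1_half}. First I would note that $\dl$ and $\ve$ are manifestly odd and homogeneous with respect to the $\Z\times\Z$-grading, each being its own $(r,s)=(0,0)$ component; hence membership in $\Dl^1(\S(0))$ is equivalent to $\dl=\dl_{0,0}$ and $\ve=\ve_{0,0}$ satisfying \cref{vf_rs[L_mi_L_nj]-Dl^1-general,vf_rs[L_mi_G_nj]-Dl^1-general,vf_rsG_mi_G_nj]-Dl^1-general} with $q=0$. In the notation of \cref{vf_rs(L_mi)=d^0_rs(m_i)G_m+r_i+s,vf_rs(G_mi)=d^1_rs(m_i)L_m+r_i+s} both maps have $d^1_{0,0}=0$, while $d^0_{0,0}\equiv 1$ for $\ve$ and
\begin{align*}
	d^0_{0,0}(m,i)=\begin{cases}0,&(m,i)\ne(0,0),\\1,&(m,i)=(0,0),\end{cases}
\end{align*}
for $\dl$.

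Next I would observe that, because $q=0$ and $d^1_{0,0}=0$, the conditions \cref{vf_rs[L_mi_G_nj]-Dl^1-general,vf_rsG_mi_G_nj]-Dl^1-general} are automatic: every term in each of them carries a factor of $q$ or of $d^1_{0,0}$, so both sides vanish identically. This leaves only \cref{vf_rs[L_mi_L_nj]-Dl^1-general}, which for $q=0$ and $(r,s)=(0,0)$ becomes
\begin{align*}
	2(ni-mj)d^0_{0,0}(m+n,i+j)=(ni-mj)\bigl(d^0_{0,0}(m,i)+d^0_{0,0}(n,j)\bigr).
\end{align*}
For $\ve$ this is immediate, both sides equalling $2(ni-mj)$.

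The remaining work is the case analysis for $\dl$, which I would organize by whether $ni-mj$ vanishes. If $ni-mj=0$ the displayed identity is trivial; if $ni-mj\ne 0$, then neither $(m,i)$ nor $(n,j)$ is $(0,0)$, and $(m+n,i+j)=(0,0)$ would force $m=-n$, $i=-j$ and hence $ni-mj=0$, a contradiction---so all three points $(m,i),(n,j),(m+n,i+j)$ avoid the origin, $d^0_{0,0}$ kills each of them, and both sides are again zero. I do not expect a genuine obstacle: the entire difficulty has already been absorbed into \cref{d^0-and-d^1-in-Delta^1_half}, and the only point demanding a little care is this elementary observation that $ni-mj\ne 0$ simultaneously excludes $(m,i)$, $(n,j)$, and $(m+n,i+j)$ from being $(0,0)$.
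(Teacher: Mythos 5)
Your proposal is correct and follows essentially the same route as the paper: both reduce to the criterion of \cref{d^0-and-d^1-in-Delta^1_half}, note that \cref{vf_rs[L_mi_G_nj]-Dl^1-general,vf_rsG_mi_G_nj]-Dl^1-general} hold trivially since $q=0$ and $d^1_{0,0}=0$, and then check \cref{vf_rs[L_mi_L_nj]-Dl^1-general} via the observation that $(0,0)\in\{(m,i),(n,j),(m+n,i+j)\}$ forces $ni-mj=0$ (the paper states this as the contrapositive of your case split, but it is the same argument).
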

\begin{proof}
	We first prove that $\dl\in\Dl^1(\S(0))$. Clearly, $\dl=\dl_{0,0}$, where
	\begin{align}\label{d^0_00-of-dl(m_i)=0-or-1}
	d^0_{0,0}(m,i)=
	\begin{cases}
	0, & (m,i)\ne(0,0),\\
	1, & (m,i)=(0,0)
	\end{cases}
	\end{align}
	and $d^1_{0,0}=0$. Equalities \cref{vf_rs[L_mi_G_nj]-Dl^1-general,vf_rsG_mi_G_nj]-Dl^1-general} are trivially satisfied, while \cref{vf_rs[L_mi_L_nj]-Dl^1-general} reduces to
	\begin{align}\label{2(ni-mj)d^0_00(m+n_i+j)=(ni-mj)(d^0_00(m_i)+d^0_00(n_j))}
			2(ni-mj)d^0_{0,0}(m+n,i+j)=(ni-mj)(d^0_{0,0}(m,i)+d^0_{0,0}(n,j)).
	\end{align}
	If $(0,0)\in\{(m,i),(n,j),(m+n,i+j)\}$, then $ni-mj=0$. Otherwise, $d^0_{0,0}(m+n,i+j)=d^0_{0,0}(m,i)=d^0_{0,0}(n,j)=0$ by \cref{d^0_00-of-dl(m_i)=0-or-1}. Thus, $\dl\in\Dl^1(\S(0))$ by \cref{d^0-and-d^1-in-Delta^1_half}.
	
	Now, let us prove that $\ve\in\Dl^1(\S(0))$. Again, $\ve=\ve_{0,0}$, but now
	\begin{align}\label{d^0_00-of-dl(m_i)=1}
	d^0_{0,0}(m,i)=1\text{ for all }(m,i)
	\end{align}
	and $d^1_{0,0}=0$. As above, \cref{vf_rs[L_mi_G_nj]-Dl^1-general,vf_rsG_mi_G_nj]-Dl^1-general} are trivial and \cref{vf_rs[L_mi_L_nj]-Dl^1-general} reduces to \cref{2(ni-mj)d^0_00(m+n_i+j)=(ni-mj)(d^0_00(m_i)+d^0_00(n_j))}. In view of \cref{d^0_00-of-dl(m_i)=1} the latter is also satisfied. So, $\ve\in\Dl^1(\S(0))$ by \cref{d^0-and-d^1-in-Delta^1_half}.
\end{proof}

\begin{prop}\label{Dl^1(S(q))=<gm_dl_ve>}
	For all $q\in\C$ we have
	\begin{align*}
	\Dl^1(\S(q))=
	\begin{cases}
	\{0\}, & q\not\in 2\Z,\\
	\gen{\gm}, & q\in 2\Z\setminus\{0\},\\
	\gen{\gm,\dl,\ve}, & q=0.
	\end{cases}
	\end{align*}
\end{prop}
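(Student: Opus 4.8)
The plan is to deduce the proposition from the structural lemmas already in place, splitting into the three cases of the statement according to whether $q\not\in 2\Z$, $q\in 2\Z\setminus\{0\}$, or $q=0$. In each case two inclusions must be checked: that every $\vf\in\Dl^1(\S(q))$ lies in the asserted span — this is supplied by \cref{vf_rs-in-Dl^1_half-for-q-ne-0,vf_rs-in-Dl^1_half-for-q=0} — and that each proposed generator actually belongs to $\Dl^1(\S(q))$ — this is \cref{gamma-half-der-S(q),dl-and-eps-half-der-S(q)}. Throughout I use that $\vf=\sum_{r,s}\vf_{r,s}$ and that $\vf\in\Dl^1(\S(q))$ if and only if every homogeneous component $\vf_{r,s}$ does.

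First, for $q\not\in 2\Z$ (so in particular $q\ne 0$), \cref{vf_rs-in-Dl^1_half-for-q-ne-0} gives $\vf_{r,s}=0$ for all $r,s$, hence $\vf=0$ and $\Dl^1(\S(q))=\{0\}$. Next, for $q\in 2\Z\setminus\{0\}$ the same lemma forces $\vf_{r,s}=0$ for all $(r,s)\ne\left(0,\frac q2\right)$ and shows that $\vf_{0,\frac q2}$ annihilates every $L_{m,i}$ and every $G_{m,i}$ with $(m,i)\ne\left(0,-\frac{3q}2\right)$; thus $\vf=\vf_{0,\frac q2}$ is determined by the single scalar $d^1_{0,\frac q2}\left(0,-\frac{3q}2\right)$, i.e.\ $\vf$ is a scalar multiple of $\gm$. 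Conversely $\gm\in\Dl^1(\S(q))$ by \cref{gamma-half-der-S(q)}, whence $\Dl^1(\S(q))=\gen\gm$.

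The only case requiring a genuine computation is $q=0$. By \cref{vf_rs-in-Dl^1_half-for-q=0}, any $\vf\in\Dl^1(\S(0))$ equals $\vf_{0,0}$ and is pinned down by exactly three free scalars: $c=d^0_{0,0}(m,i)$, a common value for all $(m,i)\ne(0,0)$, together with the origin values $a=d^0_{0,0}(0,0)$ and $b=d^1_{0,0}(0,0)$, while $d^1_{0,0}(m,i)=0$ for $(m,i)\ne(0,0)$. Comparing the actions of $\gm,\dl,\ve$ on the basis — $\gm$ contributes only the origin value $b$ on $G_{0,0}$, $\dl$ only the origin value $a$ on $L_{0,0}$, and $\ve$ the common value $c$ on all $L_{m,i}$ (including an origin contribution) — one checks directly that $\vf=b\gm+(a-c)\dl+c\ve$, giving $\Dl^1(\S(0))\sst\gen{\gm,\dl,\ve}$.

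Since $\gm,\dl,\ve$ are linearly independent (their coordinates in $(a,b,c)$ are $(0,1,0)$, $(1,0,0)$, $(1,0,1)$) and all lie in $\Dl^1(\S(0))$ by \cref{gamma-half-der-S(q),dl-and-eps-half-der-S(q)} — note that $0\in 2\Z$, so the $\gm$ lemma applies — the reverse inclusion holds and $\Dl^1(\S(0))=\gen{\gm,\dl,\ve}$. I expect the main obstacle to be precisely this last bookkeeping in the $q=0$ case: correctly separating the origin values $a,b$ from the bulk value $c$, matching the three free parameters to the three generators via the explicit formula $\vf=b\gm+(a-c)\dl+c\ve$, and confirming the linear independence that makes the span three-dimensional.
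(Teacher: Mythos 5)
Your proposal is correct and follows exactly the paper's route: the paper's own proof is a one-line citation of \cref{vf_rs-in-Dl^1_half-for-q-ne-0,vf_rs-in-Dl^1_half-for-q=0,gamma-half-der-S(q),dl-and-eps-half-der-S(q)}, and you have simply made explicit the linear-algebra bookkeeping (the decomposition $\vf=b\gm+(a-c)\dl+c\ve$ in the $q=0$ case) that the paper leaves implicit.
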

\begin{proof}
	A consequence of \cref{vf_rs-in-Dl^1_half-for-q-ne-0,vf_rs-in-Dl^1_half-for-q=0,gamma-half-der-S(q),dl-and-eps-half-der-S(q)}.
\end{proof}

Observe that $\gm,\dl,\ve$ from \cref{Dl^1(S(q))=<gm_dl_ve>} are odd analogues of $\af,\bt,\id$ from \cref{Dl(S(q))=<id_af_bt>}.

\medskip 

 Filippov proved that each nonzero $\delta$-derivation ($\delta\neq0,1$) of a Lie algebra
 gives a non-trivial ${\rm Hom}$-Lie algebra structure \cite[Theorem 1]{fil1}.
 It is easy to see that a superanalog of his result is also true.
 Hence, by Proposition \ref{Dl^1(S(q))=<gm_dl_ve>} we have the following corollary.

 \begin{cor}
$\S(q)_{q\in2\Z}$ admits non-trivial ${\rm Hom}$-Lie superalgebra structures.
 \end{cor}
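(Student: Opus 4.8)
The plan is to derive the corollary directly from \cref{Dl^1(S(q))=<gm_dl_ve>} together with the super-analogue of \cite[Theorem 1]{fil1} quoted just above. The first observation I would make is purely definitional: a $\frac 12$-superderivation is precisely a $\delta$-superderivation in Filippov's sense with $\delta=\frac 12$, and $\frac 12\notin\{0,1\}$. By \cref{Dl^1(S(q))=<gm_dl_ve>}, for every $q\in 2\Z$ the space $\Dl^1(\S(q))$ contains the nonzero odd map $\gm$ (and, when $q=0$, also $\dl$ and $\ve$). Hence $\S(q)$ carries a nonzero $\frac 12$-superderivation as soon as $q\in 2\Z$, which is exactly the input the theorem requires.

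The substance of the argument lies in the super-analogue of Filippov's result, asserting that a nonzero $\delta$-superderivation $\vf$ with $\delta\neq 0,1$ makes $(\S(q),[\cdot,\cdot],\vf)$ a non-trivial Hom-Lie superalgebra, the twisting map being $\vf$ itself. To prove this I would apply the defining identity of a $\frac 12$-superderivation to $\vf([x,[y,z]])$, expand $\vf([y,z])$ a second time, and then sum over the graded cyclic permutations of $x,y,z$. The left-hand side vanishes because $\vf$ kills the graded Jacobian, which is zero by the super Jacobi identity. On the right-hand side the terms carrying a single factor of $\delta$ assemble into the Hom-super-Jacobian $J(\vf)$, while the terms carrying $\delta^2$ collapse, after one application of the super Jacobi identity, to $-J(\vf)$. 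This yields $\delta(1-\delta)J(\vf)=0$, and since $\delta(1-\delta)\neq 0$ we obtain $J(\vf)=0$, i.e. the Hom-super-Jacobi identity holds for the twist $\vf$.

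It then remains to note that the structure produced from $\gm$ is non-trivial: the trivial twisting maps are the scalar multiples of $\id$, all of which are even, whereas $\gm$ is a nonzero \emph{odd} map, so $\gm\notin\gen{\id}$. Feeding $\vf=\gm$ into the previous step for each $q\in 2\Z$ (the case $q=0$ being included as well) then produces the asserted non-trivial Hom-Lie superalgebra structures on $\S(q)$. The one genuinely delicate step I anticipate is the sign bookkeeping in the super version of Filippov's computation: one must track the factors $(-1)^{|x||y|}$ coming from the super Jacobi identity and reconcile them with the signs $(-1)^{|\vf||x|}$ in the $\frac 12$-superderivation rule, so that the $\delta^2$-terms cancel \emph{exactly} into $-J(\vf)$. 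Once the correct graded-cyclic signs are pinned down, the remaining manipulation is routine and parallels the non-super case verbatim.
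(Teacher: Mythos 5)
Your proposal is correct and follows essentially the same route as the paper: the paper likewise derives the corollary by combining \cref{Dl^1(S(q))=<gm_dl_ve>} (which supplies the nonzero odd $\frac 12$-superderivation $\gm$ for $q\in 2\Z$) with the super-analogue of Filippov's theorem, which it merely asserts "is easy to see". Your additional sketch of that super-analogue (cyclic summation yielding $\delta(1-\delta)J(\vf)=0$) and the observation that $\gm\notin\gen{\id}$ because it is odd are consistent with, and slightly more detailed than, what the paper records.
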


\subsection{Transposed Poisson superalgebra structures on $\S(q)$}
\label{tpa-salg}
With the help of \cref{Dl^1(S(q))=<gm_dl_ve>} we are now ready to describe the transposed Poisson superalgebra structures on $(\S(q),[\cdot,\cdot])$.
\begin{thrm}\label{tp-on-S(q)}
	If $q\ne 0$, then all the transposed Poisson superalgebra structures on $(\S(q),[\cdot,\cdot])$ are trivial. If $q=0$, then the non-trivial transposed Poisson superalgebra structures $(\S(q),\cdot,[\cdot,\cdot])$ on $(\S(q),[\cdot,\cdot])$ are, up to an isomorphism, of one of the following two forms
	\begin{align}
	L_{0,0}\cdot L_{0,0}&=L_{0,0},\ L_{0,0}\cdot G_{0,0}=G_{0,0}\cdot L_{0,0}=G_{0,0},\label{L_00^2-L_00G_00}\\
	L_{0,0}\cdot L_{0,0}&=L_{0,0}.\label{L_00^2=L_00}
	\end{align}
\end{thrm}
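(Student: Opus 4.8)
My plan is to mirror the proof of \cref{thalg}, replacing \cref{Dl(B(q))-full-description} by the descriptions of the even and odd $\frac 12$-superderivations from \cref{Dl(S(q))=<id_af_bt>,Dl^1(S(q))=<gm_dl_ve>}. For $(m,i)\in\Z\times\Z$ let $\vf^{m,i}$ and $\psi^{m,i}$ be the left multiplications by $L_{m,i}$ and $G_{m,i}$ in $(\S(q),\cdot)$. By \cref{glavsuperlem}, $\vf^{m,i}\in\Dl^0(\S(q))$ and $\psi^{m,i}\in\Dl^1(\S(q))$, so each is a scalar combination of $\id,\af,\bt$ (respectively $\gm,\dl,\ve$) whose coefficients depend on $(m,i)$. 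Since $\cdot$ is recovered from these left multiplications, the task is to pin down the coefficients from supercommutativity and associativity of $\cdot$, and then to normalize by automorphisms and check the converse.

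First I would settle $q\ne 0$. Here $\Dl^0(\S(q))=\gen\id$, so $\vf^{m,i}=a^{m,i}\id$; comparing $L_{m,i}\cdot L_{n,j}$ with $L_{n,j}\cdot L_{m,i}$ for $(m,i)\ne(n,j)$ forces all $a^{m,i}=0$. On the odd side either $\Dl^1(\S(q))=\{0\}$, giving $\psi^{m,i}=0$ at once, or $\Dl^1(\S(q))=\gen\gm$ and $\psi^{m,i}=b^{m,i}\gm$; in the latter case the supercommutativity relation $G_{m,i}\cdot G_{n,j}=-G_{n,j}\cdot G_{m,i}$, combined with the fact that $\gm(G_{n,j})$ is supported at the single index $\left(0,-\frac{3q}2\right)$, forces every $b^{m,i}=0$. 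Hence $\cdot$ is trivial whenever $q\ne 0$.

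The real work is the case $q=0$. I would write $\vf^{m,i}=a^{m,i}\id+b^{m,i}\af+c^{m,i}\bt$ and $\psi^{m,i}=d^{m,i}\gm+e^{m,i}\dl+f^{m,i}\ve$, then read off the four families of products using the explicit formulas for $\af,\bt,\gm,\dl,\ve$ in \cref{alpha-and-beta-half-der-S(0),gamma-half-der-S(q),dl-and-eps-half-der-S(q)}, and impose supercommutativity on each family in turn. The symmetry of $L\cdot L$ kills every $a^{m,i}$ and every $b^{m,i}$ except $b^{0,0}$; the identity $G_{m,i}\cdot L_{n,j}=L_{n,j}\cdot G_{m,i}$ kills every $f^{m,i}$ and every $c^{m,i}$ except $c^{0,0}$, forces $e^{m,i}=0$ for $(m,i)\ne(0,0)$, and ties $e^{0,0}$ to $c^{0,0}$; and the antisymmetry of $G\cdot G$ kills every $d^{m,i}$, so the $\gm$-component plays no role. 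Thus only two scalars $b:=b^{0,0}$ and $c:=c^{0,0}$ survive, the only possibly nonzero products being $L_{0,0}\cdot L_{0,0}=bL_{0,0}$ and $L_{0,0}\cdot G_{0,0}=G_{0,0}\cdot L_{0,0}=cG_{0,0}$. Associativity of $(L_{0,0}\cdot L_{0,0})\cdot G_{0,0}=L_{0,0}\cdot(L_{0,0}\cdot G_{0,0})$ then yields $c(c-b)=0$, so either $c=0$ or $c=b$. I expect this bookkeeping --- tracking the six coefficient families over all index pairs and correctly isolating the two surviving scalars --- to be the main obstacle.

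It remains to normalize and to prove the converse. Because $L_{0,0}$ and $G_{0,0}$ are central in $\S(0)$ and never arise as a bracket (their coefficients in \cref{[L_mi_L_nj]=(n(i+q)-m(j+q))L_m+n_i+j,[L_mi_G_nj],[G_mi_G_nj]} vanish at $q=0$), the rescaling $\phi(L_{0,0})=\lambda L_{0,0}$, $\phi(G_{0,0})=\mu G_{0,0}$, identity elsewhere, is a Lie superalgebra automorphism for all $\lambda,\mu\in\C^*$. Feeding it through $x*y=\phi(\phi^{-1}(x)\cdot\phi^{-1}(y))$ replaces $(b,c)$ by $(\lambda^{-1}b,\lambda^{-1}c)$, so taking $\lambda=b$ (when $b\ne 0$) normalizes the family $c=0$ to \cref{L_00^2=L_00} and the family $c=b$ to \cref{L_00^2-L_00G_00}, the trivial case being $b=c=0$. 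These two are non-isomorphic because $\S(0)\cdot\S(0)$ is the even line $\gen{L_{0,0}}$ in the first case but the two-dimensional mixed-parity space $\gen{L_{0,0},G_{0,0}}$ in the second, and this is preserved by any even automorphism. Finally, for the converse I would verify directly that each listed product is commutative and associative and satisfies \cref{super-trans-leibniz}: in both cases $\S(0)\cdot\S(0)\sst Z(\S(0))$ makes the right-hand side vanish, while $[\S(0),\S(0)]\sst\Ann(\S(0))$ (again because $L_{0,0},G_{0,0}$ never occur as brackets) makes the left-hand side vanish, exactly as in the proof of \cref{thalg}.
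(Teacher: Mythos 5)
Your proposal is correct and follows essentially the same route as the paper: left multiplications are even/odd $\frac12$-superderivations, the classifications in \cref{Dl(S(q))=<id_af_bt>,Dl^1(S(q))=<gm_dl_ve>} plus supercommutativity reduce everything to the two scalars $b^{0,0},c^{0,0}$, associativity gives $c(c-b)=0$, a diagonal rescaling of the central elements $L_{0,0},G_{0,0}$ normalizes the constants, and the converse follows from $\S(0)\cdot\S(0)\sst Z(\S(0))$ and $[\S(0),\S(0)]\sst\Ann(\S(0))$. The coefficient bookkeeping you outline (which $a,b,c,d,e,f$ survive) matches the paper's case analysis exactly, as does the non-isomorphism argument via $\dim\S(0)^2$.
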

\begin{proof}
	Let $(\S(q),\cdot,[\cdot,\cdot])$ be a transposed Poisson superalgebra, i.e. $(\S(q),\cdot)$ is supercommutative and \cref{super-trans-leibniz} holds. Given $(m,i)\in\Z\times\Z$, denote by $\vf^{m,i}$ and $\psi^{m,i}$ the left multiplications by $L_{m,i}$ and $G_{m,i}$, respectively, in $(\S(q),\cdot)$, i.e.
	\begin{align}
	L_{m,i}\cdot L_{n,j}&=\vf^{m,i}(L_{n,j}),\ L_{m,i}\cdot G_{n,j}=\vf^{m,i}(G_{n,j}),\label{L_mi.L_nj=vf^mi(L_nj)-and-L_mi.G_nj=vf^mi(G_nj)}\\
	G_{m,i}\cdot L_{n,j}&=\psi^{m,i}(L_{n,j}),\ G_{m,i}\cdot G_{n,j}=\psi^{m,i}(G_{n,j})\label{G_mi.L_nj=psi^mi(L_nj)-and-G_mi.G_nj=psi^mi(G_nj)}
	\end{align}
	In view of supercommutativity of $(\S(q),\cdot)$, we have
	\begin{align}
	L_{m,i}\cdot L_{n,j}&=\vf^{n,j}(L_{m,i}),\ L_{m,i}\cdot G_{n,j}=\psi^{n,j}(L_{m,i}),\label{L_mi.L_nj=vf^nj(L_mi)-and-L_mi.G_nj=psi^nj(L_mi)}\\
	G_{m,i}\cdot L_{n,j}&=\vf^{n,j}(G_{m,i}),\ G_{m,i}\cdot G_{n,j}=-\psi^{n,j}(G_{m,i})\label{G_mi.L_nj=vf^nj(G_mi)-and-G_mi.G_nj=psi^nj(G_mi)}
	\end{align}
	
	By \cref{super-trans-leibniz} we have $\vf^{m,i}\in\Dl^0(\S(q))$ and $\psi^{m,i}\in\Dl^1(\S(q))$. 
	
	\textit{Case 1.} $q\not\in 2\Z$. Then $\vf^{m,i}=a^{m,i}\id$ for some $a^{m,i}\in\C$ by \cref{Dl(S(q))=<id_af_bt>} and $\psi^{m,i}=0$ by \cref{Dl^1(S(q))=<gm_dl_ve>}. It follows from \cref{L_mi.L_nj=vf^mi(L_nj)-and-L_mi.G_nj=vf^mi(G_nj),L_mi.L_nj=vf^nj(L_mi)-and-L_mi.G_nj=psi^nj(L_mi)} that $a^{m,i}=0$ for all $(m,i)\in\Z\times\Z$. So, $\cdot$ is trivial whenever $q\not\in 2\Z$.
	
	\textit{Case 2.} $q\in 2\Z\setminus\{0\}$. As above, $\vf^{m,i}=a^{m,i}\id$ by \cref{Dl(S(q))=<id_af_bt>}, so again  \cref{L_mi.L_nj=vf^mi(L_nj)-and-L_mi.G_nj=vf^mi(G_nj),L_mi.L_nj=vf^nj(L_mi)-and-L_mi.G_nj=psi^nj(L_mi)} imply $a^{m,i}=0$, whence $\vf^{m,i}=0$. So, $L_{m,i}\cdot L_{n,j}=L_{m,i}\cdot G_{n,j}=0$. However, 
	\begin{align}\label{psi^mi=a^mi.gm}
	\psi^{m,i}=b^{m,i}\gm
	\end{align}
	with $b^{m,i}\in\C$ and $\gm$ given by \cref{gm(G_mi)=0-or-L_0-q}. We immediately deduce from \cref{G_mi.L_nj=psi^mi(L_nj)-and-G_mi.G_nj=psi^mi(G_nj),gm(G_mi)=0-or-L_0-q,psi^mi=a^mi.gm} that $\psi^{m,i}(L_{n,j})=0$, so $G_{m,i}\cdot L_{n,j}=0$. Regarding $G_{m,i}\cdot G_{n,j}$, on the one hand, by \cref{G_mi.L_nj=psi^mi(L_nj)-and-G_mi.G_nj=psi^mi(G_nj),gm(G_mi)=0-or-L_0-q,psi^mi=a^mi.gm}
	\begin{align*}
	G_{m,i}\cdot G_{n,j}=b^{m,i}\gm(G_{n,j})=
	\begin{cases}
	0, & (n,j)\ne\left(0,-\frac{3q}2\right),\\
	b^{m,i}L_{0,-q}, & (n,j)=\left(0,-\frac{3q}2\right).
	\end{cases}
	\end{align*}
	On the other hand, by \cref{G_mi.L_nj=vf^nj(G_mi)-and-G_mi.G_nj=psi^nj(G_mi),gm(G_mi)=0-or-L_0-q,psi^mi=a^mi.gm}
	\begin{align*}
	G_{m,i}\cdot G_{n,j}=-b^{n,j}\gm(G_{m,i})=
	\begin{cases}
	0, & (m,i)\ne\left(0,-\frac{3q}2\right),\\
	-b^{n,j}L_{0,-q}, & (m,i)=\left(0,-\frac{3q}2\right).
	\end{cases}
	\end{align*}
	Thus, the product $G_{m,i}\cdot G_{n,j}$ is zero unless $(m,i)=(n,j)=\left(0,-\frac{3q}2\right)$. But $G_{0,-\frac{3q}2}\cdot G_{0,-\frac{3q}2}$ is also zero, because $(\S(q)_1,\cdot)$ is anticommutative. 
	
	
	
	\textit{Case 3.} $q=0$. Write $\vf^{(m,i)}=a^{m,i}\id+b^{m,i}\af+c^{m,i}\bt$ and $\psi^{(m,i)}=p^{m,i}\gm+q^{m,i}\dl+r^{m,i}\ve$ in view of \cref{Dl(S(q))=<id_af_bt>,Dl^1(S(q))=<gm_dl_ve>}. On the one hand, by \cref{L_mi.L_nj=vf^mi(L_nj)-and-L_mi.G_nj=vf^mi(G_nj),G_mi.L_nj=psi^mi(L_nj)-and-G_mi.G_nj=psi^mi(G_nj),af(L_mi)=0-or-L_0-0,bt(L_mi)=0-or-G_0-0,dl(G_mi)=0-or-G_00,ve(L_mi)=G_mi,gm(G_mi)=0-or-L_0-q}
	\begin{align*}
		L_{m,i}\cdot L_{n,j}&=a^{m,i}L_{n,j}+b^{m,i}\af(L_{n,j})+c^{m,i}\bt(L_{n,j})=
		\begin{cases}
		a^{m,i}L_{n,j}, & (n,j)\ne(0,0),\\
		\left(a^{m,i}+b^{m,i}\right)L_{0,0}, & (n,j)=(0,0).
		\end{cases}\\
		L_{m,i}\cdot G_{n,j}&=a^{m,i}G_{n,j}+b^{m,i}\af(G_{n,j})+c^{m,i}\bt(G_{n,j})=
		\begin{cases}
		a^{m,i}G_{n,j}, & (n,j)\ne(0,0),\\
		\left(a^{m,i}+c^{m,i}\right)G_{0,0}, & (n,j)=(0,0).
		\end{cases}\\
		G_{m,i}\cdot G_{n,j}&=p^{m,i}\gm(G_{n,j})+q^{m,i}\dl(G_{n,j})+r^{m,i}\ve(G_{n,j})=
		\begin{cases}
		0, & (n,j)\ne(0,0),\\
		p^{m,i}L_{0,0}, & (n,j)=(0,0).
		\end{cases}
	\end{align*}
	On the other hand, by \cref{L_mi.L_nj=vf^nj(L_mi)-and-L_mi.G_nj=psi^nj(L_mi),G_mi.L_nj=vf^nj(G_mi)-and-G_mi.G_nj=psi^nj(G_mi),af(L_mi)=0-or-L_0-0,bt(L_mi)=0-or-G_0-0,dl(G_mi)=0-or-G_00,ve(L_mi)=G_mi,gm(G_mi)=0-or-L_0-q}
	\begin{align*}
	L_{m,i}\cdot L_{n,j}&=a^{n,j}L_{m,i}+b^{n,j}\af(L_{m,i})+c^{n,j}\bt(L_{m,i})=
	\begin{cases}
	a^{n,j}L_{m,i}, & (m,i)\ne(0,0),\\
	\left(a^{n,j}+b^{n,j}\right)L_{0,0}, & (m,i)=(0,0).
	\end{cases}\\
	L_{m,i}\cdot G_{n,j}&=p^{n,j}\gm(L_{m,i})+q^{n,j}\dl(L_{m,i})+r^{n,j}\ve(L_{m,i})=
	\begin{cases}
	r^{n,j}G_{m,i}, & (m,i)\ne(0,0),\\
	\left(q^{n,j}+r^{n,j}\right)G_{0,0}, & (m,i)=(0,0).
	\end{cases}\\
	G_{m,i}\cdot G_{n,j}&=-p^{n,j}\gm(G_{m,i})-q^{n,j}\dl(G_{m,i})-r^{n,j}\ve(G_{m,i})=
	\begin{cases}
	0, & (m,i)\ne(0,0),\\
	-p^{n,j}L_{0,0}, & (m,i)=(0,0).
	\end{cases}
	\end{align*}
	
	Consider the product $L_{m,i}\cdot L_{n,j}$. If $(m,i),(n,j)\ne(0,0)$, then $a^{m,i}L_{n,j}=a^{n,j}L_{m,i}$, so taking $(m,i)\ne(n,j)$ we conclude that $a^{m,i}=a^{n,j}=0$. Thus, $L_{m,i}\cdot L_{n,j}=0$. If $(m,i)=(0,0)$, $(n,j)\ne(0,0)$, then $a^{0,0}L_{n,j}=\left(a^{n,j}+b^{n,j}\right)L_{0,0}=b^{n,j}L_{0,0}$. So, we obtain $a^{0,0}=b^{n,j}=0$, whence $L_{m,i}\cdot L_{n,j}=0$. Similarly, $(m,i)\ne(0,0)$, $(n,j)=(0,0)$ implies $L_{m,i}\cdot L_{n,j}=0$. Finally, if $(m,i)=(n,j)=(0,0)$, then $L_{m,i}\cdot L_{n,j}=\left(a^{0,0}+b^{0,0}\right)L_{0,0}=b^{0,0}L_{0,0}$, because $a^{0,0}=0$. 
	
	As to $L_{m,i}\cdot G_{n,j}$, the case $(m,i),(n,j)\ne(0,0)$ gives $L_{m,i}\cdot G_{n,j}=0$ and $r^{n,j}=0$, the case $(m,i)=(0,0)$, $(n,j)\ne(0,0)$ gives $L_{m,i}\cdot G_{n,j}=0$ and $q^{n,j}=0$, the case $(m,i)\ne(0,0)$, $(n,j)=(0,0)$ gives $L_{m,i}\cdot G_{n,j}=0$ and $r^{0,0}=c^{m,i}=0$, the case $(m,i)=(n,j)=(0,0)$ gives $L_{m,i}\cdot G_{n,j}=c^{0,0}G_{0,0}=q^{0,0}G_{0,0}$. 
	
	Regarding $G_{m,i}\cdot G_{n,j}$, we see that it is zero unless $(m,i)=(n,j)=(0,0)$, in which case it must also be zero thanks to anti-commutativity of $(\S(0)_1,\cdot)$. 
	
	Thus, the only possible non-zero products in $(\S(0),\cdot)$ are of the form 
	\begin{align*}
		L_{0,0}\cdot L_{0,0}=c_1L_{0,0},\ L_{0,0}\cdot G_{0,0}=G_{0,0}\cdot L_{0,0}=c_2G_{0,0}.
	\end{align*}
	Moreover, it follows from $L_{0,0}\cdot(L_{0,0}\cdot G_{0,0})=(L_{0,0}\cdot L_{0,0})\cdot G_{0,0}$ that $c_2^2=c_1c_2$. 
	
	Since $L_{0,0},G_{0,0}\in Z(\S(0))$, then any linear map of the form $\phi(L_{m,i})=L_{m,i}$ for $(m,i)\ne (0,0)$, $\phi(G_{n,j})=G_{n,j}$ for $(n,j)\ne (0,0)$, $\phi(L_{0,0})=k_{11}L_{0,0}+k_{12}G_{0,0}$ and $\phi(G_{0,0})=k_{21}L_{0,0}+k_{22}G_{0,0}$ with $k_{11}k_{22}\ne k_{12}k_{21}$ is an automorphism of $(\S(0),[\cdot,\cdot])$.

	Let $c_2\ne 0$. Then $c_1=c_2$, and we obtain the following multiplication table
	\begin{align}\label{L_00^2=c_1L_00_L_00G_00=c_1G_00}
	L_{0,0}\cdot L_{0,0}&=c_1L_{0,0},\ L_{0,0}\cdot G_{0,0}=G_{0,0}\cdot L_{0,0}=c_1G_{0,0}.
	\end{align}
	Applying $\phi$ with $k_{11}=k_{22}=c_1$ and $k_{12}=k_{21}=0$ to \cref{L_00^2=c_1L_00_L_00G_00=c_1G_00}, we come to a transposed Poisson structure isomorphic to \cref{L_00^2-L_00G_00}
	
	Let $c_2=0$. Then we obtain the following  multiplication table
	\begin{align}\label{L_00^2=c_1L_00}
	L_{0,0}\cdot L_{0,0}&=c_1L_{0,0}.
	\end{align}
	Applying $\phi$ with $k_{11}=c_1$, $k_{22}=1$ and $k_{12}=k_{21}=0$ to \cref{L_00^2=c_1L_00}, we come to a transposed Poisson structure isomorphic to \cref{L_00^2=L_00}.

	Conversely, each of the two associative and supercommutative multiplications \cref{L_00^2-L_00G_00,L_00^2=L_00} defines a transposed Poisson algebra structure on $\S(0)$, because $\S(0)\cdot\S(0)\sst\gen{L_{0,0},G_{0,0}}\sst Z(\S(0))$ and $[\S(0),\S(0)]\sst\Ann(\S(0))$. They are non-isomorphic because the dimensions of $S(0)^2$ are different under these two multiplications.
\end{proof}

\end{document}